%% file: Brenner_Strong_Solution_Existence.tex
\documentclass[11pt]{amsart}

\input{newmacros}

\title[   ]{Global Existence of Classical Solutions to Brenner--Navier--Stokes--Fourier System for Large Data}
\author[Eo]{Saehoon Eo}
\address[Saehoon Eo]
{ Department of Mathematics, \newline
Stanford University \\
CA 94305, USA}
\email{eosehoon@stanford.edu}

\author[Eun]{Namhyun Eun}
\address[Namhyun Eun]
{ School of Mathematics, \newline
Korea Institute for Advanced Study (KIAS) \\
Seoul 02455, Republic of Korea}
\email{namhyuneun@kias.re.kr}

\author[Kang]{Moon-Jin Kang}
\address[Moon-Jin Kang]
{ Department of Mathematical Sciences, \newline
Korea Advanced Institute of
Science and Technology \\
Daejeon 34141, Republic of Korea}
\email{moonjinkang@kaist.ac.kr}
\newtheorem{theorem}{Theorem}[section]

\theoremstyle{remark}
\newtheorem{remark}{Remark}[section]

\theoremstyle{plain}

\newtheorem{theo}{Theorem}[section]
\newtheorem{prop}[theo]{Proposition}
\newtheorem{cor}[theo]{Corollary}
\newtheorem{lem}[theo]{Lemma}

\numberwithin{equation}{section}

\begin{document}
\bibliographystyle{plain}

\date{\today}

\subjclass{76N10, 35Q30, 35Q35, 35A09}

\keywords{Brenner--Navier--Stokes--Fourier system, Compressible fluids, Global existence, Classical solution, Parabolic De Giorgi method, Large data}

\thanks{\textbf{Acknowledgment.}
The authors thank Professor Alexis F. Vasseur for his valuable comments.
This work was supported by Samsung Science and Technology Foundation under Project Number SSTF-BA2102-01.}

\begin{abstract}
We study the 1D Brenner--Navier--Stokes--Fourier (BNSF) system, proposed as a refinement of the classical Navier--Stokes--Fourier model through the introduction of the volume velocity, distinct from the mass velocity describing convective transport.
When formulated in the Lagrangian mass coordinates with the volume velocity, the discrepancy between the two velocities induces a dissipative structure in the mass conservation law.

We prove the global existence of classical solutions for arbitrarily large initial data.
More precisely, for initial data in $H^k(\mathbb{R})$ with $k\ge3$, with the specific volume and absolute temperature initially bounded away from zero, we construct global-in-time solutions that remain in the same regularity class.
Our result accommodates arbitrarily large initial data.
A major difficulty is to establish lower and upper bounds for the specific volume \(v\).
The additional dissipation yields an $L_t^2 L_x^2$ bound for $v_x$, which is further improved to an $L_t^\infty L_x^\infty$ bound of $v$ and $1/v$ via the parabolic De Giorgi method.
We also apply the maximum principle to obtain a positive lower bound for the absolute temperature.

\end{abstract}
\maketitle \centerline{\date}

\tableofcontents

\section{Introduction}
This paper examines the one-dimensional Brenner--Navier--Stokes--Fourier (BNSF) system, which describes the motion of the viscous and heat-conducting fluids.
In the Lagrangian mass coordinates, the system is written as
\begin{equation}\label{bnsf0}
\begin{cases}
v_t - (u_m)_x = 0, \\
(u_v)_t + p(v, \th)_x = \m \big(\frac{(u_v)_x}{v}\big)_x, \\
E_t + (p u_v)_x = \k \big(\frac{\th_x}{v}\big)_x + \m\big(\frac{u_v(u_v)_x}{v}\big)_x,
\end{cases}
\end{equation}
where \(v,u_m,u_v,\th,p(v,\th),E\) respectively represent the specific volume, mass velocity, volume velocity, absolute temperature, pressure law, and the total energy.
The notion of the mass and volume velocity will be introduced shortly.
We consider the ideal polytropic gas, where the pressure law and the total energy are given by
\begin{equation}\label{pressure}
p(v,\th) = \frac{R\th}{v}, \qquad
E=\frac{R\th}{\g-1}+\frac{(u_v)^2}{2},
\end{equation}
where \(R>0\) denotes the gas constant and \(\g>1\) the adiabatic constant.
Moreover, \(\m\) and \(\k\) are positive constants representing the viscosity and heat conductivity coefficients, respectively.

\vspace{2mm}
The BNSF system was first introduced in a series of papers \cite{Brenner-1,Brenner-2,Brenner-3} to address certain limitations of the classical Navier--Stokes--Fourier (NSF) system, which fails to capture the behavior of fluids under extreme conditions such as rarefied gases, shock waves, and gaseous flows through micro-channel.
Numerous studies have pointed out shortcomings of the NSF equations and suggested various modifications (see for instance \cite{ABS01,CD07,DRM08,DDC10,DSD09,GR07,HHBZ95,RGL03}), and the BNSF system offers one of the most systematic and well-developed frameworks in the literature.

\vspace{2mm}
Brenner's framework is rooted in the bi-velocity theory, which postulates the presence of two fundamentally distinct velocity fields in fluid description: the mass velocity \(u_m\), coinciding with the conventional fluid velocity in classical formulations, and the volume velocity \(u_v\).
In \cite{Brenner-3}, Brenner argued that the prevailing body of continuum fluid literature tacitly identifies these two velocities, an assumption that has largely escaped critical scrutiny.
Contrary to this convention, Brenner emphasized that the two velocities \(u_m\) and \(u_v\) are generally non-equivalent, with their inconsistency becoming increasingly pronounced in regimes characterized by large density (or specific volume) gradients.
More precisely, in the Lagrangian mass coordinates, the constitutive relation linking \(u_m\) and \(u_v\), proposed by Brenner \cite{Brenner-3}, is given by
\begin{equation}\label{umuv}
u_m = u_v + \frac{\k}{c_p}\left(\frac{v_x}{v}\right),
\end{equation}
where \(c_p\) denotes the specific heat at constant pressure.
This expression indicates that the discrepancy between the two velocity fields scales with \(v_x\), implying that the notion of the volume velocity becomes negligible in incompressible flows with constant density.

\vspace{2mm}
In a series of papers \cite{Brenner-1,Brenner-2,Brenner-3}, Brenner proposed the constitutive relation \eqref{umuv} on the basis of \"Otinger's theoretical framework and substantiated it through multiple lines of reasoning, including its consistency with Burnett's solutions of the Boltzmann equation and agreement with experimental observations.
As asserted in \cite{Brenner-3}, the relation \eqref{umuv} is also compatible with the principles of linear irreversible thermodynamics.
Notably, for flows exhibiting nontrivial density gradients, Brenner argued that the no-slip boundary condition should be formulated in terms of the volume velocity, which yields improved concordance with empirical data.

\vspace{2mm}
The introduction of volume velocity may be motivated from a heuristic standpoint.
When a gas undergoes deformation under internal pressure, mechanical work is performed, the quantification of which necessitates a notion of displacement.
However, pressure is neither generated by nor acts upon isolated particles, but rather on ensembles of particles whose boundaries are inherently diffuse and ill-defined.
This renders a rigorous definition of displacement at the microscopic level problematic.
Such ambiguity reflects the intrinsically statistical nature of volume, which cannot be attributed to individual particles but instead pertains to particle aggregates.
From this perspective, the concept of volume velocity emerges as a necessary kinematic descriptor for the motion of such aggregates, complementing the classical velocity notion.
In essence, while the mass velocity \(u_m\) governs mass transport and convection, the volume velocity \(u_v\) is introduced to account for momentum transfer, energy transport, mechanical work, and viscous effects.

\vspace{2mm}
Despite the comprehensive formulation of Brenner's theory, rigorous mathematical investigations of the associated BNSF system remain limited.
Feireisl and Vasseur \cite{FV-Brenner10} showed the global existence of weak solutions for the multidimensional initial-boundary value problem.
Moreover, the BNSF framework has been employed in \cite{BF-Brenner18} to construct measure-valued solutions to the Euler system, as well as in \cite{FLM-Brenner20} in the development of finite volume schemes for the Euler equations.
More recently, Eo--Eun--Kang--Oh \cite{EEKO-JDE25} established the existence of traveling wave solutions---commonly referred to as viscous shocks---for the one-dimensional BNSF system, together with several quantitative properties of these solutions.
This was subsequently extended in \cite{EE-CMS26} to the case of general \(\Ccal^2\) temperature-dependent transport coefficients.
Furthermore, the nonlinear stability of such viscous shocks under arbitrarily large perturbations was demonstrated in \cite{EEK-BNSF}.
Notably, the absence of smallness assumptions on the perturbations enables the derivation of uniform estimates, which in turn provide a rigorous justification of the vanishing dissipation limit toward the Euler system.
Motivated by the aforementioned developments, the main objective of the current paper is to establish the global existence of classical solutions to the BNSF system.

\vspace{2mm}
By eliminating the mass velocity via the constitutive relation \eqref{umuv}, the BNSF system \eqref{bnsf0} can be reformulated as follows:
\begin{equation} \label{bnsfsys}
\begin{cases}
v_t - u_x = \t\big(\frac{v_x}{v}\big)_x, \\
u_t + p(v, \th)_x = \m\big(\frac{u_x}{v}\big)_x, \\
\frac{R}{\g-1}\th_t + p(v, \th)u_x = \k\big(\frac{\th_x}{v}\big)_x + \m\frac{(u_x)^2}{v},
\end{cases}
\end{equation}
where \(u\) now represent the volume velocity, and \(\t\) denotes the Brenner coefficient defined by \(\k/c_p\).
We remark that the distinction between the two velocities yields a dissipative structure in the mass conservation law, in contrast to the classical NSF system, which takes the following form:
\begin{equation} \label{nsf}
\begin{cases}
v_t - u_x = 0, \\
u_t + p(v, \th)_x = \m\big(\frac{u_x}{v}\big)_x, \\
\frac{R}{\g-1}\th_t + p(v, \th)u_x = \k\big(\frac{\th_x}{v}\big)_x + \m\frac{(u_x)^2}{v}.
\end{cases}
\end{equation}

\subsection{Main Result}
In this paper, we establish the global existence of classical solutions to the BNSF system.
The main result is stated as follows.

\begin{theorem}[Global Existence]\label{thm:main}
Let \(R,\g,\t,\m,\k\) be any positive constants with \(\g>1\) and \(k\ge 3\) be an integer.
Then, for an initial data \((v_0,u_0,\th_0)\) satisfying
\[
\norm{v_0-1}_{H^k(\RR)} + \norm{u_0}_{H^k(\RR)} + \norm{\th_0-1}_{H^k(\RR)} < \infty
\]
and
\[
\inf_{x \in \RR} (v_0(x), \th_0(x)) > 0,
\]
there exists a unique classical solution \((v, u, \th)\) to the system \eqref{bnsfsys} with \eqref{pressure} on \(\RR^+ \times \RR\) such that for every \(T>0\),
\[
(v-1, u, \th-1)\in \big(C([0,T]; H^k(\RR)) \cap L^2(0, T; H^{k+1}(\RR))\big)^3.
\]
Moreover, for every \(T>0\), there exist constants \(\underline{\ell}(T)\) and \(\overline{\ell}(T)\) such that 
\[
0 < \underline{\ell}(T) \le v(t,x), \th(t,x) \le \overline{\ell}(T), \qquad \forall (t,x)\in(0,T)\times\RR.
\]
\end{theorem}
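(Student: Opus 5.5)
The plan is the standard continuation argument: local existence plus a priori estimates. The system \eqref{bnsfsys} is fully parabolic in $(v,u,\th)$ as long as $v$ and $\th$ stay bounded away from zero and infinity. A contraction-mapping argument in $X_T=C([0,T];H^k)\cap L^2(0,T;H^{k+1})$ therefore gives a unique local solution on $[0,T_0]$. Here $T_0$ depends only on $\norm{(v_0-1,u_0,\th_0-1)}_{H^k}$ and on $\inf v_0$, $\inf\th_0$, $\sup v_0$, $\sup\th_0$. It then suffices to show, for each fixed $T$, that any solution in $X_T$ obeys bounds $\underline\ell(T)\le v,\th\le\overline\ell(T)$ and $\sup_{[0,T]}\norm{\cdot}_{H^k}\le C(T)$ with constants independent of the solution. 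Iterating the local result then extends the solution to all of $[0,T]$. Uniqueness follows from an $L^2$ energy estimate for the difference of two solutions.

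\textbf{Step 1: basic energy and entropy estimates.} I use the relative entropy
\[
\eta=\tfrac{u^2}{2}+R(v-1-\ln v)+\tfrac{R}{\g-1}(\th-1-\ln\th).
\]
Multiply the equations by $(p-1)\sim R(1-1/v)$ type multipliers and by $1-1/\th$. This should give
\[
\sup_t\int\eta\,dx+\int_0^T\!\!\int\Big(\frac{\m u_x^2}{v\th}+\frac{\k\th_x^2}{v\th^2}+\t R\frac{\th v_x^2}{v^3}\Big)\le C.
\]
The extra term comes from the Brenner dissipation: it contributes $-\t R\int(\th/v)(v_x/v)_x\cdot(\ldots)$. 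The cross term involving $\th_x v_x$ must be absorbed using $\k$-dissipation; I expect to use the relation $\t=\k/c_p=\k(\g-1)/(R\g)$ for a positive-definite quadratic form. This yields $v_x\in L^2_tL^2_x$ with weight $\th/v^3$.

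\textbf{Step 2: bounds on $v$ and $1/v$ (main obstacle).} Write the mass equation as
\[
v_t-\t(\ln v)_{xx}=u_x .
\]
This is a parabolic equation for $w=\ln v$, of the form $v\,w_t-\t w_{xx}=u_x$. The forcing $u_x$ is in $L^2_t L^2_x$ only after weights, and $u$ itself is in $L^\infty_tL^2_x$. I plan to apply a parabolic De Giorgi iteration to $(w-k)_+$ and to $(-w-k)_+$, with the source written in divergence form $\partial_x u$. The key inputs are the energy bound, giving $L^\infty_tL^1$-type control of $w$ via $v-1-\ln v$, and an $L^2$-level bound on $u$. The difficulty is the degenerate weights: $\th$ may be small or large before we have bounds on it. I would first bound $\int_0^T\sup_x\th\,dt$ via $\th\le C+C\|\th_x/\th\|_{L^2}\ldots$-type interpolation. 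I would then run De Giorgi on truncations at levels $k$ with energy inequalities having time-integrable coefficients, obtaining $\overline\ell,\underline\ell$ for $v$ depending on $T$. An alternative fallback is a Kazhikhov-type representation formula, combining the $u$ and $v$ equations through $\m(\ln v)_t$, but the presence of $\t$ makes the pure De Giorgi approach more natural.

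\textbf{Step 3: temperature bounds and higher regularity.} With $v$ bounded, the $\th$ equation is uniformly parabolic with a nonnegative source $\m u_x^2/v$ and the term $-pu_x$. Write it for $1/\th$, or use $\th_t\ge\ldots-\frac{(\g-1)R^2}{4\m v}\th^2$ after completing the square. The maximum principle, applied to a comparison ODE, then gives $\th\ge\underline\ell(T)>0$. Next I run energy estimates for $v_x,u_x,\th_x$ in $L^\infty_tL^2_x\cap L^2_tH^1$, which gives an upper bound for $\th$ via $\|\th-1\|_\infty^2\le\|\th-1\|\|\th_x\|$. Finally, induction on derivatives up to order $k$ uses the parabolic structure and Gagliardo--Nirenberg to close the $H^k$ bounds with Gronwall constants depending on $T$. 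This completes the continuation argument.
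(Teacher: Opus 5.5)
Your overall architecture matches the paper's: continuation, relative entropy, De Giorgi on the mass equation for $v$ and $1/v$, the maximum principle for $1/\theta$, then parabolic energy estimates. However, Step 1 is miscomputed, Step 2 is ordered backwards, and Step 3 omits an essential intermediate step.

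In Step 1, differentiating your $\eta$ multiplies the mass equation by $\partial_v\eta=R(1-1/v)$. It does not multiply it by the pressure-type factor $\bar p-p=R-R\theta/v$, which is what produces your $\theta$-weight and your $\theta_xv_x$ cross term. With the correct multiplier, the pressure terms combine into a flux and the Brenner term gives exactly $-R\tau\int v_x^2/v^3\,dx$. There is no $\theta$-weight, no cross term, and no need for $\tau=\kappa/c_p$. This matters because the theorem is stated for arbitrary positive $\tau,\kappa$, so an argument using that relation would prove only a special case.

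In Step 2, note that the mass equation contains no $\theta$. The De Giorgi iterations need only $\sup_t\|u\|_{L^2}$ and what $\int\Phi(v)\,dx$ provides: a point where the truncation vanishes, $\int v\mathbf{1}_{\{v\ge2\}}dx\le C$, and $|\{v\le N^{-2}\}|\le C\mathcal{E}_0/\Phi(N^{-2})$. The paper truncates $v$ itself for the upper bound and $v^{-1/2}$ for the lower bound, with a preliminary absorption step based on the last inequality. Conversely, the bound on $\int_0^T\|\theta\|_{L^\infty}dt$ cannot come first. The interpolation $\|\sqrt\theta\|_{L^\infty}\le\sqrt2+\big(\int\frac{\theta_x^2}{v\theta^2}dx\big)^{1/2}\big(\int v\theta\,\mathbf{1}_{\{\theta\ge2\}}dx\big)^{1/2}$ needs $\|v\|_{L^\infty}$, so it belongs after De Giorgi.

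The genuine gap is Step 3. Once $v,1/v,1/\theta$ are bounded, the only derivative control is the entropy dissipation $\int\!\!\int u_x^2/(v\theta)$ and $\int\!\!\int\theta_x^2/(v\theta^2)$. Both degenerate where $\theta$ is large, and $\theta$ itself is controlled only in an $L^1$ sense for large values. The $H^1$ energy identities contain forcing terms such as $\int\theta^2v_x^2$, $\int\theta_x^2$, $\int\theta^2u_x^2$, $\int u_x^2v_x^2$ and $\int u_x^4$, which these bounds do not control. Closing them by interpolation alone gives superlinear differential inequalities, hence only local-in-time bounds; a direct $v_x$ estimate even produces the bad term $\frac{2\tau}{3}\int v_x^4/v^3$. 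Likewise, your upper bound via $\|\theta-1\|_{L^\infty}^2\lesssim\|\theta-1\|_{L^2}\|\theta_x\|_{L^2}$ presupposes $\theta-1\in L^\infty_tL^2_x$, which the entropy does not give.

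What is missing is the intermediate hierarchy:
\begin{enumerate}
\item[(i)] $\theta\in L^1_tL^\infty_x$, via the interpolation above, which is now legitimate.
\item[(ii)] The Kazhikhov--Shelukhin functional $\int\big(\frac12(E-\frac{R}{\gamma-1})^2+\frac M4u^4\big)dx$ with $E=\frac{R}{\gamma-1}\theta+\frac{u^2}{2}$. The conservative law $E_t=\big(-pu+\kappa\frac{\theta_x}{v}+\mu\frac{uu_x}{v}\big)_x$ gives dissipation $c\int E_x^2/v$ up to a $(uu_x)^2$ term, which the $u^4$ part absorbs for $M$ large. The forcing is $\int u^2\theta^2/v\lesssim(1+\|\theta\|_{L^\infty})(\text{functional}+1)$, so Gronwall with (i) yields $\theta-1\in L^\infty_tL^2_x$, $u\in L^\infty_tL^4_x$, and $\theta_x,\,uu_x\in L^2_{t,x}$.
\item[(iii)] The plain $L^2$ estimate for $u$ then gives unweighted $u_x\in L^2_{t,x}$.
\item[(iv)] The Brenner identity $(v_x/v)_t=(v_t/v)_x$ gives $\frac{d}{dt}\frac12\int(v_x/v)^2dx+\frac{\tau}{2}\int\frac1v\big((v_x/v)_x\big)^2dx\le\frac{1}{2\tau}\int\frac{u_x^2}{v}dx$. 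Hence $v_x\in L^\infty_tL^2_x\cap L^4_tL^\infty_x$ and $v_{xx}\in L^2_{t,x}$.
\end{enumerate}
Only then do the $H^1$ estimates for $u_x$ and $\theta_x$ close linearly, with time-integrable coefficients such as $\|v_x\|_{L^\infty}^2$ and $\|\theta\|_{L^\infty}^2$. This gives $\sup_t\|\theta\|_{L^\infty}$, and the $H^k$ induction follows.
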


We provide a remark regarding the main theorem.
\begin{remark}
(1) No restriction is imposed on the size of the initial data.
In particular, global classical solutions exist for arbitrarily large initial data, provided that the initial specific volume and temperature are uniformly positive.
(2) If the specific volume and the absolute temperature are initially bounded away from zero, then they remain strictly positive for all later times.
Establishing this non-vanishing property constitutes the main difficulty of the analysis: the lower (and upper) bound for the specific volume is obtained via the De Giorgi method, while that for the temperature follows from the maximum principle.
\end{remark}

In what follows, we review the existing literature on the existence theory for the classical system.
We begin with the barotropic Navier--Stokes (NS) equations, which possess a simpler yet still fundamental structure.
Prior to the 2000s, a number of seminal works established the existence of weak solutions for a broad class of initial data, including discontinuities and vacuum; see \cite{Hoff-87,Hoff-98,Serre-86,Shelukhin-83,Shelukhin-84}.
A major development in the 2000s was the discovery of the Bresch--Desjardins (BD) entropy structure \cite{BD-CMP03,BD-JMPA06,BDL-CPDE03}, which has since played a pivotal role in the analysis of compressible fluid models.
Based on this structure, Mellet--Vasseur \cite{MV-SIMA09} proved the global existence of strong solutions to the isentropic NS system.
A key ingredient in this work---and likewise in the present study---is the derivation of uniform upper and lower bounds for the density \(\r=1/v\) (or equivalently, the specific volume).
Their approach can be summarized as follows.
An energy estimate based on the relative entropy yields an \(L^\infty_t L^2_x\) bound for \(\sqrt{\r} u\).
In addition, exploiting the BD entropy structure provides a further estimate which, somewhat remarkably, controls an \(L^\infty_t L^2_x\) bound for \(\sqrt{\r} u + \r_x/\r^2\).
As a consequence, one obtains an \(L^\infty_t L^2_x\) bound for \(\r_x/\r^2\), which in turn enables to the application of Sobolev embedding to deduce uniform upper and lower bounds for the density.
This BD entropy approach has been extended to the isothermal NS system \cite{EEKO-ISO}, and further to the Navier--Stokes--Korteweg system \cite{EKK-NSK}.
Despite its strength and versatility, the BD framework has seen comparatively limited direct application to the full NSF system.
In the context of the BNSF system, although the constitutive relation \eqref{umuv} resembles the BD structure, the above approach cannot be applied as follows.
While the BD entropy yields an \(L^\infty_t L^2_x\) bound for the density gradients in the barotropic setting, Brenner's model leads only to an \(L^2_t L^2_x\) bound for the specific volume gradients, which is substantially weaker and insufficient to use Sobolev embedding to get lower and upper bounds for the specific volume.

\vspace{2mm}
On the other hand, the NSF system, which exhibits a more intricate structure, has also been extensively studied by many authors.
In the pioneering work of Kazhikhov--Shelukhin \cite{KS-77}, the initial-boundary problem for the NSF system was investigated, where the existence of both weak and classical solutions was established for large initial data.
A corresponding result in the whole space \(\RR\), as considered in the present work, was subsequently obtained in \cite{Kazhikhov-82}.
As emphasized in \cite{AKM-90,Dafermos-SIMA82,TYZZ-SIMA13}, similarly to the aforementioned barotropic case, a key analytical difficulty lies in deriving uniform upper and lower bounds for the specific volume \(v\), as well as for the absolute temperature \(\th\).

\vspace{2mm}
Broadly speaking, two main approaches have been developed in the literature to address the existence theory for the NSF system.
The first approach, originating from \cite{KS-77}, is based on deriving a subtle representation formula for the specific volume as follows: for each \(n\in\mathbb{Z}\) and any \(x\in[n,n+1]\), 
\begin{equation} \label{rep}
v(t,x)=\frac{1+\frac{R}{\m}\int_0^t \th(s,x)B_n(s,x)Y_n(s)ds}{B_n(t,x)Y_n(t)}
\end{equation}
for some carefully chosen \(B_n(t,x)\) and \(Y_n(t)\).
This representation allows one to establish a positive lower bound for \(v\), from which, together with a standard maximum principle, a positive lower bound for \(\th\) follows.

\vspace{2mm}
The second approach, developed in \cite{TYZZ-SIMA13}, is motivated by the physical observation that the transport coefficients---namely, the viscosity and the heat conductivity---typically depend on the temperature.
In particular, the Chapman--Enskog expansion (see \cite{CC-Book90}) suggests that constitutive relations of the form 
\[
\m,\,\k\, \sim \,\th^\b
\]
for some \(\b\in[1/2,\infty)\) depending on the intermolecular potential.
Since the first approach is not well suited to handling degenerate transport coefficients (for instance, non-constant viscosity), considerable effort has been devoted to overcoming this limitation; see \cite{TYZZ-SIMA13,LYZZ-SIMA14}.
These works fall within the framework of Nishida--Smoller type results (see the original work \cite{NS-CPAM73} and subsequent advancements by Liu \cite{Liu-Indiana77} and Temple \cite{Temple-JDE81}), where the size of the admissible initial data depends on the smallness of \(\g-1\).
A key observation underlying this method is that when the viscosity depends on the specific volume, one can find that 
\[
\left(\frac{\m(v)v_x}{v}\right)_t
=\left(\frac{\m(v)v_t}{v}\right)_x
=\left(\frac{\m(v)u_x}{v}\right)_x
=u_t + p_x,
\]
and when the viscosity depends on the temperature, one can deduce 
\[
\left(\frac{\m(\th)v_x}{v}\right)_t
=u_t + p_x
+\frac{\m'(\th)}{v}(\th_t v_x-u_x\th_x).
\]
However, for the BNSF system, the continuity equation becomes nonlinear, and thus the effective interchangeability between \(v_t\) and \(u_x\) is no longer available.
As a result, despite the dissipative structure of the continuity equation, these observations do not directly apply in this setting.
The concise linear structure of the mass conservation law is also crucial in the derivation of the representation \eqref{rep}, and therefore the first approach is not applicable for the BNSF model as well.

\vspace{2mm}
To obtain uniform upper and lower bounds for the specific volume, it is essential to exploit the dissipative structure of the mass conservation law.
To this end, we apply the parabolic De Giorgi method developed in \cite{CV-Annals10} (see also \cite{Vasseur-LN}).
It was originally introduced for elliptic equations to obtain \(C^\a\) bounds \cite{DeGiorgi-1957} and was was extended to the parabolic setting in \cite{CV-Annals10}.
This enables us to upgrade an \(L^2_t L^2_x\) bound on \(v_x\) to \(L^\infty_t L^\infty_x\) bounds for both \(v\) and \(1/v\).
We also refer to \cite{CKV-JMPA20} for further applications of the parabolic De Giorgi method.

\vspace{2mm}
The remainder of the paper is organized as follows.
Section \ref{sec:2} presents two main propositions from which the main theorem, i.e., the global existence is derived.
In Section \ref{sec:EST}, we prove one of the main proposition, namely the \textit{a priori} estimates; in particular, we employ the De Giorgi method for parabolic equations in order to obtain upper and lower bounds for the \(v\) variable.
Appendix \ref{appendix:LWP} is devoted to the proof of the local well-posedness.

\section{Proof of Theorem \ref{thm:main}} \label{sec:2}
\setcounter{equation}{0}
In this section, we present two main propositions: one on the local-in-time existence and the other on \textit{a priori} uniform estimates, and show how they imply the main theorem.
Note that the former follows from a standard argument, while the latter relies on a combination of refined analytical techniques, including the De Giorgi method and delicate energy estimates, and constitutes the main contribution of this paper.

\subsection{Main Propositions}
To state the propositions, we introduce the following notation for the size of the initial data.
Let \(k\in\mathbb{N}\) be given and assume that the initial data \((v_0,u_0,\th_0)\) satisfies \((v_0-1,u_0,\th_0-1)\in (H^k(\RR))^3\).
Then, for each integer \(l\) with \(0\le l \le k\), we define
\[
\norm{v_0, u_0, \th_0}_l = \norm{v_0-1}_{H^l(\RR)} + \norm{u_0}_{H^l(\RR)} + \norm{\th_0-1}_{H^l(\RR)}.
\]
Moreover, \(M(\cdot,\ldots,\cdot)\) denotes a positive, finite function that is non-decreasing in each of its arguments, while \(C\) denotes a positive constant depending only on \(R,\g,\t,\m\) and \(\k\).

\vspace{2mm}
We first present the local-in-time existence.

\begin{prop}[Local Well-Posedness]\label{prop:LWP}
Let \(\t,\m,\k\) be any positive constants and \(k\ge 1\) be an integer.
For a given initial data \(U_0\coloneqq (v_0,u_0,\th_0)\), we assume that 
\[
\norm{U_0}_k \coloneqq \norm{v_0, u_0, \th_0}_k < \infty, \qquad
\ell_0 \coloneqq \min \{\inf_{x\in\RR} v_0(x), \inf_{x\in\RR}\th_0(x)\} > 0.
\]
Then, there exist \(T_0>0\) which is bounded below by a positive constant depending only on \(\norm{U_0}_k\) and \(\ell_0\), and a constant \(M_0 > 0\) independent of \(\norm{U_0}_k\) and \(\ell_0\) such that the following holds: the system \eqref{bnsfsys} has a unique strong solution \(U = (v,u,\th)\) on \([0,T_0]\) subject to the initial data \(U_0\) which satisfies the following: 
\[
(v-1,u,\th-1) \in \big(C([0,T_0];H^k(\RR)) \cap L^2(0,T_0;H^{k+1}(\RR))\big)^3,
\]
together with the size control 
\[
\norm{U}_k(t) \le M_0\norm{U_0}_k \qquad \forall t \in [0,T_0],
\]
\[
\int_0^{T_0}\int_\RR (\rd_x^{k+1}v)^2 + (\rd_x^{k+1}u)^2 + (\rd_x^{k+1}\th)^2 dx dt \le M_0\norm{U_0}_k^2,
\]
and the lower bounds on \(v\) and \(\th\) variables
\[
v(t,x), \th(t,x) \ge \frac{\ell_0}{2} \qquad \forall (t,x)\in [0,T_0] \times\RR,
\]
where \(\norm{U}_k(t)=\norm{v,u,\th}_k(t)\) are defined as follows:
\[
\norm{U}_k(t)
=\norm{v,u,\th}_k(t)
=\norm{v(t)-1}_{H^k(\RR)} + \norm{u(t)}_{H^k(\RR)} + \norm{\th(t)-1}_{H^k(\RR)}.
\]
\end{prop}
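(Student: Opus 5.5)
The system \eqref{bnsfsys} is, under the lower bounds $v,\th>0$, a uniformly parabolic quasilinear system in all three unknowns. The diffusion matrix is diagonal with entries $\t/v$, $\m/v$ and $\k(\g-1)/(Rv)$. I therefore plan to prove Proposition \ref{prop:LWP} by a standard iteration (Picard/contraction) scheme on a linearized parabolic problem, closed by energy estimates in $H^k$.

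\textbf{Linearized problem and uniform bounds.} Fix $(\bar v,\bar u,\bar\th)$ in the class
\[
X_{T}=\Big\{(\bar v-1,\bar u,\bar\th-1)\in C([0,T];H^k)\cap L^2(0,T;H^{k+1}) : \sup_t\norm{\bar U}_k\le M_0\norm{U_0}_k,\ \bar v,\bar\th\ge \ell_0/2\Big\}.
\]
Solve the linear system in which the coefficients $1/\bar v$ and the lower-order nonlinear terms are frozen at the bar variables, namely
\[
v_t-\t\big(\tfrac{v_x}{\bar v}\big)_x=\bar u_x, \qquad u_t-\m\big(\tfrac{u_x}{\bar v}\big)_x=-p(\bar v,\bar\th)_x,
\]
and similarly for $\th$ with right-hand side $\frac{\g-1}{R}\big(-p(\bar v,\bar\th)\bar u_x+\m\bar u_x^2/\bar v\big)$. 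Each of these is a linear uniformly parabolic equation with $C_tH^k$ coefficients, so it is classically solvable in $C_tH^k\cap L^2_tH^{k+1}$ (Galerkin, or the standard theory).

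Next, apply $\rd_x^l$ for $0\le l\le k$, multiply by $\rd_x^l(v-1)$ and the analogues, and integrate. The principal term gives dissipation $\ge c\,\ell_0$-dependent$\cdot\norm{\rd_x^{l+1}\cdot}^2$, using the upper bound on $\bar v$ from $H^1\subset L^\infty$. Commutators are handled by Moser-type product estimates, with $k\ge1$ giving $H^k$ an algebra property after one derivative is moved onto the test function. Forcing terms are bounded by $C(\norm{\bar U}_k,\ell_0)$ times $L^2_tH^{k+1}$ norms, absorbed with Young's inequality. This yields
\[
\sup_{[0,T]}\norm{U}_k^2+\int_0^T\norm{\rd_x^{k+1}U}^2\le C_1\norm{U_0}_k^2+T\,C(\norm{U_0}_k,\ell_0),
\]
where $C_1$ depends only on the equation constants. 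Taking $M_0^2\ge 2C_1$ and $T$ small makes the map send $X_T$ into itself. The lower bound $v,\th\ge \ell_0/2$ follows from $\norm{v(t)-v_0}_{L^\infty}\le C\norm{v(t)-v_0}_{H^1}\le C\,T^{1/2}\cdot(\dots)$, since $v_t\in L^2_tL^2$ is controlled via the equation.

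\textbf{Contraction, uniqueness and continuity.} Estimate the difference of two images in the lower norm $C_tL^2\cap L^2_tH^1$. The difference satisfies a linear parabolic equation whose forcing is bounded by the difference of the inputs times constants depending on the $X_T$ bounds, so for small $T$ the map is a contraction in that norm. Boundedness in $X_T$ and weak compactness then give a fixed point in the full regularity class. Time continuity in $H^k$ comes from the standard $L^2_tH^{k+1}$, $\rd_tU\in L^2_tH^{k-1}$ interpolation argument. The same $L^2$ difference estimate gives uniqueness.

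\textbf{Main obstacle.} I expect the main difficulty to be getting $M_0$ independent of $\norm{U_0}_k$ and $\ell_0$. Only the $C_1$ coming from the pure initial-energy term may enter $M_0$; everything depending on the data must be relegated to $T_0$ via smallness. The delicate points are keeping the dissipation constant (which depends on $\inf 1/\bar v$, i.e. on $\sup\bar v$) out of the $\norm{U_0}_k^2$ coefficient, and bounding the top-order term $\rd_x^{k}(1/\bar v)\rd_x v$ when $k=1$. I would try writing the principal part in divergence form, so that the leading term contributes a nonnegative dissipation without a constant in front of the initial data. All commutator terms would then carry a factor controlled by $T^{1/2}$ or by an absorbable multiple of the dissipation.
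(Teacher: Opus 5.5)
Your plan follows the paper's appendix essentially step for step. It uses the same linear scheme with $1/\bar v$ and the lower-order terms frozen at the previous iterate, and the same $H^k$ energy estimates with Kato--Ponce commutators and one derivative moved onto the test function. As in the paper, smallness of $T_0$ propagates the bounds, the iterates contract in $L^\infty_tL^2\cap L^2_tH^1$, weak compactness recovers $L^\infty_tH^k\cap L^2_tH^{k+1}$, and uniqueness comes from the same difference estimate. Your remark that $C_tH^k$ continuity follows from $\partial_tU\in L^2_tH^{k-1}$ even supplies a step the paper leaves implicit.

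\textbf{Positivity when $k=1$.} This step fails as written. For $k=1$, $v_t$ is only in $L^2_tL^2$. That gives $\|v(t)-v_0\|_{L^2}\lesssim T^{1/2}$, but only boundedness of $\|v(t)-v_0\|_{H^1}$: high frequencies can lose $O(1)$ of their $\dot H^1$ norm in time $O(\xi^{-2})$. So the bound $\|v(t)-v_0\|_{H^1}\lesssim T^{1/2}$ is not available. Use instead $\|f\|_{L^\infty}^2\le C\|f\|_{L^2}\|f_x\|_{L^2}$ to get $\|v(t)-v_0\|_{L^\infty}\lesssim T^{1/4}$, and likewise for $\theta$. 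This is what the paper does; it reaches the $L^2$ smallness through an $H^{-1}$ bound on $v_t$ and $\theta_t$.

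\textbf{The obstacle you single out.} The divergence-form energy identity does give $\sup_t\|U\|_k^2$ with a constant close to $1$ in front of $\|U_0\|_k^2$ once $T$ is small. It gives the same for the weighted dissipation $\int_0^T\!\int(\partial_x^{k+1}v)^2/\bar v$. Passing to the unweighted dissipation, however, costs a factor $\sup\bar v\lesssim 1+\|U_0\|_k$, and no treatment of the commutators removes it. To see this, place a small high-frequency oscillation on a plateau where $v_0\approx a$. It evolves essentially by $v_t=\tau v_{xx}/a$, so it dissipates about $\frac{a}{2\tau}$ times its $\dot H^1$ energy within the existence interval.

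Consequently your inequality, with $C_1$ depending only on the equation constants, is false for the $L^2_tH^{k+1}$ term. An $L^2_tH^{k+1}$ bound with $M_0$ independent of the data cannot be obtained. The paper's proof has the same defect: it asserts the unweighted bound, e.g. in \eqref{vh1}, without accounting for the weight $1/\tilde v$. The remedy is to state the dissipation bound with the weight $1/v$, or to let $M_0$ depend on $\sup v_0$. Only the $C_tH^k$ control and the positivity are used in the continuation argument for Theorem \ref{thm:main}, so nothing downstream is affected.
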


\begin{proof}
The proof is standard and is therefore relegated to Appendix \ref{appendix:LWP}.
\end{proof}

We now turn to the other proposition, which provides \textit{a priori} uniform estimates.

\begin{prop}[\textit{A Priori} Uniform Estimates]\label{prop:est}
Let \(\t,\m,\k\) be any positive constants and \(k\ge 3\) be an integer.
If \(U\coloneqq (v,u,\th)\) is a solution of \eqref{bnsfsys} on \([0,T)\) for some \(T>0\) subject to the initial data \(U_0\coloneqq(v_0,u_0,\th_0)\) such that 
\begin{align*}
(v-1, u, \th-1) &\in \big(C([0,t];H^k(\RR)) \cap L^2(0,t;H^{k+1}(\RR))\big)^3,\\
0<v^{-1}, \th^{-1} &\in L^\infty(0,t;L^\infty(\RR)), \qquad \forall t \in(0,T),
\end{align*}
then the following inequalities are satisfied:
\begin{align}
\sup_{t\in[0,T)} \norm{U}_k(t) &\le M(\norm{U_0}_k, \ell_0^{-1}, T), \label{est:Hk} \\
\norm{1/v}_{L^\infty([0,T)\times\RR)}, \norm{1/\th}_{L^\infty([0,T)\times\RR)} &\le M(\norm{U_0}_0, \ell_0^{-1}, T). \label{est:lbd}
\end{align}
\end{prop}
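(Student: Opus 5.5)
We derive \eqref{est:lbd} first, and bound the specific volume in both directions, since everything later depends on it; the higher-order bound \eqref{est:Hk} then follows from energy estimates. \textbf{Step 1 (relative entropy).} With \(\Phi(z)=z-1-\ln z\), multiply the equations of \eqref{bnsfsys} by the entropy multipliers and integrate. This should give, for all \(t<T\),
\[
\int_\RR \Big(\tfrac{u^2}{2}+R\Phi(v)+\tfrac{R}{\g-1}\Phi(\th)\Big)dx+\int_0^t\!\!\int_\RR \Big(\m\frac{u_x^2}{v\th}+\k\frac{\th_x^2}{v\th^2}+R\t\frac{v_x^2}{v^3}\Big)\le C\norm{U_0}_0^2 .
\]
The new Brenner term arises because \(p\) pairs with \(\t(v_x/v)_x\), and \(\int \th\,\t(v_x/v)_x\cdot(\tfrac1{\th}\cdots)\) produces, after integration by parts, a good term plus a cross term \(v_x\th_x\). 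I expect to absorb the cross term using the \(\k\)-dissipation and the relation \(\t=\k/c_p\) with \(c_p=R\g/(\g-1)\). This structural identity should make the quadratic form in \((v_x,\th_x)\) nonnegative. Checking this exactly is the first delicate computation. The output is an \(L^2_tL^2_x\) bound on \(v_x/v^{3/2}\), plus \(\Phi(v)\in L^\infty_tL^1_x\).

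\textbf{Step 2 (De Giorgi for \(v\)), the main obstacle.} Rewrite the mass equation as a parabolic equation in divergence form for \(w=\ln v\):
\[
v\,w_t-\t w_{xx}=u_x,
\]
or alternatively in terms of \(v\) itself, \(v_t-\t(v_x/v)_x=u_x\). The forcing \(u_x\) is controlled only through \(\int\!\!\int u_x^2/(v\th)\), so it is not bounded a priori. I will therefore first get \(L^\infty\) bounds for \(\th\) and \(u\) via auxiliary estimates.
\begin{itemize}
\item Consider the "effective velocity" \(u-\t(\ln v)_x\)-type combination, or test the momentum equation by \(\m(\ln v)_x\), mimicking the BD structure. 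This should yield mixed \(L^2_tL^2_x\) bounds for \(p_x\)-type quantities.
\item Then run the De Giorgi scheme on truncations \((w-k)_+\) and \((-w-k)_+\) over levels \(k_n=M(1-2^{-n})\). Energy inequalities on nested time intervals, combined with the Step 1 bounds (viewing \(u_x\) as a right-hand side in \(L^2_tL^2_x\), weighted) and the nonlinear Chebyshev/Sobolev gain in 1D, give the recursion \(U_{n+1}\le C^nU_n^{1+\beta}\).
\end{itemize}
For large \(M\) depending on \(\norm{U_0}_0,\ell_0^{-1},T\), this forces \(U_n\to0\), giving \(\underline\ell\le v\le\overline\ell\). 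The hard point is that \(u\) may enter linearly in the flux form \((u+\t v_x/v)_x\). I would try treating \(u\) as a drift term controlled in \(L^\infty_tL^2_x\) (energy) and \(L^2_tL^\infty_x\) via \(\int\!\!\int u_x^2/(v\th)\) with \(\th\) bounded in \(L^\infty_tL^1\)-type norms. This may require a bootstrap: bound \(\sup\th\) in terms of \(\sup v\) polynomially with a small power, then close.

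\textbf{Step 3 (temperature and higher norms).} With \(v\) bounded above and below, derive \(H^1\) estimates. Test the momentum equation by \(-u_{xx}\), the mass equation by \(-v_{xx}\), and the temperature equation by \(-\th_{xx}\), using Gagliardo--Nirenberg and Gronwall. This gives \(\norm{U}_1\) and an upper bound for \(\th\). For the lower bound on \(\th\), set \(\phi=1/\th\). Its equation has \(\m u_x^2/v\) with a favorable sign and \(-pu_x\), which is bounded by \(u_x^2/v+C\th\). Hence \(\phi_t\le \frac{\k}{\cdots}(\ldots)_x+C\phi\) up to drift, and the maximum principle gives \(\th\ge c\,e^{-CT}\ell_0\). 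This depends only on the low-order quantities, as required in \eqref{est:lbd}. Finally, \eqref{est:Hk} follows by differentiating \(l\) times, \(l=2,\dots,k\), with standard commutator estimates and Gronwall, since all coefficients are now bounded above and below.
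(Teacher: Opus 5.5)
Your outline has the same skeleton as the paper's proof, but two steps do not go through as written.

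\textbf{Step 2 is not closed, and the route you lean toward is circular.} $L^\infty$ or $L^2_tL^\infty_x$ bounds on $u$ and $\theta$ extracted from $\int_0^T\!\!\int u_x^2/(v\theta)$ need upper bounds on $v\theta$, which are only available after $v$ is controlled. The BD effective velocity does not help either: here $(u-\mu(\ln v)_x)_t=-p_x-\mu\tau\big((\ln v)_{xx}/v\big)_x$ carries a third-order term. None of this is needed, because $\theta$ does not enter the mass equation. Test $v_t-\tau(v_x/v)_x=u_x$ by $(v-N)_+$ and move the derivative onto the truncation. The forcing becomes $\int vu^2\mathbf{1}_{\{v\ge N\}}\,dx\le\|v\mathbf{1}_{\{v\ge N\}}\|_{L^\infty}\sup_t\|u\|_{L^2}^2$, so only the entropy bound on $u$ enters. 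Since $\int\Phi(v)\,dx<\infty$, each time slice has a point where $(v-c)_+=0$ for $c\ge2$. Hence $\|(v-c)_+\|_{L^\infty}^2\le\int v\mathbf{1}_{\{v\ge c\}}\,dx\int((v-c)_+)_x^2/v\,dx$. Combined with $v\mathbf{1}_{\{v\ge c_k\}}\le\frac{c_k}{(c_k-c')^2}\big((v-c')_+\big)^2$, this gives $E_k\le C^kE_{k-1}^2/L^2$ directly on $[0,T)$. No nested time intervals are needed, since the truncations vanish at $t=0$ once the levels exceed $\sup v_0$. The lower bound is the same argument for $w=1/\sqrt v$, using $|\{v\le N^{-2}\}|\le C\mathcal{E}_0/\Phi(N^{-2})$.

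\textbf{Step 3 would fail as stated.} With $v$ bounded, you only know $\theta\mathbf{1}_{\{\theta\ge2\}}\in L^\infty_tL^1_x$ and the weighted dissipations $\int_0^T\!\!\int\theta_x^2/(v\theta^2)$ and $\int_0^T\!\!\int u_x^2/(v\theta)$. You do not even know $\theta-1\in L^\infty_tL^2_x$. Testing by $-u_{xx}$ and $-\theta_{xx}$ generates $\int\theta^2v_x^2$, $\int\theta_x^2$, $\int\theta^2u_x^2$ and $\int u_x^4$. A direct Gagliardo--Nirenberg/Gronwall argument is then superlinear: for instance $\int u_x^4\lesssim\|u_x\|_{L^2}^3\|u_{xx}\|_{L^2}$ leaves $\|u_x\|_{L^2}^6$, so there is no global bound for large data. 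Likewise, testing the mass equation by $-v_{xx}$ produces $+\frac{2\tau}{3}\int v_x^4/v^3$, which $\tau\int v_{xx}^2/v$ cannot absorb for large data.

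The paper closes this with an intermediate chain your plan lacks:
\begin{itemize}
\item[(i)] $\|\sqrt\theta\|_{L^\infty}\le\sqrt2+\big(\int\theta_x^2/(v\theta^2)\big)^{1/2}\big(\int v\theta\mathbf{1}_{\{\theta\ge2\}}\big)^{1/2}$, so $\theta\in L^1_tL^\infty_x$.
\item[(ii)] The Kazhikhov--Shelukhin estimate for $\frac12\big(E-\frac{R}{\gamma-1}\big)^2+\frac{M}{4}u^4$. Here $E=\frac{R}{\gamma-1}\theta+\frac{u^2}{2}$ satisfies $E_t=\big(-pu+\kappa\theta_x/v+\mu uu_x/v\big)_x$, and $M$ is large enough to absorb $\int(uu_x)^2/v$. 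The source $\int u^2\theta^2/v$ is at most $C(1+\|\theta\|_{L^\infty})$ times the functional. Gronwall with (i) then gives $\theta-1\in L^\infty_tL^2_x$, $u\in L^\infty_tL^4_x$, unweighted $\theta_x\in L^2_{t,x}$, and hence $\theta\in L^2_tL^\infty_x$.
\item[(iii)] Unweighted $u_x\in L^2_{t,x}$.
\item[(iv)] $(\ln v)_x\in L^\infty_tL^2_x$ and $(\ln v)_{xx}\in L^2_{t,x}$, from $((\ln v)_x)_t=(v_t/v)_x$, where no quartic term appears.
\item[(v)] Only then the $H^1$ estimate for $u$ alone, followed by the one for $\theta$. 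In the latter, $\int_0^T\!\!\int u_x^4\le\|u_x\|_{L^2_tL^\infty_x}^2\|u_x\|_{L^\infty_tL^2_x}^2$ is already finite.
\end{itemize}
Step (ii) is the essential missing idea.

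Two minor points. Step 1 has no $v_x\theta_x$ cross term, since the Brenner term is tested only against $R(1-1/v)$, so $\tau=\kappa/c_p$ is never used. And the correct bound is $|pu_x|\le\mu u_x^2/v+R^2\theta^2/(4\mu v)$, not $C\theta$; the maximum principle still yields $1/\theta\le1/\ell_0+CT$.
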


The first proposition is formulated for initial data in \(H^k\) with \(k\ge 1\), whereas the second requires initial data in \(H^k\) with \(k\ge 3\).
This higher regularity is needed in order to apply the maximum principle when deriving a lower bound for the temperature \(\th\).
As a consequence, we establish the existence of global classical solutions for initial data in \(H^3\).

\subsection{Proof of Theorem \ref{thm:main}}
The proof relies on a standard continuation argument.
Let \(k\ge3\) be an integer.
We first note that Proposition \ref{prop:LWP} ensures the local-in-time existence of a solution \((v,u,\th)\) to \eqref{bnsfsys} on \([0,T]\) for some \(T>0\), corresponding to the initial data \((v_0,u_0,\th_0)\), such that 
\[
(v-1,u,\th-1) \in \big(C([0,T];H^k(\RR)) \cap L^2(0,T;H^{k+1}(\RR))\big)^3
\]
and 
\[
v,\th > 0 \text{ on } [0,T], \quad \text{and} \quad v^{-1}, \th^{-1} \in L^\infty(0,T;L^\infty(\RR)).
\]
To establish global existence, let \(T^*\) be the supremum of all such \(T\) for which a solution with the aforementioned properties exists.
Then, Proposition \ref{prop:est} implies that the solution satisfies the estimate 
\begin{align*}
\sup_{t\in[0,T^*)} \norm{U}_k(t) &\le M(\norm{U_0}_k, \ell_0^{-1}, T^*), \\
\norm{1/v}_{L^\infty([0,T^*)\times\RR)}, \norm{1/\th}_{L^\infty([0,T^*)\times\RR)} &\le M(\norm{U_0}_0, \ell_0^{-1}, T^*).
\end{align*}
Assume \(T^* < \infty\).
With this bound, for any \(t < T^*\), the solution can be extended until time \(t + \d\), where \(\d\) is the local existence time provided by Proposition \ref{prop:LWP} by taking the solution at time \(t\) as the new initial data.
From the above uniform estimate, we know that this \(\d\) can be chosen to be independent of \(t\), hence we can extend the solution beyond \(T^*\), which contradicts the definition of \(T^*\).
This shows that \(T^* = \infty\), which means that the solution exists globally in time. 
The uniqueness follows from the same argument as in the proof of the local well-posedness in Appendix \ref{appendix:LWP}.
This completes the proof of Theorem \ref{thm:main}. \qed

\section{Proof of Proposition \ref{prop:est}: \textit{A Priori} Estimates}\label{sec:EST}
\setcounter{equation}{0}
\subsection{Relative Entropy and Its Balance Law} \label{subsec:Balance}
This section is devoted to proving Proposition \ref{prop:est}.
To this end, we first derive a uniform-in-time estimate from a balance law for the relative entropy.
The entropy of the system is given by 
\[
\et = -R\log v - \frac{R}{\g-1}\log \th.
\]
However, since the entropy is not sign-definite and may fail to be integrable for a general background state (other than \((1,0,1)\)), it is natural to work with the relative entropy instead.
The relative entropy with respect to the background state \(\Ubar\coloneqq (1,0,1)\) is defined as follows: (see \cite[Appendix A]{KVW-ARMA25})
\begin{equation}\label{relent}
\et(U|\Ubar)
=\et(U) - \et(\Ubar) -\nabla\et(\Ubar)(U-\Ubar)
= R\Phi(v) + \frac{R}{\g-1}\Phi(\th) + \frac{u^2}{2},
\end{equation}
where \(\Phi\) is a nonnegative convex function defined by 
\[
\Phi(z) = z-1-\log z, \qquad z>0.
\]
The relative entropy enjoys several useful properties, which we briefly summarize here.
First, due to the convexity of the entropy, the definition \eqref{relent} ensures positive definiteness.
Moreover, since the function \(\Phi\) exhibits a locally quadratic behavior, the relative entropy inherits a similar structure.
More precisey, for any \(0<a<b<\infty\), there exists a constant \(C=C(a,b)>0\) such that
\begin{equation*}
C^{-1}\abs{z-1}^2 \le \Phi(z) \le C \abs{z-1}^2, \qquad \forall z\in[a,b].
\end{equation*}
Hence, for an initial datum \(U_0=(v_0,u_0,\th_0)\) such that \(v_0\) and \(\th_0\) are bounded above and below away from zero, the following holds:
\begin{equation}\label{LQ}
C^{-1} \norm{U_0}_0 \le \int_\RR \et(U_0|\Ubar) dx \le C \norm{U_0}_0.
\end{equation}
Furthermore, it is easy to check that there exists a constant \(C>0\) such that
\begin{equation}\label{Qlin}
\begin{aligned}
&\Phi(z) \ge Cz, \qquad &&\forall z\ge2,\\
&\Phi(z) \ge C, \qquad &&\forall z\in(0,1/2).
\end{aligned}
\end{equation}

On the other hand, since the coefficients do not affect our analysis, we may assume without loss of generality that \(\t=\m=\k=1\).

\vspace{2mm}
In the following analysis, we repeatedly use integration by parts.
However, the vanishing of the boundary terms at the far fields \(\pm\infty\) cannot be directly justified due to the limited regularity.
To make the argument rigorous, we introduce a smooth cutoff function \(\phi_R\) for each \(R>0\), such that
\[
\phi_R(x)=
\begin{cases}
1 &\text{if } x\in[-R,R]\\
0 &\text{if } x\in \RR \setminus [-2R,2R]
\end{cases}
\]
and smoothly interpolates on the transition regions.
Moreover, it can be chosen such that
\[
\abs{\phi_R'(x)} \le 2/R, \qquad \forall x\in[-2R,-R] \cup [R,2R].
\]
We then perform the energy estimate for \(f \cdot \phi_R\) instead of \(f\), and pass to the limit as \(R\to\infty\) to obtain the desired result.
Thus, the boundary terms can be rigorously justified to vanish.

\vspace{2mm}
We now examine the balance law for the relative entropy and obtain the following lemma, which provides a uniform-in-time estimate.

\begin{lem}[Balance Law for the Relative Entropy]\label{lem:balancelaw}
Let \(U=(v,u,\th)\) be a solution of the system \eqref{bnsfsys} on \([0, T)\) subject to the initial data \(U_0=(v_0,u_0,\th_0)\) with \(v_0\) and \(\th_0\) positive everywhere,
\begin{equation}\label{Ecaldef}
\Ecal_0 \coloneqq \int_\RR \et(U_0|\Ubar) dx
= \int_\RR R\Phi(v_0(x)) + \frac{R}{\g-1}\Phi(\th_0(x)) + \frac{u_0^2(x)}{2} dx < \infty.
\end{equation}
If \(U=(v,u,\th)\) satisfies 
\[
(v-1,u,\th-1) \in \big(C([0,T];H^1(\RR)) \cap L^2(0,T;H^2(\RR))\big)^3,
\]
and \(v\) and \(\th\) do not vanish, the following inequality holds:
\begin{equation}\label{balancelaw}
\sup_{t\in [0, T]} \int_\RR \et(U|\Ubar)(t, x) dx
+ \int_0^T \int_\RR R\frac{(v_x)^2}{v^3} + \frac{(u_x)^2}{v\th} + \frac{(\th_x)^2}{v\th^2} dx dt
\le C\Ecal_0.
\end{equation}
In particular, the following also holds:
\begin{equation}\label{vthL1uL2}
\sup_{t\in [0, T)} \norm{v\one{v\ge 2}}_{L^1(\RR)} + \norm{\th\one{\th\ge 2}}_{L^1(\RR)}
+ \norm{u}_{L^2(\RR)}^2 \le C\Ecal_0,    
\end{equation}
and
\begin{equation}\label{weightedH1}
\int_0^T \int_\RR \frac{(v_x)^2}{v^3} + \frac{(u_x)^2}{v\th} + \frac{(\th_x)^2}{v\th^2} dx dt
\le C\Ecal_0.
\end{equation}
\end{lem}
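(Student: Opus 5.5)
We will differentiate $\int_\RR \et(U|\Ubar)\,dx$ in time along the solution, using the system \eqref{bnsfsys} with $\t=\m=\k=1$. We work with the cutoff $\phi_R$ as described above and let $R\to\infty$ at the end. The three contributions are computed separately.

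\emph{Specific volume.} Since $\Phi'(v)=1-1/v$, the continuity equation gives
\[
\frac{d}{dt}\int R\Phi(v)\,dx=\int R\Big(1-\frac1v\Big)\Big(u_x+\big(\tfrac{v_x}{v}\big)_x\Big)dx .
\]
Integrating by parts, the diffusive part equals $-R\int \frac{v_x}{v^2}\frac{v_x}{v}\,dx=-R\int\frac{(v_x)^2}{v^3}\,dx$. This is exactly the new Brenner dissipation term. The transport part is $R\int(1-\frac1v)u_x\,dx$.

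\emph{Velocity.} Multiplying the momentum equation by $u$ gives
\[
\frac{d}{dt}\int\frac{u^2}{2}\,dx=\int p\,u_x\,dx-\int\frac{(u_x)^2}{v}\,dx .
\]

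\emph{Temperature.} Since $\Phi'(\th)=1-1/\th$, the energy equation gives
\[
\frac{d}{dt}\int\frac{R}{\g-1}\Phi(\th)\,dx=\int\Big(1-\frac1\th\Big)\Big(-pu_x+\big(\tfrac{\th_x}{v}\big)_x+\frac{(u_x)^2}{v}\Big)dx .
\]
After integration by parts, the heat term contributes $-\int\frac{(\th_x)^2}{v\th^2}\,dx$.

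\emph{Cancellations.} The term $\int(1-\frac1\th)\frac{(u_x)^2}{v}\,dx$ cancels against $-\int\frac{(u_x)^2}{v}\,dx$ and leaves $-\int\frac{(u_x)^2}{v\th}\,dx$. The pressure terms collect into
\[
\int\Big(-R\frac{u_x}{v}+p\,u_x-p\,u_x+\frac{p}{\th}u_x\Big)dx=\int\Big(-\frac{Ru_x}{v}+\frac{Ru_x}{v}\Big)dx=0,
\]
using $p/\th=R/v$. What remains from the transport part of the $v$-equation is $R\int u_x\,dx$, which vanishes because $u\in H^1$.

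The result is the exact identity
\[
\frac{d}{dt}\int\et(U|\Ubar)\,dx+\int R\frac{(v_x)^2}{v^3}+\frac{(u_x)^2}{v\th}+\frac{(\th_x)^2}{v\th^2}\,dx=0 .
\]
Integrating in time yields \eqref{balancelaw} with $C=1$ (any $C\ge 1$ works).

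\emph{Justifying the integrations by parts.} This is the main technical point. With the cutoff $\phi_R$, every boundary flux produces an extra term carrying $\phi_R'$. A typical one is
\[
\int R\Big(1-\frac1v\Big)\frac{v_x}{v}\,\phi_R'\,dx .
\]
We use $v,\th\in[\ell,L]$ on $[0,T]\times\RR$, which follows from the $C([0,T];H^1)$ regularity, Sobolev embedding, and nonvanishing/continuity. Then all such terms are bounded by
\[
\frac{C}{R}\int_0^T\big(\norm{U-\Ubar}_{H^1}^2+\norm{U-\Ubar}_{H^2}^2\big)\,dt,
\]
which tends to $0$ as $R\to\infty$. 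Monotone convergence handles the dissipation terms. Time differentiation of the cutoff integrals is justified by the $L^2_tH^2_x$ regularity, since $U_t\in L^2_tL^2_x$.

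\emph{Consequences.} The bound \eqref{weightedH1} is simply a restatement of the dissipation part, absorbing $R$ into $C$. For \eqref{vthL1uL2}, the $u$ bound is immediate from the $u^2/2$ term. The $L^1$ bounds on $v\one{v\ge2}$ and $\th\one{\th\ge2}$ follow from \eqref{Qlin}, since $\Phi(z)\ge Cz$ for $z\ge2$, applied pointwise inside $\sup_t\int\et(U|\Ubar)\,dx\le C\Ecal_0$.
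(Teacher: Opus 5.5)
Your proposal is correct and follows essentially the paper's route. You use the same pointwise computation of $\partial_t\eta(U|\bar{U})$, cancel the pressure terms via $p/\theta=R/v$, integrate by parts to reach the exact dissipation identity, and invoke \eqref{Qlin} for the consequences, and your cutoff justification is more explicit than the paper's.

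One small slip: $\sup_t\int\eta(U|\bar{U})\,dx=\mathcal{E}_0$ (attained at $t=0$), and the dissipation integral is separately bounded by $\mathcal{E}_0$. The left-hand side of \eqref{balancelaw} therefore equals $2\mathcal{E}_0-\int\eta(U|\bar{U})(T)\,dx$, so your identity gives \eqref{balancelaw} with $C=2$, not $C=1$.
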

\begin{proof}
We begin by deriving the time derivative of the relative entropy:
\begin{align*}
\rd_t \et(U|\Ubar)
&= R\Big(1-\frac{1}{v}\Big)v_t + \frac{R}{\g-1}\Big(1-\frac{1}{\th}\Big)\th_t + u u_t \\
&= R\Big(1-\frac{1}{v}\Big)u_x - \Big(1-\frac{1}{\th}\Big)pu_x - up_x \\
&\qquad
+ R\Big(1-\frac{1}{v}\Big)\Big(\frac{v_x}{v}\Big)_x 
+ \Big(1-\frac{1}{\th}\Big)\Big(\frac{\th_x}{v}\Big)_x 
+ \Big(1-\frac{1}{\th}\Big)\frac{(u_x)^2}{v}
+ u \Big(\frac{u_x}{v}\Big)_x\\
&= \Big(Ru - pu + \frac{uu_x}{v}\Big)_x 
+ R\Big(1-\frac{1}{v}\Big)\Big(\frac{v_x}{v}\Big)_x 
+ \Big(1-\frac{1}{\th}\Big)\Big(\frac{\th_x}{v}\Big)_x 
- \frac{(u_x)^2}{v\th}.
\end{align*}
Integrating over space, the flux term vanishes, and integration by parts yields
\begin{align*}
\frac{d}{dt} \int_\RR \et(U|\Ubar)(t, x) dx
&= \int_\RR R\Big(1-\frac{1}{v}\Big)\Big(\frac{v_x}{v}\Big)_x 
+ \Big(1-\frac{1}{\th}\Big)\Big(\frac{\th_x}{v}\Big)_x 
- \frac{(u_x)^2}{v\th} dx \\
&= -\int_\RR R\frac{(v_x)^2}{v^3} + \frac{(\th_x)^2}{v\th^2} + \frac{(u_x)^2}{v\th} dx.
\end{align*}
This dissipative structure implies that for any \(t\in[0,T)\),
\[
\int_\RR \et(U|\Ubar)(t,x)\,dx
+\int_0^t\int_\RR R\frac{(v_x)^2}{v^3} + \frac{(\th_x)^2}{v\th^2} + \frac{(u_x)^2}{v\th} \,dx\,ds
=\Ecal_0.
\]
Since \(t\in[0,T)\) is arbitrary, \eqref{balancelaw} follows.
This with \eqref{Qlin} yields \eqref{vthL1uL2} and \eqref{weightedH1}.
\end{proof}

\subsection{De Giorgi Estimates and its Consequences} \label{subsec:DeGiorgi}
The next step is to derive uniform bounds on \(v\) and \(1/v\).
In the present setting, using only the bounds obtained in Lemma \ref{lem:balancelaw}, we implement the De Giorgi iteration to derive uniform bounds on \(v\) and \(1/v\).

\vspace{2mm}
In general, obtaining \(L^\infty_t L^\infty_x\) bounds requires \(L^\infty_t H^1_x\) estimates, which are not available at this stage of the argument.
However, the De Giorgi method exploits the parabolic structure of the system together with the bounds from Lemma \ref{lem:balancelaw}---roughly speaking, \(L^\infty_t L^2_x\) and \(L^2_t H^1_x\) estimates---to yield the desired \(L^\infty_t L^\infty_x\) bounds.
Note that the order of the argument below is crucial, since the dissipation terms involve weights that are not a priori bounded.

\vspace{2mm}
We first introduce a numerical lemma needed for the De Giorgi iteration.

\begin{lem}\label{lem:DeGiorgiNum} \cite[Lemma 1]{V-NODEA07}
Let \(C>0\) and \(\b>1\) be any constants.
Then, there exists a constant \(C_0=C_0(C,\b)\) such that for every sequence \(\{W_k\}_{k=0}^\infty\) verifying \(0<W_0<C_0\) and for every \(k\ge 0\), 
\[
0 \le W_{k+1} \le C^k W_k^\b,
\]
the following holds: 
\[
\lim_{k\to\infty} W_k = 0.
\]
\end{lem}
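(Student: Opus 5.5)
The plan is to reduce the recursion to the scalar inequality \(Y_{k+1}\le Y_k^\b\) by choosing an exponential change of variables, and then show that the rescaled sequence decays to zero once its starting value is small.

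\textbf{Step 1 (normalization).} Since \(\b>1\) and \(C>0\), choose \(\lambda>0\) with \(C\lambda^{\b-1}=1\) up to a geometric factor; concretely, set
\[
Y_k \coloneqq A^{k}W_k,
\]
where \(A>1\) is to be determined. Then
\[
Y_{k+1}=A^{k+1}W_{k+1}\le A^{k+1}C^kW_k^\b=A^{k+1}C^kA^{-k\b}Y_k^\b=A\,(CA^{1-\b})^kY_k^\b .
\]
Choosing \(A\ge 1\) large so that \(CA^{1-\b}\le 1\) (possible because \(\b>1\)), we obtain \(Y_{k+1}\le A\,Y_k^\b\). (We may also take \(A\ge C\) and \(A\ge 1\) without loss.)

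\textbf{Step 2 (absorbing the constant).} Set \(Z_k\coloneqq A^{1/(\b-1)}Y_k\). Then
\[
Z_{k+1}\le A^{1/(\b-1)}A\,Y_k^\b=A^{\b/(\b-1)}Y_k^\b=Z_k^\b .
\]
Now choose \(C_0\coloneqq \tfrac12 A^{-1/(\b-1)}\), which depends only on \(C\) and \(\b\). If \(0<W_0<C_0\), then \(Z_0=A^{1/(\b-1)}W_0<\tfrac12\), and by induction \(Z_k\le Z_0^{\b^k}\le 2^{-\b^k}\). Hence
\[
W_k=A^{-k}A^{-1/(\b-1)}Z_k\le A^{-1/(\b-1)}2^{-\b^k}\to0 .
\]
Nonnegativity, \(W_k\ge0\), is given, so \(\lim_{k\to\infty}W_k=0\), which is the conclusion of Lemma \ref{lem:DeGiorgiNum}.

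The only delicate point is Step 1: the exponential growth \(C^k\) must be beaten by the superlinear power \(\b>1\). This is achieved by the geometric weight \(A^k\), whose exponent mismatch \(A^{k(1-\b)}\) absorbs \(C^k\). Everything else is a direct induction, and the result is the standard lemma \cite[Lemma 1]{V-NODEA07}.
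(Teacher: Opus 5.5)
Your proof is correct and complete. With $A=\max\{1,C^{1/(\beta-1)}\}$, the rescaled sequence $Z_k=A^{k+1/(\beta-1)}W_k$ satisfies $Z_{k+1}\le Z_k^{\beta}$. The hypothesis $W_0<C_0=\tfrac12 A^{-1/(\beta-1)}$ gives $Z_0<\tfrac12$, hence
\[
W_k\le A^{-1/(\beta-1)}2^{-\beta^k}\to 0,
\]
and $C_0$ depends only on $C$ and $\beta$.

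The paper does not prove this lemma; it only quotes it from the cited reference, so there is no in-paper argument to compare against. The textbook proof of such lemmas is a direct induction giving geometric decay, $W_k\le W_0 r^{-k}$ with $r=C^{1/(\beta-1)}$. That version needs $C>1$ and $W_0\le C^{-1/(\beta-1)^2}$. Your rescaling uses the same mechanism: a geometric weight beats the factor $C^k$ because $\beta>1$. Your version additionally yields double-exponential decay and handles $C\le 1$ without a separate remark.

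The only blemish is the leftover phrase at the start of Step 1 about choosing $\lambda$ with $C\lambda^{\beta-1}=1$ ``up to a geometric factor''. It plays no role and should be deleted.
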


We now apply the De Giorgi method to obtain bounds on \(v\) and \(1/v\).

\begin{lem}[Uniform Bounds on \(v\) and \(1/v\)] \label{lem:DeGiorgi}
Under the same assumptions as Lemma \ref{lem:balancelaw}, the following holds for \(t\in [0, T)\):
\begin{equation}\label{vublb}
\begin{aligned}
\sup_{t\in [0, T)} \norm{v}_{L^\infty(\RR)}
&\le M(\norm{v_0}_{L^\infty(\RR)}, \Ecal_0, T)\\
\sup_{t\in [0, T)} \norm{1/v}_{L^\infty(\RR)}
&\le M(\Ecal_0, \ell_0^{-1}, T).
\end{aligned}
\end{equation}
\end{lem}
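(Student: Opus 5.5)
The plan is to run the De Giorgi iteration on the logarithmic variable $w\coloneqq\log v$ rather than on $v$. The reason is that, with $\t=1$, the first equation of \eqref{bnsfsys} can be written as
\[
v_t=(w_x)_x+u_x ,
\]
so the diffusion has constant coefficient in $w$, while the time derivative is still carried by $v$. I would prove the upper bound first and then the lower bound. The order matters because the control of the source term $u_x$ needs some information on $v$.

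\textbf{Upper bound.} Fix a level $c\ge \log 2$ and test the $v$-equation with $(w-c)_+$. Define
\[
G_c(v)\coloneqq\int_{e^c}^{v}(\log s-c)_+\,ds ,
\]
which is nonnegative, convex and superlinear. Integration by parts (with the cutoff $\phi_R$) gives
\[
\frac{d}{dt}\int_\RR G_c(v)\,dx+\int_\RR \abs{\rd_x(w-c)_+}^2dx=-\int_\RR u\,\rd_x(w-c)_+\,dx .
\]
By Young's inequality the right-hand side is at most $\frac12\int\abs{\rd_x(w-c)_+}^2+\frac12\int u^2\one{w>c}$. Note that $G_c(v)\gtrsim (w-c)_+^2$ on $\{w>c\}$, because $v\ge e^c\ge 2$ there.

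I then take levels $c_k=c_*(2-2^{-k})$ and set
\[
W_k\coloneqq\sup_{t}\int G_{c_k}(v)\,dx+\int_0^T\!\!\int\abs{\rd_x(w-c_k)_+}^2 .
\]
Since $v_0\le\norm{v_0}_{L^\infty}$, the initial term vanishes once $c_*\ge\log\norm{v_0}_{L^\infty}$. The one-dimensional Gagliardo--Nirenberg inequality gives $(w-c_{k})_+\in L^6_{t,x}$ with norm bounded by $W_{k}^{1/2}$. Chebyshev then bounds $\abs{\{w>c_{k+1}\}}$ by $(2^k/c_*)^6W_k^{3}$, and also shows that the set $\{w>c_0\}$ has finite measure by \eqref{vthL1uL2}.

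The remaining task is to bound the source term $\int_0^T\!\int u^2\one{w>c_{k+1}}$ by $C^kW_k^{\b}$ with $\b>1$. Lemma~\ref{lem:DeGiorgiNum} then forces $W_k\to0$, that is, $w\le 2c_*$.

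\textbf{Main obstacle: integrability of $u$.} The estimate $u\in L^\infty_tL^2_x$ alone gives no gain of a power. I would use the local embedding
\[
\abs{u(t,x)}^2\le 2\int_x^{x+1}u^2+2\Big(\int_x^{x+1}\abs{u_x}\Big)^2
\le C\Ecal_0+2\Big(\int_\RR\frac{u_x^2}{v\th}\Big)\sup_x\int_x^{x+1}v\th .
\]
Here $\sup_x\int_x^{x+1}v\th\le \norm{v}_{L^\infty}\big(1+C\Ecal_0\big)$ by \eqref{vthL1uL2}. Combined with \eqref{weightedH1}, this yields
\[
\norm{u}_{L^2_tL^\infty_x}^2\le C(\Ecal_0,T)\big(1+\norm{v}_{L^\infty_{t,x}}\big).
\]
Therefore
\[
\int_0^T\!\!\int u^2\one{w>c_{k+1}}\le C(1+\norm{v}_{L^\infty})\sup_t\abs{\{w(t)>c_{k+1}\}},
\]
and the measure factor is superlinear in $W_k$ by a sup-in-time Chebyshev bound with $G_{c_k}$. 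The price is that $c_*$ must be taken of size $\log\big(C(1+\norm{v}_{L^\infty})\big)$ to make $W_0$ small. The conclusion is then an inequality $\norm{v}_{L^\infty}\le C(1+\norm{v}_{L^\infty})^{\a}\max(\norm{v_0}_{L^\infty},\ldots)$, and I expect the logarithmic dependence to give $\a<1$. Closing this sublinear feedback is, I expect, the main difficulty. If a bootstrap is needed, I would first run it on $[0,T']$, where $\norm{v}_{L^\infty}$ is finite by assumption, and use continuity in time.

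\textbf{Lower bound.} With the upper bound in hand, I would repeat the argument with $(-w-c)_+$ and levels $c\ge\log(2/\ell_0)$. The identity has the same structure, now with $\tilde G_c(v)=\int_v^{e^{-c}}(-\log s-c)_+\,ds$, which is bounded and comparable to $(-w-c)_+^2e^{-c}$. Because of this factor, the De Giorgi energy must be normalized by $e^{-c}$. Finiteness of $\abs{\{v<1/2\}}$ follows from \eqref{Qlin}, and the source estimate now uses the upper bound on $v$ already proved. The iteration then gives $1/v\le M(\Ecal_0,\ell_0^{-1},T)$.
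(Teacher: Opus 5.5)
Your variable $w=\log v$ and the identity $\frac{d}{dt}\int G_c(v)\,dx+\int\abs{\rd_x(w-c)_+}^2dx=-\int u\,\rd_x(w-c)_+\,dx$ are correct, and the log variable even removes the weight $v$ from the source. The iteration built on them does not close, for three concrete reasons.

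First, the claimed superlinearity is false. Your source bound pairs $\norm{u}_{L^2_tL^\infty_x}^2$ with $\sup_t\abs{\{w(t)>c_{k+1}\}}$. The only sup-in-time information in $W_k$ is $\sup_t\int G_{c_k}(v)\,dx$, so Chebyshev gives $\sup_t\abs{\{w(t)>c_{k+1}\}}\le W_k/G_{c_k}(e^{c_{k+1}})$, which is \emph{linear} in $W_k$. The dissipative part of $W_k$ is only integrable in time and cannot improve a $\sup_t$ quantity. Your $L^6_{t,x}$ Chebyshev bound controls the space-time measure of $\{w>c_{k+1}\}$, not the time-slice measure your pairing needs. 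So $\b=1$, and Lemma \ref{lem:DeGiorgiNum} does not apply.

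Second, the feedback is superlinear as you set it up, not sublinear. The iteration yields $w\le 2c_*$. With $c_*\gtrsim\log\big(C(1+\norm{v}_{L^\infty})\big)$ this gives $\norm{v}_{L^\infty}\le e^{2c_*}\approx C^2(1+\norm{v}_{L^\infty})^2$, i.e. $\a=2$. Nothing in the sketch supports $\a<1$.

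Third, in the lower bound the entropy saturates. One has $\tilde G_c(v)=e^{-c}\big(1-(1+\rho)e^{-\rho}\big)$ with $\rho=(-w-c)_+$, which is comparable to $e^{-c}\min(\rho^2,1)$, not to $e^{-c}\rho^2$. The energy therefore gives no $L^\infty_tL^2_x$ control of $(-w-c)_+$, and the Gagliardo--Nirenberg step is unavailable there.

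The missing idea is that $u\in L^\infty_tL^2_x$ \emph{does} suffice in one dimension. No $L^2_tL^\infty_x$ bound on $u$ and no dependence on $\norm{v}_{L^\infty}$ is needed. The paper, working with $(v-c)_+$, puts $L^2_x$ on $u$ and $L^\infty_x$ on the truncation: $\int vu^2\one{v\ge c_k}dx\le\norm{v\one{v\ge c_k}}_{L^\infty}\norm{u}_{L^2}^2$. It then integrates $\rd_x(v-c')_+$ from a point where the truncation vanishes; such a point exists because $\Ecal_0<\infty$. This gives $\norm{(v-c')_+}_{L^\infty}^2\le\int v\one{v\ge c'}dx\int\frac{((v-c')_+)_x^2}{v}dx$. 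The first factor is bounded by the previous level's $\sup_t\int(v-c_{k-1})_+^2dx$ via \eqref{NL}, and the second by its dissipation. Integrating in time gives the quadratic recursion $E_k\le C^kL^{-2}E_{k-1}^2$, with smallness as $L\to\infty$.

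The same trick works verbatim in your log variable. With $c_{k-1}<c'<c_k$ and $\d=c_k-c'$, one has $\int u^2\one{w>c_k}dx\le\norm{u}_{L^2}^2\,\d^{-2}\abs{\{w>c'\}}\int\abs{\rd_x(w-c')_+}^2dx$, which is again quadratic in the previous level's energy. For the lower bound the paper avoids the saturation of $\tilde G_c$ by using $\wbar^N=(1/\sqrt v-N)_+$. Its energy $\int(\wbar^N)^2dx$ is genuinely quadratic, with dissipation $\int v_x^2v^{-4}\one{v\le 1/N^2}dx$, and the same $L^\infty_x$-on-truncation, $L^2_x$-on-$u$ pairing applies.
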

\begin{proof}
In the proof, we first obtain an upper bound for \(v\) and then a strictly positive lower bound.
Each bound is obtained via the De Giorgi method, which consists of two main steps.
In the first step, we introduce an energy sequence \(\{E_k\}\) and show that it is well-defined.
In the second step, we establish a nonlinear recursive structure for \(\{E_k\}\) so that Lemma \ref{lem:DeGiorgiNum} can be applied.

\vspace{2mm}
\(\bullet\) Uniform Bounds on the Upper Bound for \(v\):

\step{1}
Let \(N \ge 2\) be any constant with \(N > \norm{v_0}_{L^\infty(\RR)}\) and we consider \(\vbar^N \coloneqq (v-N)_+\).
Then, since we have
\[
(v-N)_t - \Big(\frac{(v-N)_x}{v}\Big)_x = u_x,
\]
it follows that 
\begin{align*}
\frac{1}{2}\frac{d}{dt}\int_\RR (\vbar^N)^2 dx
+\int_\RR \frac{((\vbar^N)_x)^2}{v}dx
&=\int_\RR \vbar^N u_x dx
=-\int_\RR (\vbar^N)_x u dx\\
&\le \frac{1}{4}\int_\RR \frac{((\vbar^N)_x)^2}{v}dx
+\int_\RR v u^2 \one{v\ge N}dx.
\end{align*}
Then, using \eqref{vthL1uL2}, we find that
\begin{equation}\label{vstep1}
\frac{1}{2}\frac{d}{dt}\int_\RR (\vbar^N)^2 dx
+\frac{3}{4}\int_\RR \frac{((\vbar^N)_x)^2}{v}dx
\le \norm{v\one{v\ge N}}_{L^\infty(\RR)} M(\Ecal_0).
\end{equation}
On the other hand, for each \(t\in[0,T)\), there exists \(y\) (we omit the \(t\) dependence) such that 
\begin{equation}\label{refpt}
\vbar^N(t,y)=(v-N)_+(t,y)=0
\end{equation}
because if not, \(\Ecal_0\) cannot be finite due to \eqref{Qlin}.
Then, for each \(t\in[0,T)\), we observe
\begin{align*}
\vbar^N(t,x)
&=\vbar^N(t,y)
+\int_y^x (\vbar^N)_z(t,z) dz\\
&\le \sqrt{\int_\RR \frac{((\vbar^N)_x)^2}{v}dx} \sqrt{\int_\RR v\one{v\ge N}dx}
\le M(\Ecal_0)\sqrt{\int_\RR \frac{((\vbar^N)_x)^2}{v}dx}.
\end{align*}
This together with \eqref{vstep1} implies that 
\[
\frac{1}{2}\frac{d}{dt}\int_\RR (\vbar^N)^2 dx
+\frac{3}{4}\int_\RR \frac{((\vbar^N)_x)^2}{v}dx
\le \bigg(N+M(\Ecal_0)\sqrt{\int_\RR \frac{((\vbar^N)_x)^2}{v}dx}\bigg) M(\Ecal_0).
\]
Thus, we obtain 
\[
\frac{1}{2}\frac{d}{dt}\int_\RR (\vbar^N)^2 dx
+\frac{1}{2}\int_\RR \frac{((\vbar^N)_x)^2}{v}dx
\le M(N,\Ecal_0).
\]
Then, by the integration with respect to \(t\) on both sides, we get 
\[
\sup_{t\in[0,T)} \int_\RR (\vbar^N)^2 dx
+\int_0^T \int_\RR \frac{((\vbar^N)_x)^2}{v} dx
\le M(N,\Ecal_0,T)
\]

We fix \(N > \norm{v_0}_{L^\infty(\RR)}\), and set \(M^*\) be the RHS of the above inequality with this fixed \(N\).
Let \(L\) be any constant with \(L > 2N\) (which will be chosen later) and consider a sequence 
\[
c_k \coloneqq L(1-2^{-k-1}), \qquad \forall k\in \mathbb{N}\cup\{0\}.
\]
Note that \(L > c_{k+1} > c_k \ge c_0 = L/2 > N\) for all \(k\ge0\) and that \(\lim_{k\to\infty} c_k=L\).

\vspace{2mm}
For each \(k\ge0\), we define 
\[
E_k \coloneqq
\sup_{t\in[0,T)} \int_\RR (\vbar^{c_k})^2 dx
+\int_0^T \int_\RR \frac{((\vbar^{c_k})_x)^2}{v} dx dt.
\]
Note that since \(c_0 = L/2 > N\), we have \(E_0 \le M^*\).
This shows that \(E_k\) is well-defined.

\step{2}
Using the observation in \eqref{vstep1}, we find that 
\[
\frac{1}{2}\frac{d}{dt}\int_\RR (\vbar^{c_k})^2 dx
+\frac{1}{2}\int_\RR \frac{((\vbar^{c_k})_x)^2}{v}dx
\le \norm{v\one{v\ge c_k}}_{L^\infty(\RR)} M(\Ecal_0).
\]
Integration over \(t\in[0,T)\) yields that 
\[
E_k \le M(\Ecal_0) \int_0^T \norm{v\one{v\ge c_k}}_{L^\infty(\RR)} dt.
\]
Then, we perform a non-linearization as follows.
We first note that for any positive \(a<b\), 
\begin{equation}\label{NL}
v \one{v\ge b} \le \frac{b}{(b-a)^2} (\vbar^a)^2.
\end{equation}
This implies that for any \(c'\) with \(c_{k-1}<c'<c_k\), we have 
\[
\norm{v\one{v\ge c_k}}_{L^\infty(\RR)}
\le \frac{c_k}{(c_k-c')^2} \big\|\vbar^{c'}\big\|_{L^\infty(\RR)}^2,
\]
whose right-hand side can be bounded by the following pointwise estimate: for any \(t\in[0,T)\), 
\begin{equation} \label{pwest}
(\vbar^{c'})^2 (t,x)
=\Big(\vbar^{c'}(t,y)+\int_y^x \rd_z(\vbar^{c'}(t,z)) dz\Big)^2
\le \int_\RR v\one{v\ge c'}dx \int_\RR \frac{((\vbar^{c'})_x)^2}{v}dx.
\end{equation}
Here, \(y\) is chosen in the same manner as \eqref{refpt}
This together with \eqref{NL} shows that 
\[
(\vbar^{c'})^2 (t,x)
\le \frac{c'}{(c'-c_{k-1})^2} \int_\RR (\vbar^{c_{k-1}})^2 dx
\int_\RR \frac{((\vbar^{c_{k-1}})_x)^2}{v}dx.
\]
Setting \(c'=\frac{c_k+c_{k-1}}{2}\), we obtain
\[
\norm{v\one{v\ge c_k}}_{L^\infty(\RR)}
\le \frac{16c_k^2}{(c_k-c_{k-1})^4} \int_\RR (\vbar^{c_{k-1}})^2 dx
\int_\RR \frac{((\vbar^{c_{k-1}})_x)^2}{v} dx.
\]
Hence, we find that 
\[
E_k
\le \frac{M_1^* c_k^2}{(c_k-c_{k-1})^4}
\sup_{t\in[0,T)}\int_\RR (\vbar^{c_{k-1}})^2 dx
\int_0^T \int_\RR \frac{((\vbar^{c_{k-1}})_x)^2}{v} dx dt
\le \frac{M_1^* L^2}{2^{-4(k+1)} L^4} E_{k-1}^2
\]
for some constant \(M_1^*=M(\Ecal_0)\).
We set \(M_2^* \coloneqq 16(16M_1^*+1)\) and \(F_k\coloneqq E_k/L^2\) for all \(k\ge0\).\\
Then, the above inequality can be rewritten as follows: 
\begin{equation}\label{NL-v}
F_k \le (M_2^*)^k F_{k-1}^2, \qquad \forall k\ge 0.
\end{equation}
Since \(E_0\le M^*\), it follows that 
\[
F_0 = \frac{E_0}{L^2} \le \frac{M^*}{L^2} \to 0 \qquad \text{as } L \to \infty.
\]
Thus, for \(C_0 = C_0(M_2^*,2)\) in Lemma \ref{lem:DeGiorgiNum}, we choose a large enough \(L\) such that \(F_0 < C_0\).
Therefore, applying Lemma \ref{lem:DeGiorgiNum} to \eqref{NL-v}, we conclude that \(\lim_{k\to\infty} F_k =0\) and 
\[
\lim_{k\to\infty} E_k = 0.
\]
This completes the De Giorgi estimate, which gives an \(L^\infty\) bound for \(v\) as follows: 
\[
\norm{v}_{L^\infty([0,T)\times\RR)} \le L.
\]
The upper bound \(L\) depends on \(M^*\), which in turn depends on \(\norm{v_0}_{L^\infty(\RR)},\Ecal_0\) and \(T\).

\vspace{2mm}
\(\bullet\) Uniform Bounds on the Lower Bound for \(v\):

\step{1}
First of all, we note that the continuity equation is equivalent to 
\[
2\Big(\frac{1}{\sqrt{v}}\Big)_t
=-\frac{v_t}{v\sqrt{v}}
=-\frac{u_x}{v\sqrt{v}} -\frac{1}{v\sqrt{v}}\Big(\frac{v_x}{v}\Big)_x.
\]
Let \(N\ge \sqrt{2}\) be any constant with \(N>\ell_0^{-1/2}\ge\norm{1/\sqrt{v_0}}_{L^\infty(\RR)}\).
Let \(w\coloneqq 1/\sqrt{v}\) and we introduce the following notation
\[
\wbar^N \coloneqq (w-N)_+ = \Big(\frac{1}{\sqrt{v}}-N\Big)_+.
\]
Then we observe
\begin{align*}
\frac{d}{dt}\int_\RR (\wbar^N)^2 dx
+\frac{1}{2}\int_\RR \frac{(v_x)^2}{v^4}\one{v\le1/N^2}dx
&\le -\frac{3}{2}\int_\RR \frac{u v_x}{v^2\sqrt{v}} \wbar^N dx
-\frac{1}{2}\int_\RR \frac{u v_x}{v^3}\one{v\le1/N^2}dx\\
&\le 2\int_\RR \frac{1}{v^3}\abs{u v_x}\one{v\le1/N^2}dx\\
&\le \frac{1}{4}\int_\RR \frac{(v_x)^2}{v^4}\one{v\le1/N^2}dx
+4\int_\RR \frac{u^2}{v^2}\one{v\le1/N^2}dx.
\end{align*}
Thanks to \eqref{vthL1uL2}, we obtain
\begin{equation}\label{vistep1}
\frac{d}{dt}\int_\RR (\wbar^N)^2 dx
+\frac{1}{4}\int_\RR \frac{(v_x)^2}{v^4}\one{v\le1/N^2}dx
\le \norm{\frac{1}{v}\one{v\le1/N^2}}_{L^\infty(\RR)}^2  M(\Ecal_0).
\end{equation}
Similarly to \eqref{refpt}, for each \(t\in[0,T)\), we choose \(y\) such that 
\[
\wbar^N (t,y)=0.
\]
Here, we used \eqref{Qlin} and \eqref{balancelaw}.
Then, we find that for any \(t\in[0,T)\),
\begin{align*}
\Big(\frac{1}{v}-N^2\Big)_+(t,x)
&=\Big(\frac{1}{v}-N^2\Big)_+(t,y)
+\int_y^x \frac{v_x}{v^2} \one{v\le 1/N^2} dz\\
&\le \sqrt{\int_\RR \frac{(v_x)^2}{v^4}\one{v\le1/N^2} dx}
\sqrt{\int_\RR \one{v\le1/N^2}dx}.
\end{align*}
This implies that 
\[
\norm{\frac{1}{v}}_{L^\infty(\RR)}^2
\le 2N^4 + 2\int_\RR \one{v\le1/N^2}dx \int_\RR \frac{(v_x)^2}{v^4} \one{v\le1/N^2}dx.
\]
Then, using \eqref{vistep1} and \eqref{balancelaw}, we obtain
\begin{align*}
&\frac{d}{dt}\int_\RR (\wbar^N)^2 dx
+\frac{1}{4}\int_\RR \frac{(v_x)^2}{v^4}\one{v\le1/N^2}dx\\
&\qquad\qquad\le M(N,\Ecal_0)
+M(\Ecal_0)\int_\RR \one{v\le1/N^2}dx \int_\RR \frac{(v_x)^2}{v^4} \one{v\le1/N^2}dx\\
&\qquad\qquad\le M(N,\Ecal_0) + \frac{M(\Ecal_0)}{\Phi(1/N^2)}\int_\RR \Phi(v)dx \int_\RR \frac{(v_x)^2}{v^4} \one{v\le1/N^2}dx.
\end{align*}
We apply \eqref{balancelaw} once again to find that 
\[
\frac{d}{dt}\int_\RR (\wbar^N)^2 dx
+\frac{1}{4}\int_\RR \frac{(v_x)^2}{v^4}\one{v\le1/N^2}dx
\le M(N,\Ecal_0) + \frac{M(\Ecal_0)}{\Phi(1/N^2)} \int_\RR \frac{(v_x)^2}{v^4} \one{v\le1/N^2}dx.
\]
Since the function \(\Phi\) is decreasing on \((0,1)\) and \(\Phi(1/N^2) \to \infty\) as \(N\to\infty\), we choose \(N_0\) large enough such that for any \(N\ge N_0\), the following holds:
\[
\frac{d}{dt}\int_\RR (\wbar^N)^2 dx
+\frac{1}{8}\int_\RR \frac{(v_x)^2}{v^4}\one{v\le1/N^2}dx
\le M(N,\Ecal_0).
\]
We take integration with respect to \(t\) and conclude that when \(N > \max\{N_0,\ell_0^{-1/2},\sqrt{2}\}\), 
\[
\sup_{t\in[0,T)} \int_\RR (\wbar^N)^2dx
+\int_0^T \int_\RR \frac{(v_x)^2}{v^4} \one{v\le1/N^2} dx dt
\le M(N,\Ecal_0,T).
\]

Again, we fix \(N > \max\{N_0,\ell_0^{-1/2},\sqrt{2}\}\), and set \(M^*\) to be the RHS above with this \(N\).
Let \(L\) be any constant with \(L>2N\) and we define 
\[
c_k \coloneqq L(1-2^{-k-1}), \qquad \forall k\in \mathbb{N}\cup\{0\},
\]
which satisfies \(L > c_{k+1} > c_k \ge c_0 = L/2 > N\) for every \(k\ge0\) and \(\lim_{k\to\infty} c_k=L\).

\vspace{2mm}
Finally, for each \(k\ge0\), we define the following energy function 
\[
E_k \coloneqq
\sup_{t\in[0,T)} \int_\RR (\wbar^{c_k})^2 dx
+\int_0^T \int_\RR \frac{(v_x)^2}{v^4} \one{v\le1/c_k^2} dx dt.
\]
Then, from \(c_0=L/2>N\), it follows that \(E_0\le M^*\).
Therefore, \(E_k\) is well defined.

\step{2} 
Likewise, we establish an iteration scheme.
To this end, using \eqref{vistep1}, we get
\[
\frac{d}{dt}\int_\RR (\wbar^{c_k})^2 dx
+\frac{1}{4}\int_\RR \frac{(v_x)^2}{v^4}\one{v\le1/c_k^2}dx
\le \norm{\frac{1}{v}\one{v\le1/c_k^2}}_{L^\infty(\RR)}^2  M(\Ecal_0).
\]
Then, the following nonlinearization inequality 
\[
\frac{1}{\sqrt{v}} \one{v\le1/c_k^2}
\le \frac{c_k}{(c_k-c_{k-1})^{3/2}} (\wbar^{c_{k-1}})^{3/2}
\]
yields that 
\[
\frac{d}{dt}\int_\RR (\wbar^{c_k})^2 dx
+\frac{1}{4}\int_\RR \frac{(v_x)^2}{v^4}\one{v\le1/c_k^2}dx
\le \frac{M(\Ecal_0) c_k^4}{(c_k-c_{k-1})^6} \big\|\wbar^{c_{k-1}}\big\|_{L^\infty(\RR)}^6.
\]
We take integration in \(t\) to obtain
\[
E_k \le \frac{M(\Ecal_0) c_k^4}{(c_k-c_{k-1})^6} \int_0^T \big\|\wbar^{c_{k-1}}\big\|_{L^\infty(\RR)}^6 dt.
\]
To establish the desired nonlinear recursive structure, we need the following pointwise estimate: using the same argument in \eqref{pwest}, we find that
\[
\norm{\wbar^c}_{L^\infty(\RR)}^3
\le \int_\RR 3(\wbar^c)^2 \Big|-\frac{v_x}{2v\sqrt{v}}\Big| \one{v\le 1/c^2}dx
\le C\sqrt{\int_\RR \frac{(v_x)^2}{v^4}\one{v\le 1/c^2}dx} \sqrt{\int_\RR (\wbar^c)^2 dx}.
\]
Hence, we obtain 
\begin{align*}
E_k &\le \frac{M_1^* c_k^4}{(c_k-c_{k-1})^6}
\sup_{t\in[0,T)}\int_\RR (\wbar^{c_{k-1}})^2 dx
\int_0^T \int_\RR \frac{(v_x)^2}{v^4}\one{v\le 1/c_{k-1}^2}dx dt \\
&\le \frac{M_1^* c_k^4}{(c_k-c_{k-1})^6} E_{k-1}^2
\le \frac{M_1^* L^4}{2^{-6(k+1)}L^6} E_{k-1}^2.
\end{align*}
for some \(M_1^*=M(\Ecal_0)\).
Let \(F_k \coloneqq E_k/L^2\) for each \(k\ge0\) and let \(M_2^*\coloneqq 64(64M_1^*+1)\).\\
Then, it follows that 
\[
F_k \le (M_2^*)^k F_{k-1}^2, \qquad \forall k\ge1.
\]
Following the same argument as for the upper bound, Lemma \ref{lem:DeGiorgiNum} yields the lower bound:
\[
\norm{1/v}_{L^\infty([0,T)\times\RR)} \le L.
\]
The lower bound \(L\) depends on \(M^*\), which itself depends on \(\ell_0^{-1},\Ecal_0\) and \(T\).
\end{proof}

The upper and strictly positive lower bounds for the specific volume \(v\) are now available.
With these bounds, we can obtain a pointwise bound for the temperature \(\th\) via the maximum principle, which in particular yields the lower bound for \(\th\) in \eqref{est:lbd}.

\begin{lem}[Uniform Bound on \(1/\th\)]\label{lem:Max}
Let \(U=(v,u,\th)\) be a solution of \eqref{bnsfsys} on \([0,T)\) for some \(T>0\) subject to the initial data \(U_0\coloneqq(v_0,u_0,\th_0)\) satisfying
\[
\norm{U_0}_0 \coloneqq \norm{v_0, u_0, \th_0}_0 < \infty, \qquad
\ell_0 \coloneqq \min \{\inf_{x\in\RR} v_0(x), \inf_{x\in\RR}\th_0(x)\} > 0.
\]
If the solution \(U=(v,u,\th)\) satisfies
\[
(v-1, u, \th-1) \in \big(C([0,t];H^3(\RR)) \cap L^2(0,t;H^4(\RR))\big)^3, \qquad \forall t\in(0,T),
\]
and \(v\) and \(\th\) do not vanish, the following uniform bound for \(t\in [0, T)\):
\begin{equation}\label{thlb}
\sup_{t\in [0, T)} \norm{1/\th}_{L^\infty(\RR)}
\le M(\Ecal_0, \ell_0^{-1}, T).
\end{equation}
\end{lem}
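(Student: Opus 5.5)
We bound $1/\th$ with a comparison (minimum principle) argument on the temperature equation, using the two-sided bounds on $v$ from Lemma \ref{lem:DeGiorgi}. With $\t=\m=\k=1$, set $c_v\coloneqq R/(\g-1)$. The temperature equation reads
\[
c_v\th_t - \Big(\frac{\th_x}{v}\Big)_x = \frac{(u_x)^2}{v} - \frac{R\th}{v}u_x .
\]
The key observation is that the right-hand side is bounded below by completing the square:
\[
\frac{(u_x)^2}{v} - \frac{R\th}{v}u_x \ge -\frac{R^2\th^2}{4v}.
\]
Dividing by $\th^2$ and setting $w\coloneqq 1/\th$ linearizes this quadratic term. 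Since $w_t=-\th_t/\th^2$ and
\[
\Big(\frac{w_x}{v}\Big)_x = -\frac{1}{\th^2}\Big(\frac{\th_x}{v}\Big)_x + \frac{2(\th_x)^2}{v\th^3},
\]
we obtain
\[
c_v w_t - \Big(\frac{w_x}{v}\Big)_x = -\frac{2v\,(w_x)^2}{w} \cdot\frac{1}{v^2}\cdot\frac{w^3}{w^2}\,(\text{a nonpositive gradient term}) - \frac{1}{\th^2}\Big(\frac{(u_x)^2}{v}-\frac{R\th}{v}u_x\Big) \le \frac{R^2}{4v}.
\]
More precisely, $c_v w_t - (w_x/v)_x + 2(w_x)^2/(vw) \le R^2/(4v)$. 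By Lemma \ref{lem:DeGiorgi}, $1/v\le M(\Ecal_0,\ell_0^{-1},T)$, so $w$ is a subsolution:
\[
c_v w_t - \Big(\frac{w_x}{v}\Big)_x \le M_1 \qquad\text{with } M_1 = M(\Ecal_0,\ell_0^{-1},T).
\]

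Next we compare $w$ with the spatially constant supersolution $\bar w(t)=\ell_0^{-1}+M_1t/c_v$. The difference $z\coloneqq w-\bar w$ satisfies $c_v z_t - (z_x/v)_x\le 0$ with $z(0)\le 0$.

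We run the maximum principle in energy form, which is how the cutoff framework of the paper handles unbounded domains. Multiply by $z_+$ and integrate:
\[
\frac{c_v}{2}\frac{d}{dt}\int z_+^2\,dx + \int \frac{(\rd_x z_+)^2}{v}\,dx\le 0 .
\]
This requires $z_+\in L^2$. That holds because $\th-1\in H^3\subset W^{1,\infty}$ with $\th\to1$ at infinity, and for $t<T$ the temperature $\th$ is continuous and positive. Hence $w-1\in H^1$ on each $[0,t]$, and $z_+$ is supported where $w\ge\bar w\ge \ell_0^{-1}$. If $\ell_0\ge1$, the claim follows even more directly, since $w\le \bar w$ trivially at infinity.

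The $H^3$ regularity assumed in the lemma ensures that all terms ($\th_t$, $(\th_x/v)_x$, $u_x\in L^\infty$) are continuous and bounded, so the computation is classical. The cutoff $\phi_R$ boundary terms vanish as $R\to\infty$. Therefore $z_+\equiv0$, which gives
\[
\sup_{t<T}\norm{1/\th}_{L^\infty}\le \ell_0^{-1}+\frac{M_1T}{c_v}.
\]

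The main obstacle is rigor rather than structure. We must verify that $w=1/\th$ is regular enough for the energy comparison, since a priori $1/\th$ is only locally bounded on $[0,t]$. This is settled by the hypothesis $\th^{-1}\in L^\infty(0,t;L^\infty)$ implicit in ``$\th$ does not vanish'' together with $H^3$ continuity. We must also confirm that the gradient term has the favorable sign, so that it can be dropped.
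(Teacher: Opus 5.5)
Your proof is correct, but it takes a different route from the paper.

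The paper works directly with $\theta_m(t)=\inf_x\theta(t,x)$. It shows that $\theta_m$ is Lipschitz and identifies $\theta_m'(t)=\theta_t(t,x_t)$ at a minimum point $x_t$ by one-sided difference quotients; such a point exists when $\theta_m(t)<1$ because $\theta\to1$ at infinity. It then uses $\theta_x(t,x_t)=0$, $\theta_{xx}(t,x_t)\ge0$ and the same completed square to get $(1/\theta_m)'\le\frac{(\gamma-1)R}{4}\norm{1/v}_{L^\infty}$, treating the regime $\theta_m\ge1$ separately via $\max(1/\theta_m,1)$.

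You instead pass to $w=1/\theta$. There the quadratic loss becomes the bounded source $R^2/(4v)$ plus the good-sign term $2(w_x)^2/(vw)$, and you close with a Stampacchia-type weak comparison on $(w-\bar w)_+$. The resulting bound $\ell_0^{-1}+\frac{(\gamma-1)R}{4}T\norm{1/v}_{L^\infty}$ is essentially the paper's.

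What each route buys:
\begin{itemize}
\item The paper's argument is an elementary pointwise ODE in the Kazhikhov--Shelukhin spirit. Its cost is the need for pointwise second derivatives (the paper's stated reason for assuming $k\ge3$) and a somewhat delicate justification of the derivative of an infimum over $\RR$.
\item Your argument replaces attainment of the minimum by integrability. It uses the equation only in $L^2$ form, so it does not rely on $H^3$ in an essential way.
\end{itemize}

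The integrability you need holds automatically. Since $\theta_0\to1$ at infinity, $\ell_0\le1$, so $\bar w\ge1$ and
\[
0\le z_+\le(w-1)_+\le\norm{1/\theta}_{L^\infty}|\theta-1|\in L^2 .
\]
For $t>0$ one has $\bar w(t)>1$, so $z_+(t,\cdot)$ is compactly supported. This disposes of the term $-\frac{M_1}{c_v}\int z_+\,dx$ and of the cutoff boundary terms. Boundedness of $1/\theta$ on $[0,t]\times\RR$ is not literally part of ``$\theta$ does not vanish''. It follows instead from positivity, continuity, and the uniform decay of $\theta(s)-1$ for $s\le t$ coming from $\theta-1\in C([0,t];H^1)$.

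Minor slips:
\begin{itemize}
\item Your first display for the $w$-equation is garbled: the product written there equals $-2(w_x)^2/v$. The inequality you actually use, $c_vw_t-(w_x/v)_x+2(w_x)^2/(vw)\le R^2/(4v)$, is correct.
\item Your separate remark about the case $\ell_0\ge1$ is superfluous, since $\ell_0\le1$ always.
\end{itemize}
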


\begin{proof}
Let \(T_0\in(0,T)\) be any fixed constant.
To begin with, since
\[
\th \in C([0,T_0];H^3(\RR)) \cap L^2(0,T_0;H^4(\RR)), \qquad
\th_t \in C([0,T_0]; H^1(\RR)) \subset C([0,T_0]\times\RR),
\]
we have \(\th \in C^1([0,T_0]\times \RR)\).
Thus, we define a function \(\th_m\colon [0,T_0] \to \RR^+\) by 
\[
\th_m (t) = \inf_{x\in \RR} \th(t, x), 
\]
which is well defined and Lipschitz continuous; hence, by Rademacher's theorem, it is differentiable for almost every \(t\).
Let \(t\in [0, T_0]\) be the time that \(\th_m(t)<1\), then there exists \(x_t\in\RR\) such that \(\th_m(t)=\th(t,x_t)\).
We then claim that if \(\th_m'(t)\) exists, the following holds: 
\[
\th_m'(t) = \th_t(t, x_t). 
\]
This can be verified by following \cite{CDNP-20} for \(\mathbb{T}\) case; see also \cite{KV-JNS20} for \(\RR\) case.
If \(\th_m\) is differentiable at \(t\in(0,T)\), we have
\begin{align*}
\th_m'(t)
&= \lim_{h\to 0+} \frac{\th_m(t+h)-\th_m(t)}{h} \\
&= \lim_{h\to 0+} \frac{\th(t+h, x_{t+h})-\th(t, x_t)}{h} \\
&\le \lim_{h\to 0+} \frac{\th(t+h, x_t)-\th(t, x_t)}{h} = \th_t(t, x_t).
\end{align*}
On the other hand, the following holds:
\begin{align*}
\th_m'(t)
&= \lim_{h\to 0+} \frac{\th_m(t)-\th_m(t-h)}{h} \\
&= \lim_{h\to 0+} \frac{\th(t, x_t)-\th(t-h, x_{t-h})}{h} \\
&\ge \lim_{h\to 0+} \frac{\th(t, x_t)-\th(t-h, x_t)}{h} = \th_t(t, x_t).
\end{align*}
Thus, \(\th_m'(t)=\th_t(t,x_t)\) provided that \(\th_m'(t)\) exists.
Note that although \(x_{x\pm h}\) may be equal to \(\pm\infty\), the above inequalities still hold.
Hence, the energy (or the temperature) equation \eqref{bnsfsys} yields that for almost every \(t\),
\[
\th_m'(t)
= \th_t(t, x_t) 
= \frac{\g-1}{R}\Big(\Big(\frac{\th_x}{v}\Big)_x + \frac{(u_x)^2}{v} - \frac{R\th}{v}u_x\Big)(t, x_t).
\]
Motivated by \cite{KS-77}, using a perfect square form, we find that 
\[
\th_m'(t) \ge -\frac{\g-1}{4}\frac{R\th^2}{v}(t, x_t) + \frac{\g-1}{R} \Big(\frac{\th_x}{v}\Big)_x (t, x_t).
\]
Moreover, since \(\th_{xx}(t, x_t) \ge 0\) and \(\th_x(t, x_t)=0\), we apply \eqref{vublb} to obtain 
\[
\th_m'(t)
\ge -\frac{\g-1}{4}\frac{R\th^2}{v}(t, x_t)
\ge -M(\Ecal_0, \ell_0^{-1}, T) \th_m^2(t).
\]
This implies that 
\[
\Big(\frac{1}{\th_m(t)}\Big)' \le M(\Ecal_0, \ell_0^{-1}, T).
\]
Therefore, including the case when \(\th_m (t)\ge 1\), we have 
\[
\Big(\max\big(\frac{1}{\th_m(t)}, 1\big)\Big)' \le M(\Ecal_0, \ell_0^{-1}, T).
\]
Integrating over time, we obtain the desired bound on \(1/\th\): for any \(t\in[0,T)\),
\[
\th_m(t) \ge \frac{1}{M(\Ecal_0, \ell_0^{-1}, T)T + \max\big(\frac{1}{\th_m(0)},1\big)} \ge \frac{1}{M(\Ecal_0, \ell_0^{-1}, T)}.
\]
This completes the proof.
\end{proof}

\subsection{Higher Integrability and its Consequences} \label{subsec:ControlE}
Thanks to the results of the previous subsection, we can now remove some of the priorly imposed weights in the \(L^2_t H^1_x\) bounds in \eqref{weightedH1}.
However, this is still insufficient for deriving energy estimates for higher-order derivatives.
To address this, we require a certain level of higher integrability for the velocity and temperature, which allow us to effectively handle the nonlinear structure of the equations.
Note that, from this point on, we exploit time-integrated estimates, which play a crucial role in the remainder of the argument.

\begin{lem}\label{th1infty}
Under the same assumption as Lemma \ref{lem:balancelaw}, the following holds:
\[
\norm{\th}_{L^1(0, T; L^\infty(\RR))}
= \int_0^T \norm{\th(t)}_{L_x^\infty(\RR)} dt
\le M(\norm{v_0}_{L^\infty(\RR)}, \Ecal_0, T).
\]
\end{lem}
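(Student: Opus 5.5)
The plan is the standard one-dimensional pointwise argument: bound $\th(t,x)$ by its value at a reference point plus the integral of $\th_x$, and control that integral by the weighted dissipation in \eqref{weightedH1}, with the weights removed using Lemma \ref{lem:DeGiorgi}.

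First, for each fixed $t\in[0,T)$ we choose a reference point $y=y(t)$ with $\th(t,y)\le 2$. Such a point exists because otherwise $\th\ge2$ on all of $\RR$, and then \eqref{Qlin} would make $\int_\RR\Phi(\th)\,dx$ infinite, contradicting \eqref{balancelaw}. This is the same argument as in \eqref{refpt}.

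Second, we write $\th(t,x)=\th(t,y)+\int_y^x\th_z\,dz$ and split the integrand as $\th_x=\frac{\th_x}{\sqrt{v}\,\th}\cdot\sqrt{v}\,\th$. On the set $\{\th\le 2\}$ the values are already bounded, so only the excursion above level $2$ matters. The cleanest way to handle this is to work with $(\th-2)_+$. Then $\norm{(\th-2)_+}_{L^\infty}$ is bounded by
\[
\int_\RR |\th_x|\one{\th\ge2}dx\le \Big(\int_\RR\frac{(\th_x)^2}{v\th^2}dx\Big)^{1/2}\Big(\int_\RR v\th^2\one{\th\ge2}dx\Big)^{1/2}.
\]

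Third, we estimate the second factor by
\[
\int_\RR v\th^2\one{\th\ge2}\,dx\le \norm{v}_{L^\infty}\,\norm{\th}_{L^\infty}\,\norm{\th\one{\th\ge2}}_{L^1}.
\]
Here $\norm{v}_{L^\infty}\le M(\norm{v_0}_{L^\infty},\Ecal_0,T)$ by Lemma \ref{lem:DeGiorgi}, and $\norm{\th\one{\th\ge2}}_{L^1}\le C\Ecal_0$ by \eqref{vthL1uL2}. Setting $X(t)=\norm{\th(t)}_{L^\infty}$, this gives
\[
X(t)\le 2+M\,\Big(\int_\RR\frac{(\th_x)^2}{v\th^2}dx\Big)^{1/2}X(t)^{1/2}.
\]
Young's inequality absorbs $X^{1/2}$ and yields $X(t)\le M\big(1+\int_\RR\frac{(\th_x)^2}{v\th^2}dx\big)$. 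Integrating over $[0,T)$ and using \eqref{weightedH1} then gives $\int_0^T X\,dt\le M(\norm{v_0}_{L^\infty},\Ecal_0,T)$, which is the claim.

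Step three is the main point to watch: the dissipation carries the degenerate weight $1/(v\th^2)$, and the whole argument relies on the fact that this weight produces exactly a factor $\th^2$ that can be split between the $L^1$ bound on $\{\th\ge2\}$ and the unknown sup norm. Only the upper bound on $v$ is needed here, which matches the stated dependence. We also need $X(t)<\infty$ for each $t$ to justify the absorption; this holds since $\th-1\in H^1$.
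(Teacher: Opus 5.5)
Your proof is correct and follows essentially the same argument as the paper. Both use a reference point where $\th\le 2$, Cauchy--Schwarz of $\int_\RR|\th_x|\one{\th\ge2}\,dx$ against the weighted dissipation, the upper bound on $v$ from Lemma \ref{lem:DeGiorgi} together with \eqref{vthL1uL2}, and integration in time using \eqref{weightedH1}.

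The one difference is that the paper runs the pointwise estimate on $\sqrt{\th}$ rather than on $(\th-2)_+$. Since $|(\sqrt{\th})_x| = \frac{|\th_x|}{\sqrt{v}\,\th}\cdot\frac{\sqrt{v\th}}{2}$, this leaves exactly $\int_\RR v\th\,\one{\th\ge2}\,dx$. The paper therefore needs neither your Young absorption nor the a priori finiteness of $\norm{\th(t)}_{L^\infty(\RR)}$, both of which, as you note, are available in your version.
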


\begin{proof}
As in \eqref{pwest}, using \((\sqrt{\th})_x = \th_x / (2\sqrt{\th})\), we obtain
\begin{align*}
\|\sqrt{\th}\|_{L^\infty(\RR)}
&\le \sqrt{2} + \|\sqrt{\th} \one{\th\ge2}\|_{L^\infty(\RR)}
\le \sqrt{2} + \int_\RR \frac{\abs{\th_x}}{2\sqrt{\th}} \one{\th\ge2} dx\\
&\le \sqrt{2} + \sqrt{\int_\RR \frac{(\th_x)^2}{v\th^2}dx} \sqrt{\int_\RR v\th \one{\th\ge2} dx}.
\end{align*}
Then, using \eqref{vublb} and \eqref{vthL1uL2}, we find that 
\[
\|\sqrt{\th}\|_{L^\infty(\RR)}
\le \sqrt{2} + M(\norm{v_0}_{L^\infty(\RR)}, \Ecal_0, T) \sqrt{\int_\RR \frac{(\th_x)^2}{v\th^2}dx}.
\]
Squaring both sides and integrating over time, we get
\begin{align*}
\int_0^T \norm{\th}_{L^\infty(\RR)} dt
&\le M(\norm{v_0}_{L^\infty(\RR)}, \Ecal_0, T)
\Big(1 + \int_0^T\int_\RR \frac{(\th_x)^2}{v\th^2} dx dt \Big) \\
&\le M(\norm{v_0}_{L^\infty(\RR)}, \Ecal_0, T).
\end{align*}
Here, \eqref{weightedH1} was used.
This completes the proof.
\end{proof}

Then, we present the following lemma on higher integrabilities.
\begin{lem}[Higher Integrability of \(u\) and \(\th\)]\label{uthhigher}
Let \(U=(v,u,\th)\) be a solution of the system \eqref{bnsfsys} on \([0, T)\) subject to the initial data \((v_0,u_0,\th_0)\) with \(\norm{U_0}_1=\norm{v_0,u_0,\th_0}_1 < \infty\)
and 
\[
\ell_0 = \min \{\inf_{x\in\RR} v_0(x), \inf_{x\in\RR}\th_0(x)\} > 0.
\]
If \(U=(v,u,\th)\) satisfies 
\[
(v-1,u,\th-1) \in \big(C([0,T];H^1(\RR)) \cap L^2(0,T;H^2(\RR))\big)^3,
\]
and \(v\) and \(\th\) do not vanish, the following inequality holds for \(t\in [0,T)\):
\begin{equation}\label{uthL4L2}
\sup_{t\in [0, T)} \big(\norm{u}_{L^4(\RR)} + \norm{\th-1}_{L^2(\RR)}\big)
\le M(\norm{U_0}_1, \ell_0^{-1}, T),
\end{equation}
and the following integrability in time holds:
\begin{equation}\label{uthL2H1}
\int_0^T \norm{uu_x}_{L^2(\RR)}^2 + \norm{\th_x}_{L^2(\RR)}^2 dt
\le M(\norm{U_0}_1, \ell_0^{-1}, T).
\end{equation}
\end{lem}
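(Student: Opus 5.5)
We prove \eqref{uthL4L2} and \eqref{uthL2H1} together through one combined energy estimate: an $L^4$ estimate for $u$ and an $L^2$ estimate for $\th-1$. Throughout, Lemma~\ref{lem:DeGiorgi} and Lemma~\ref{lem:Max} let us treat $v$, $1/v$ and $1/\th$ as bounded by $M(\norm{U_0}_1,\ell_0^{-1},T)$. The $H^1$ assumption on the data gives $\norm{v_0}_{L^\infty}\le C\norm{U_0}_1$. It also gives $\Ecal_0\le M(\norm{U_0}_1,\ell_0^{-1})$, since $\Phi(z)\le C(\ell_0)|z-1|^2$ for $z\ge\ell_0$ and $z$ bounded.

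\textbf{Step 1 ($L^4$ for $u$).} Multiply the momentum equation by $u^3$ and integrate by parts:
\[
\frac14\frac{d}{dt}\int u^4\,dx + 3\int \frac{u^2u_x^2}{v}\,dx = 3\int \frac{R\th}{v}\,u^2u_x\,dx .
\]
We split $\th=(\th-1)+1$. The part coming from $1$ is
\[
3\int \frac{R}{v}u^2u_x\,dx \le \frac14\int \frac{u^2u_x^2}{v}\,dx + C\int u^2\,dx,
\]
using the boundedness of $v$ and $1/v$; this is controlled by \eqref{vthL1uL2}. The part coming from $\th-1$ is absorbed by Young's inequality, leaving
\[
C\int (\th-1)^2u^2\,dx \le C\norm{u}_{L^\infty}^2\norm{\th-1}_{L^2}^2 .
\]
In 1D we have $\norm{u}_{L^\infty}^2\le \norm{u}_{L^2}\norm{u_x}_{L^2}\le C\norm{u_x}_{L^2}$. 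Moreover, since $\th\le\norm{\th}_{L^\infty}$ and $v$ is bounded, $\norm{u_x}_{L^2}^2\le C\norm{\th}_{L^\infty}\int \frac{u_x^2}{v\th}\,dx$. The resulting weights are all integrable in time by Lemma~\ref{th1infty} and \eqref{weightedH1}, so a Gronwall argument will work once $\norm{\th-1}_{L^2}^2$ is placed inside the same functional.

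\textbf{Step 2 ($L^2$ for $\th-1$).} Multiply the temperature equation by $\th-1$:
\[
\frac{R}{2(\g-1)}\frac{d}{dt}\int(\th-1)^2\,dx + \int\frac{\th_x^2}{v}\,dx = \int (\th-1)\Big(\frac{u_x^2}{v}-\frac{R\th}{v}u_x\Big)dx .
\]
The left-hand side gives the $\th_x$ part of \eqref{uthL2H1}, because $v$ is bounded.

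The pressure term is handled as follows. Since $\th=(\th-1)+1$, we have $\th(\th-1)\le 2(\th-1)^2+C$, and hence
\[
\int \frac{R\th}{v}(\th-1)|u_x|\,dx \le C\big(1+\norm{\th}_{L^\infty}\big)\int(\th-1)^2\,dx + C\int \frac{u_x^2}{\th}\,dx .
\]
The last integral is controlled by \eqref{weightedH1}. The prefactor $1+\norm{\th}_{L^\infty}$ is integrable in time by Lemma~\ref{th1infty}, so this term is a Gronwall term.

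The viscous heating term $\int(\th-1)u_x^2/v$ is the main obstacle, since it is cubic. My first attempt bounds it by
\[
\norm{\th-1}_{L^\infty}\int \frac{u_x^2}{v}\,dx .
\]
For the first factor, we use the 1D interpolation $\norm{\th-1}_{L^\infty}\le \norm{\th-1}_{L^2}^{1/2}\norm{\th_x}_{L^2}^{1/2}$. For the second factor, we need $\int_0^T\!\int u_x^2\,dx\,dt\le M$. Getting this requires the standard $H^1$ estimate for $u$: multiply the momentum equation by $-u_{xx}$, which gives
\[
\frac{d}{dt}\norm{u_x}_{L^2}^2+\norm{u_{xx}}_{L^2}^2\lesssim \norm{p_x}_{L^2}^2+\norm{u_x v_x}_{L^2}^2 .
\]
Carrying this out needs $\norm{v_x}_{L^2}$ uniformly in time, which in turn comes from the $H^1$ estimate for the continuity equation; there we multiply by $-v_{xx}$, which is possible since $\norm{U_0}_1<\infty$.

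Accordingly, the plan is to close a single Gronwall inequality for
\[
Y(t)=\int \Big(u^4+(\th-1)^2+v_x^2+u_x^2\Big)dx ,
\]
with time-integrable coefficients built from $\norm{\th}_{L^\infty}$ and the dissipation quantities in \eqref{weightedH1}. Any remaining cubic terms are absorbed using the dissipations $\norm{u_{xx}}_{L^2}^2$, $\norm{v_{xx}}_{L^2}^2$, $\norm{\th_x}_{L^2}^2$ and $\int u^2u_x^2/v$, together with Gagliardo--Nirenberg inequalities. Combining Steps 1 and 2 then yields \eqref{uthL4L2} and \eqref{uthL2H1}. The quadratic-in-$\th$ heating and pressure interactions are the hardest part: the key is that each time a factor $\norm{\th}_{L^\infty}$ appears, it must pair with a bounded energy quantity.
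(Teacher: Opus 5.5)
There is a genuine gap, and it is in Step 2. Estimating $\theta-1$ directly from the temperature equation leaves two terms: the pressure work $\int \frac{R\theta}{v}(\theta-1)u_x\,dx$ and the cubic viscous heating $\int(\theta-1)\frac{u_x^2}{v}\,dx$. Neither can be closed with what is available at this stage, namely $\|\theta\|_{L^\infty}\in L^1_t$ (Lemma \ref{th1infty}) and the weighted dissipation \eqref{weightedH1}.

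Your bound for the pressure term is false. Young's inequality against $u_x^2/\theta$ produces $\theta^3(\theta-1)^2$, i.e.\ a factor $\|\theta\|_{L^\infty}^3$, not $1+\|\theta\|_{L^\infty}$. To see this, take $\theta\approx L$ and $u_x\approx L^2$ on a short interval: the left side scales like $L^4$, the right side like $L^3$.

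Your fallback, a single Gronwall inequality for $Y=\int u^4+(\theta-1)^2+v_x^2+u_x^2$, is asserted rather than shown, and the natural estimates are superlinear:
\begin{itemize}
\item In the $H^1$ estimate for $u$, the term $\int u_{xx}p_x$ contains $\int\theta^2v_x^2\le\|\theta\|_{L^\infty}^2\|v_x\|_{L^2}^2$. The factor $\|\theta\|_{L^\infty}^2$ is not known to be time-integrable. Interpolating via $\|\theta-1\|_{L^\infty}^2\le C\|\theta-1\|_{L^2}\|\theta_x\|_{L^2}$ leaves $\|\theta-1\|_{L^2}^2\|v_x\|_{L^2}^4\sim Y^3$.
\item Likewise $\int u_x^2v_x^2$ produces $\|u_x\|_{L^2}^2\|v_x\|_{L^2}^4$.
\item Multiplying the continuity equation by $-v_{xx}$ creates $\int v_{xx}v_x^2/v^2$, hence a $\|v_x\|_{L^2}^6$ term.
\end{itemize}
In the paper, $\int_0^T\!\int u_x^2$, $\sup_t\|v_x\|_{L^2}$ and $\|\theta\|_{L^2_tL^\infty_x}$ are all obtained only after, and from, this lemma (Lemmas \ref{lem:uL2H1}, \ref{lem:vxLinfL2}, \ref{lem:Hk}). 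So your plan feeds consequences of the statement into its own proof. Separately, Lemma \ref{lem:Max} assumes $H^3$ regularity, which is not part of the hypotheses here; you do not actually need the bound on $1/\theta$.

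The missing idea is to estimate the total energy $E=\frac{R}{\gamma-1}\theta+\frac{u^2}{2}$ instead of $\theta$. Its equation is in conservation form, $E_t=\big(-pu+\frac{\theta_x}{v}+\frac{uu_x}{v}\big)_x$. Both viscous heating and pressure work therefore become fluxes and drop out of the $L^2$ estimate. Writing $\theta_x=\frac{\gamma-1}{R}(E_x-uu_x)$, one gets
\[
\frac{d}{dt}\frac12\int\Big(E-\frac{R}{\gamma-1}\Big)^2dx\le-\frac{\gamma-1}{2R}\int\frac{E_x^2}{v}dx+C\int\frac{u^2\theta^2}{v}dx+C\int\frac{(uu_x)^2}{v}dx .
\]
The last term is absorbed by adding a large multiple of your Step 1 estimate, whose dissipation is $\int (uu_x)^2/v$.

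For the middle term, note that $u^2\theta^2\le\|\theta\|_{L^\infty}\big(u^4+(\theta-1)^2+u^2\big)$ pointwise. Also $\int u^4+(\theta-1)^2$ is controlled by $\int\frac12(E-\frac{R}{\gamma-1})^2+\frac{M}{4}u^4$. Hence Gronwall closes linearly with the $L^1_t$ coefficient $1+\|\theta\|_{L^\infty}$. The bound \eqref{uthL2H1} then follows from the dissipations of $E_x$ and $uu_x$.

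Your Step 1 is sound. Your handling of $\int(\theta-1)^2u^2$ through $\|u\|_{L^\infty}^2\le\|u\|_{L^2}\|u_x\|_{L^2}$ and $\|u_x\|_{L^2}\le C\big(\|\theta\|_{L^\infty}+\int\frac{u_x^2}{v\theta}\big)$ works. It is Step 2 that has to be replaced by the total-energy estimate.
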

\begin{proof}
First of all, we recall that the total energy \(E\) is given by \(E=\frac{R}{\g-1}\th+\frac{u^2}{2}\), and satisfies the following equation (see \eqref{bnsf0}): 
\[
E_t = \Big(-pu + \frac{\th_x}{v} + \frac{uu_x}{v}\Big)_x.
\]
Notice that due to the far-field condition on the initial data, in particular \(\th_0 \to 1\), it is natural to consider the quantity \(E-\frac{R}{\g-1}=\frac{R}{\g-1}(\th-1)+\frac{u^2}{2}\).
Thus, we proceed as follows:
\begin{align*}
\frac{d}{dt}\frac{1}{2}\int_\RR \Big(E-\frac{R}{\g-1}\Big)^2 dx
&=-\int_\RR E_x \Big(-pu + \frac{\th_x}{v} + \frac{uu_x}{v}\Big) dx\\
&\le -\frac{\g-1}{R} \int_\RR \frac{(E_x)^2}{v} dx
+C\int_\RR \abs{E_x} \abs{pu} + \abs{E_x} \frac{\abs{uu_x}}{v} dx\\
&\le -\frac{\g-1}{2R} \int_\RR \frac{(E_x)^2}{v} dx
+C\int_\RR \frac{u^2 \th^2}{v} dx + C\int_\RR \frac{(uu_x)^2}{v} dx.
\end{align*}
To offset the \((uu_x)^2\) term, we observe that
\begin{align*}
\frac{d}{dt}\frac{1}{4}\int_\RR u^4 dx
&=-\int_\RR u^3 p_x dx -3\int_\RR \frac{(uu_x)^2}{v} dx
=3\int_\RR u^2 u_x p dx -3\int_\RR \frac{(uu_x)^2}{v} dx\\
&\le -\int_\RR \frac{(uu_x)^2}{v} dx + C\int_\RR \frac{u^2\th^2}{v}dx.
\end{align*}
Then, following the idea of \cite{KS-77}, we consider \(\frac{1}{2}\big(E-\frac{R}{\g-1}\big)^2 + \frac{M}{4}u^4\) with sufficiently large \(M\) so that the above \((uu_x)^2\) term can be canceled.
To be precise, we have 
\begin{align*}
\frac{d}{dt}\int_\RR \frac{1}{2}\Big(E-\frac{R}{\g-1}\Big)^2 + \frac{M}{4}u^4 dx
+\frac{\g-1}{2R} \int_\RR \frac{(E_x)^2}{v} dx
+M_1^* \int_\RR \frac{(uu_x)^2}{v} dx
\le C\int_\RR \frac{u^2 \th^2}{v} dx
\end{align*}
for some constant \(M_1^*>0\).
Note that while we assume \(\m=\k=1\) without loss of generality, the constant \(M\) depends on \(\m\) and \(\k\); nevertheless, the argument remains the same.
To use Gr\"onwall's inequality, we need to control the right-hand side as follows.
By \eqref{vublb}, we obtain 
\begin{equation}\begin{aligned}\label{u2th2intable}
\int_\RR \frac{u^2 \th^2}{v} dx
&\le M(\Ecal_0,\ell_0^{-1},T) \big(\norm{\th}_{L^\infty(\RR)}+1\big) \int_\RR u^4 + u^2 + (\th-1)^2 dx\\
&\le M(\Ecal_0,\ell_0^{-1},T) \big(\norm{\th}_{L^\infty(\RR)}+1\big) \a(t),
\end{aligned}\end{equation}
where the function \(\a\) is defined by
\[
\a(t) \coloneqq \int_\RR \frac{1}{2}\big(E-\frac{R}{\g-1}\big)^2 + \frac{M}{4}u^4 dx + 1.
\]
Then, summing up the above estimates, we find that 
\[
\a'(t)
+\frac{\g-1}{2R} \int_\RR \frac{(E_x)^2}{v} dx
+M_1^* \int_\RR \frac{(uu_x)^2}{v} dx
\le M(\Ecal_0,\ell_0^{-1},T) \big(\norm{\th}_{L^\infty(\RR)}+1\big) \, \a(t).
\]
Thanks to the bound on \(\norm{\th}_{L^1_t L^\infty_x}\) in Lemma \ref{th1infty}, we use Gr\"onwall's lemma with \eqref{LQ} to get
\[
\sup_{t\in [0, T)} \big(\norm{u}_{L^4(\RR)} + \norm{\th-1}_{L^2(\RR)}\big)
\le M(\norm{U_0}_1, \ell_0^{-1}, T),
\]
which proves \eqref{uthL4L2}.
Moreover, \eqref{uthL4L2} with Lemma \ref{lem:DeGiorgi} establishes \eqref{uthL2H1}.
\end{proof}

Thanks to the lower and upper bounds on the specific volume and the locally quadratic structure \eqref{LQ}, the uniform relative entropy estimate in Lemma \ref{lem:balancelaw} together with \eqref{uthL4L2} yields a uniform bound on \(\|U_0\|_0\).

\subsection{Uniform Estimates for First Order Derivatives} \label{subsec:H1}
We now aim to obtain uniform-in-time control of \(v_x\).
Note that all available estimates for the derivatives are time-integrated; therefore, we need to upgrade them to uniform-in-time bounds.
To this end, we first remove the dependence on \(\th\) in the control of \(u_x\), as stated in the following lemma.

\begin{lem}\label{lem:uL2H1}
Under the same assumption as Lemma \ref{uthhigher}, the following holds:
\begin{equation}\label{uL2H1}
\int_0^T\int_\RR (u_x)^2(t, x) dx dt \le M(\norm{U_0}_1, \ell_0^{-1}, T).
\end{equation}
\end{lem}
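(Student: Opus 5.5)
We obtain \eqref{uL2H1} from a direct $L^2$ energy estimate on the momentum equation. The bounds of Lemma \ref{lem:DeGiorgi} make the weight $1/v$ harmless, and Lemma \ref{uthhigher} supplies the control of the pressure. The weighted bound \eqref{weightedH1} only controls $(u_x)^2/(v\th)$, and $\th$ is not yet bounded above pointwise, so that bound cannot be used directly. Instead, we multiply the momentum equation (with $\m=1$) by $u$ and integrate over $\RR$, justifying the boundary terms with the cutoff $\phi_R$:
\[
\frac12\frac{d}{dt}\int_\RR u^2\,dx+\int_\RR \frac{(u_x)^2}{v}\,dx=\int_\RR p\,u_x\,dx=\int_\RR \Big(\frac{R\th}{v}-R\Big)u_x\,dx .
\]
Here we used $\int_\RR u_x\,dx=0$ to subtract the constant $R=p(\Ubar)$. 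This subtraction is essential, since $p$ itself is not in $L^2$.

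Next we split the pressure difference as
\[
\frac{R\th}{v}-R=\frac{R(\th-1)}{v}+R\Big(\frac1v-1\Big),
\]
and apply Young's inequality:
\[
\Big|\int_\RR \Big(\frac{R\th}{v}-R\Big)u_x\,dx\Big|\le \frac12\int_\RR\frac{(u_x)^2}{v}\,dx+C\int_\RR v\Big(\frac{(\th-1)^2}{v^2}+\frac{(1-v)^2}{v^2}\Big)dx .
\]
By \eqref{vublb}, $v$ and $1/v$ are bounded by $M(\norm{U_0}_1,\ell_0^{-1},T)$. Hence the last integral is bounded by
\[
M\big(\norm{\th-1}_{L^2(\RR)}^2+\norm{v-1}_{L^2(\RR)}^2\big).
\]
The first norm is controlled uniformly in time by \eqref{uthL4L2}. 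The second follows from the relative entropy bound \eqref{balancelaw}, because $\Phi(v)\ge C^{-1}(v-1)^2$ on the range of $v$ given by \eqref{vublb} (local quadratic behavior of $\Phi$). Here $\Ecal_0\le M(\norm{U_0}_1,\ell_0^{-1})$ by \eqref{LQ}, since $H^1$ data are bounded.

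Integrating in time over $[0,T)$ and using $\norm{u_0}_{L^2(\RR)}\le\norm{U_0}_1$, we get
\[
\int_0^T\int_\RR \frac{(u_x)^2}{v}\,dx\,dt\le M(\norm{U_0}_1,\ell_0^{-1},T)\,(1+T).
\]
Finally, $1/v\ge 1/\norm{v}_{L^\infty}\ge M^{-1}$ by \eqref{vublb}, which removes the weight and yields \eqref{uL2H1}.

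The only delicate point is handling the pressure, which involves the unbounded $\th$. We avoid any need for $\norm{\th}_{L^\infty}$ by using the $L^\infty_tL^2_x$ bound on $\th-1$ from Lemma \ref{uthhigher} together with the cancellation $\int_\RR R\,u_x\,dx=0$.
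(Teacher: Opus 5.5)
Your proof is correct. It uses the same energy estimate as the paper (testing the momentum equation with $u$) but handles the pressure term differently.

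The paper keeps $-\int_\RR u\,p_x\,dx$ and expands $p_x = R\th_x/v - R\th v_x/v^2$. Young's inequality then produces $\int_\RR (v_x)^2+(\th_x)^2\,dx$, which is time-integrable by \eqref{weightedH1}, \eqref{vublb} and \eqref{uthL2H1}. It also produces $\int_\RR u^2/v^2+u^2\th^2/v^4\,dx$, whose time-integrability comes from \eqref{u2th2intable}. That in turn rests on the $L^1_tL^\infty_x$ bound for $\th$ from Lemma \ref{th1infty} and the $L^4$ bound on $u$.

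You instead subtract the far-field pressure $R$ and integrate by parts onto $u_x$. This is justified because $u$ and $p-R$ decay at infinity. You then absorb the whole pressure contribution into the viscous dissipation. This needs only the $L^\infty_tL^2_x$ bounds on $\th-1$, from \eqref{uthL4L2}, and on $v-1$, from \eqref{balancelaw} plus the quadratic lower bound for $\Phi$ on the range given by \eqref{vublb}.

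Your route is more economical. It uses none of the dissipation bounds on $v_x$ and $\th_x$, and its forcing term is bounded uniformly in time rather than merely time-integrable, so the bound grows at most linearly in $T$ once $M$ is fixed. The paper's version stays within its ``time-integrated'' framework and reuses \eqref{u2th2intable}, at the cost of more input.
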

\begin{proof}
The momentum equation yields the following estimate: 
\begin{align*}
\frac{d}{dt}\int_\RR \frac{u^2}{2} dx
+\int_\RR \frac{(u_x)^2}{v}dx
&= -\int_\RR u p_x dx
=-R\int_\RR \frac{u}{v}\th_x dx
+R\int_\RR \frac{u\th}{v^2} v_x dx\\
&\le C\int_\RR (v_x)^2 + (\th_x)^2 dx
+C\int_\RR \frac{u^2}{v^2} dx
+C\int_\RR \frac{u^2 \th^2}{v^4} dx.
\end{align*}
Then, using \eqref{vublb}, we find that 
\[
\frac{d}{dt}\int_\RR \frac{u^2}{2} dx
+\int_\RR \frac{(u_x)^2}{v}dx
\le C\int_\RR (v_x)^2 + (\th_x)^2 dx
+ M(\Ecal_0,\ell_0^{-1},T) \int_\RR u^2 + u^2 \th^2 dx.
\]
Using \eqref{u2th2intable} together with \eqref{weightedH1} and \eqref{vublb}, the right-hand side is time-integrable.
Integrating in time, we obtain that:
\[
\sup_{t\in [0, T)} \int_\RR u^2 dx
+ \int_0^T\int_\RR \frac{(u_x)^2}{v} dx dt 
\le M(\norm{U_0}_1,\ell_0^{-1},T).
\]
Thanks to \eqref{vublb}, the desired bound can be obtained.
This completes the proof.
\end{proof}

We are now ready to prove the desired estimate for \(v_x\).

\begin{lem}\label{lem:vxLinfL2}
Under the same assumptions as Lemma \ref{uthhigher}, the following holds for \(t\in [0, T)\):
\[
\sup_{t\in [0, T)} \int_\RR \Big(\frac{v_x}{v}\Big)^2 dx
+ \int_0^T\int_\RR \frac{1}{v} \Big(\Big(\frac{v_x}{v}\Big)_x\Big)^2 dx dt
\le M(\norm{U_0}_1, \ell_0^{-1}, T).
\]
\end{lem}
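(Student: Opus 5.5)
Set $w \coloneqq v_x/v = (\log v)_x$, which is the natural variable since the continuity equation reads $v_t - u_x = (v_x/v)_x = w_x$. Dividing by $v$ and differentiating in $x$ gives
\[
w_t = (\log v)_{xt} = \Big(\frac{u_x + w_x}{v}\Big)_x .
\]
We then test this equation against $w$. The plan is to obtain an $L^2$ energy identity for $w$ whose dissipation is $\int w_x^2/v$, and to control the forcing coming from $u_x$ using the time-integrated bound \eqref{uL2H1}.

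Multiplying by $w$, integrating over $\RR$ (with the cutoff $\phi_R$ to justify dropping boundary terms), and integrating by parts yields
\[
\frac12\frac{d}{dt}\int_\RR w^2\,dx + \int_\RR \frac{(w_x)^2}{v}\,dx = -\int_\RR \frac{u_x w_x}{v}\,dx \le \frac12\int_\RR \frac{(w_x)^2}{v}\,dx + \frac12\int_\RR \frac{(u_x)^2}{v}\,dx .
\]
By the lower bound on $v$ from Lemma \ref{lem:DeGiorgi} together with Lemma \ref{lem:uL2H1}, the last term is integrable in time, with integral bounded by $M(\norm{U_0}_1,\ell_0^{-1},T)$. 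Integrating from $0$ to $t$ then gives the claimed estimate. The initial term $\int_\RR (v_{0,x}/v_0)^2\,dx$ is bounded by $\ell_0^{-2}\norm{v_0-1}_{H^1}^2$, and this is where $\norm{U_0}_1$ enters. No Gr\"onwall argument is needed.

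The main obstacle I expect is justifying the regularity needed to differentiate the continuity equation and test with $w$. With only $(v-1)\in C_tH^1\cap L^2_tH^2$, the time derivative $w_t$ lies in $L^2_tH^{-1}$ while $w \in L^2_tH^1$, so the pairing is meaningful in the Lions--Magenes sense and the identity $\frac{d}{dt}\frac12\|w\|_{L^2}^2 = \langle w_t, w\rangle$ holds. Alternatively one can perform the computation for the smoother solutions given by Proposition \ref{prop:LWP} and pass to the limit.

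It is also worth checking that $w_x = \big((v_x/v)_x\big)$ is exactly the quantity appearing in the statement's dissipation term $\frac1v\big((v_x/v)_x\big)^2$, which it is. Finally, the cutoff error terms are of the form $\int \phi_R' (u_x+w_x) w / v$, and these vanish as $R\to\infty$ by the $L^2$ integrability of the relevant quantities.
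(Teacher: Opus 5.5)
Your proposal is correct and is essentially the paper's argument: you use the identity $(v_x/v)_t=(v_t/v)_x$, test against $v_x/v$, absorb $-\int_\RR u_x (v_x/v)_x/v\,dx$ by Young's inequality, and integrate in time using Lemmas \ref{lem:DeGiorgi} and \ref{lem:uL2H1}. Your observation that plain time integration suffices is right (the paper cites Gr\"onwall, but no Gr\"onwall factor arises), as is your bound of the initial term by $\ell_0^{-2}\norm{v_0-1}_{H^1(\RR)}^2$.
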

\begin{proof}
Since we have 
\[
\Big(\frac{v_x}{v}\Big)_t = (\ln v)_{xt} = \Big(\frac{v_t}{v}\Big)_x,
\]
it follows that
\begin{align*}
\frac{d}{dt}\int_\RR \frac{1}{2}\Big(\frac{v_x}{v}\Big)^2 dx 
&= \int_\RR \frac{v_x}{v}\Big(\frac{v_t}{v}\Big)_x dx 
= -\int_\RR \Big(\frac{v_x}{v}\Big)_x \frac{u_x}{v} dx
- \int_\RR \frac{1}{v}\Big(\frac{v_x}{v}\Big)_x^2 dx \\
&\le -\frac{1}{2}\int_\RR \frac{1}{v} \Big(\frac{v_x}{v}\Big)_x^2  dx + \frac{1}{2}\int_\RR \frac{(u_x)^2}{v} dx.
\end{align*}
Thanks to Lemmas \ref{lem:DeGiorgi} and \ref{lem:uL2H1}, Gr\"onwall's lemma yields the desired conclusion.
\end{proof}

As a consequence, we obtain the following corollary.

\begin{cor} \label{cor:vd}
Under the same assumptions as Lemma \ref{uthhigher}, the following holds:
\begin{equation}\label{vxLinfL2}
\norm{v_x}_{L^\infty(0, T; L^2(\RR))} 
+ \norm{v_x}_{L^4(0, T; L^\infty(\RR))}
+ \norm{v_{xx}}_{L^2(0, T; L^2(\RR))}
\le M(\norm{U_0}_1, \ell_0^{-1}, T).
\end{equation}
\end{cor}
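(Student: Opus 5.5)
The plan is to convert the weighted bounds of Lemma \ref{lem:vxLinfL2} into unweighted ones, using the two-sided bounds \eqref{vublb} from Lemma \ref{lem:DeGiorgi}. Write $v_x = v\cdot \frac{v_x}{v}$. Since $\norm{v}_{L^\infty}$ and $\norm{1/v}_{L^\infty}$ are bounded by $M(\norm{U_0}_1,\ell_0^{-1},T)$ (note $\norm{v_0}_{L^\infty}\le C\norm{U_0}_1$ by Sobolev embedding, and $\Ecal_0\le M(\norm{U_0}_0,\ell_0^{-1})$ by \eqref{LQ}), we get
\[
\sup_t \norm{v_x}_{L^2}\le \norm{v}_{L^\infty}\sup_t\Big\|\frac{v_x}{v}\Big\|_{L^2}\le M.
\]
This gives the first term in \eqref{vxLinfL2}.

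For the second derivative, expand
\[
\Big(\frac{v_x}{v}\Big)_x=\frac{v_{xx}}{v}-\frac{v_x^2}{v^2},
\qquad\text{so}\qquad
v_{xx}=v\Big(\frac{v_x}{v}\Big)_x+\frac{v_x^2}{v}.
\]
The first piece is bounded in $L^2_tL^2_x$ by Lemma \ref{lem:vxLinfL2} and the upper bound on $v$, since the dissipation there carries weight $1/v$, which is bounded below. The second piece needs $\int_0^T\norm{v_x}_{L^4}^4\,dt$. I would treat this with Gagliardo--Nirenberg:
\[
\norm{v_x}_{L^4}^4\le \norm{v_x}_{L^2}^2\norm{v_x}_{L^\infty}^2\le C\norm{v_x}_{L^2}^3\norm{v_{xx}}_{L^2}.
\]
It is cleaner to run this with $w=v_x/v=(\log v)_x$, which avoids circularity. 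By the 1D inequality $\norm{w}_{L^\infty}^2\le 2\norm{w}_{L^2}\norm{w_x}_{L^2}$, we have
\[
\int_0^T\norm{w}_{L^\infty}^4dt\le 4\sup_t\norm{w}_{L^2}^2\int_0^T\norm{w_x}_{L^2}^2dt\le M,
\]
again by Lemma \ref{lem:vxLinfL2}. Then $\norm{v_x}_{L^\infty}\le\norm{v}_{L^\infty}\norm{w}_{L^\infty}$ yields the $L^4_tL^\infty_x$ bound in \eqref{vxLinfL2}.

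With this in hand, $\frac{v_x^2}{v}$ is controlled in $L^2_tL^2_x$ by
\[
\Big\|\frac{1}{v}\Big\|_{L^\infty}\int_0^T\norm{v_x}_{L^\infty}^2\norm{v_x}_{L^2}^2\,dt\le M\,T^{1/2}\Big(\int_0^T\norm{v_x}_{L^\infty}^4dt\Big)^{1/2}\sup_t\norm{v_x}_{L^2}^2,
\]
which finishes the $v_{xx}$ bound. The only point needing care is the ordering: obtain the $L^4_tL^\infty_x$ bound through $w$ first, before estimating $v_{xx}$. Otherwise there is no real obstacle, and everything reduces to Lemma \ref{lem:vxLinfL2} together with \eqref{vublb}.
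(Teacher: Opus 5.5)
Your proposal is correct and follows the paper's proof essentially step for step. You get the $L^\infty_tL^2_x$ bound from Lemma \ref{lem:vxLinfL2} together with the upper bound on $v$, the $L^4_tL^\infty_x$ bound from the 1D interpolation inequality applied to $v_x/v$, and the $v_{xx}$ bound from $v_{xx}=v(v_x/v)_x+v_x^2/v$ once $v_x^2\in L^2_tL^2_x$ is known (in your last display the factor $\|1/v\|_{L^\infty}$ should be squared, which is harmless).
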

\begin{proof}
The first norm is controlled by Lemmas \ref{lem:DeGiorgi} and \ref{lem:vxLinfL2}.
To estimate the second one, since we have the upper bound for \(v\) in Lemma \ref{lem:DeGiorgi}, it is enough to control \(\norm{v_x/v}_{L^4(0, T; L^\infty(\RR))}\).
Since \(\norm{v_x/v}_{L^\infty(0, T; L^2(\RR))}\) and \(\norm{(v_x/v)_x}_{L^2(0, T; L^2(\RR))}\) are bounded, we get the desired bound on \(\norm{v_x/v}_{L^4(0, T; L^\infty(\RR))}\) by Sobolev embedding.
Lastly, using the bounds for the first and second norms, we deduce that \(\norm{v_x^2}_{L^2(0, T; L^2(\RR))}\) can be bounded. 
This, together with Lemma \ref{lem:vxLinfL2}, yields the control of \(\norm{v_{xx}}_{L^2(0, T; L^2(\RR))}\) as we desired.
\end{proof}

\subsection{Uniform Estimates for Higher Order Derivatives} \label{subsec:HigherOrder}
Now we are ready to establish the \(H^k\) bound of the solution, which yields \eqref{est:Hk}.
Exploiting the parabolic structures of the equations, we obtain control of higher-order derivatives by standard energy estimates, provided that the initial data is regular enough.
The proof proceeds by induction on the order of derivatives.
In what follows, we first establish the base case, which ensures a pointwise bound of \(\th\), i.e., a uniform-in-time bound of \(\norm{\th}_{L^\infty(\RR)}\).
Then, we carry out the induction step to derive bounds for higher-order derivatives.

\begin{lem}\label{lem:Hk}
Under the same assumption as Proposition \ref{prop:est}, the following holds:
\begin{equation}\label{thLinfty}
\sup_{t\in [0, T)} \norm{\th}_{L^\infty(\RR)} \le M(\norm{U_0}_1, \ell_0^{-1}, T),    
\end{equation}
and
\begin{equation}\label{solnHk}
\begin{aligned}
&\sum_{l\le k} \sup_{t\in [0, T)} \|\rd_x^l (v-1, u, \th-1)\|_{L^2(\RR)}
+ \sum_{l\le k} \|\rd_x^l (v-1, u, \th-1)\|_{L^4(0, T; L^\infty(\RR))} \\
&\qquad
+ \sum_{l\le k+1} \|\rd_x^l (v-1, u, \th-1)\|_{L^2(0, T; L^2(\RR))}
\le M(\norm{U_0}_k, \ell_0^{-1}, T).
\end{aligned}
\end{equation}
\end{lem}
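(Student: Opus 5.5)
We argue by induction on the order of derivatives, starting with a base case at order one that also yields \eqref{thLinfty}. All lower-order information is already in hand. Lemmas \ref{lem:DeGiorgi} and \ref{lem:Max} give $0<\underline{\ell}\le v$, $v\le \overline{\ell}$ and $\th\ge\underline\ell$. Lemma \ref{uthhigher} gives $\sup_t\norm{u}_{L^4}+\norm{\th-1}_{L^2}$ and $\th_x\in L^2_tL^2_x$. Lemma \ref{lem:uL2H1} gives $u_x\in L^2_tL^2_x$, and Corollary \ref{cor:vd} gives $v_x\in L^\infty_tL^2_x\cap L^4_tL^\infty_x$ and $v_{xx}\in L^2_tL^2_x$.

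\emph{Base case.} First multiply the momentum equation by $-u_{xx}$. The viscous term gives $\int u_{xx}^2/v$ up to a commutator $\int u_xv_xu_{xx}/v^2$. That commutator is bounded by $\frac18\int u_{xx}^2/v+C\norm{v_x}_{L^\infty}^2\int u_x^2$, which is integrable in time by Corollary \ref{cor:vd} and Lemma \ref{lem:uL2H1}. The pressure term is $\int p_xu_{xx}$ with $p_x=R\th_x/v-R\th v_x/v^2$. Its first part is harmless since $\th_x\in L^2_tL^2_x$. The second part is bounded by $C\norm{\th}_{L^\infty}^2\int v_x^2$, and the factor $\norm{\th}_{L^\infty}^2$ is the only source of trouble.

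Next multiply the temperature equation by $-\th_{xx}$. This produces $\int\th_{xx}^2/v$ together with the terms $\int \th u_x\th_{xx}/v$ and $\int u_x^2\th_{xx}/v$. I expect the main obstacle to be closing these estimates without a priori control of $\norm{\th}_{L^\infty}$ and $\norm{u_x}_{L^\infty}$. The plan is to add the two estimates and set
\[
Y(t)=\norm{u_x}_{L^2}^2+\norm{\th_x}_{L^2}^2,\qquad D(t)=\norm{u_{xx}}_{L^2}^2+\norm{\th_{xx}}_{L^2}^2 .
\]
The pointwise factors are then handled by Gagliardo--Nirenberg. For the temperature, $\norm{\th-1}_{L^\infty}^2\le C\norm{\th-1}_{L^2}\norm{\th_x}_{L^2}$, so $\norm{\th}_{L^\infty}^2\le C(1+Y^{1/2})$. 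For the velocity, $\norm{u_x}_{L^\infty}^2\le C\norm{u_x}_{L^2}\norm{u_{xx}}_{L^2}$ and $\norm{u_x}_{L^4}^4\le C\norm{u_x}_{L^2}^3\norm{u_{xx}}_{L^2}$.

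With these, each bad term is bounded by $\epsilon D+C(1+g(t))(1+Y)$, where $g\in L^1(0,T)$ collects the time-integrable quantities $\norm{v_x}_{L^\infty}^2$, $\norm{\th_x}_{L^2}^2$ and $\norm{u_x}_{L^2}^2$. For example, $\int u_x^2|\th_{xx}|\le \epsilon D+C\norm{u_x}_{L^2}^3\norm{u_{xx}}_{L^2}\le 2\epsilon D+C\norm{u_x}_{L^2}^2\,Y^2$. This term is of the form $g(t)Y^2$, so Gr\"onwall still applies: $\norm{u_x}_{L^2}^2\in L^1_t$, and $\log Y$ obeys a linear inequality after dividing by $Y$. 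Likewise $\int \th^2v_x^2\le C(1+Y^{1/2})\norm{v_x}_{L^2}^2\le C(1+Y)$ because $v_x\in L^\infty_tL^2_x$.

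Gr\"onwall then gives $\sup_tY+\int_0^TD\le M(\norm{U_0}_1,\ell_0^{-1},T)$. Combined with $\norm{\th-1}_{L^2}$ and Sobolev embedding, this yields \eqref{thLinfty}. The $v$-part at this level is exactly Corollary \ref{cor:vd}, and the $L^4_tL^\infty_x$ bounds follow by interpolation, since $\norm{f}_{L^\infty}^4\le C\norm{f}_{L^2}^2\norm{f_x}_{L^2}^2$.

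\emph{Induction step.} Assume \eqref{solnHk} holds up to order $l-1$, where $2\le l\le k$. Apply $\rd_x^{l-1}$ to each equation of \eqref{bnsfsys} and test with $-\rd_x^{l+1}$ of the respective unknown. For the $v$-equation the test function is $-\rd_x^{l+1}v$, and the dissipative term $(v_x/v)_x$ gives $\int(\rd_x^{l+1}v)^2/v$. Now $v$, $1/v$, $\th$ and $1/\th$ are bounded, so every coefficient is smooth on the range of the solution. The commutators from Leibniz' rule are sums of products of derivatives of order at most $l$. In each product the highest-order factor is kept in $L^2$ and the remaining factors are placed in $L^\infty$ using the $L^4_tL^\infty_x$ and $L^\infty_tL^2_x$ bounds from the previous step.

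Young's inequality absorbs the top-order terms into the dissipation. What remains is a linear Gr\"onwall inequality for $\norm{\rd_x^l(v,u,\th)}_{L^2}^2$ with time-integrable coefficients, which gives \eqref{solnHk} at order $l$. This step is routine once the base case is closed; the only point needing care is at $l=2$, where $\rd_x^2 v$ and $\rd_x^2 u$ appear in products with lower-order terms. These are handled again by Gagliardo--Nirenberg.
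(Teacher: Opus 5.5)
\paragraph{The gap.} Your base case does not close as written. You add the $u_x$ and $\th_x$ estimates into $Y=\|u_x\|_{L^2}^2+\|\th_x\|_{L^2}^2$. This forces you to bound $\int u_x^2|\th_{xx}|/v$ by $\epsilon D+C\|u_x\|_{L^2}^6\le \epsilon D+C\,g(t)Y^2$, where $g=\|u_x\|_{L^2}^2$ is only known to lie in $L^1(0,T)$.

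Gr\"onwall does not handle a $gY^2$ term. Dividing by $Y$ gives $(\log Y)'\le gY$, which is still superlinear, not linear in $\log Y$. The comparison equation $Y'=gY^2$ has the solution $Y(t)=Y(0)/(1-Y(0)\int_0^t g\,ds)$, which blows up once $\int_0^t g\,ds$ reaches $1/Y(0)$. The data are large and $\int_0^T g$ is merely bounded, not small, so nothing excludes this. Hence neither $\sup_t Y$ nor \eqref{thLinfty} is established by your argument, and your induction step depends on both.

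\paragraph{The fix.} Do not couple the two estimates; this is how the paper proceeds. The factor $\|\th\|_{L^\infty}^2$ that you call the only source of trouble in the momentum estimate is harmless. Indeed $\|\th-1\|_{L^\infty}^2\le C\|\th-1\|_{L^2}\|\th_x\|_{L^2}\le C(\|\th-1\|_{L^2}^2+\|\th_x\|_{L^2}^2)$, which is already in $L^1(0,T)$ by \eqref{uthL4L2} and \eqref{uthL2H1}. With $v_x\in L^\infty_tL^2_x$ from Corollary \ref{cor:vd}, the term $\int_0^T\|\th\|_{L^\infty}^2\|v_x\|_{L^2}^2\,dt$ is bounded, and no $Y^{1/2}$ bound is needed.

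The commutator $\|v_x\|_{L^\infty}^2\|u_x\|_{L^2}^2$ must be treated as a linear Gr\"onwall term. It is not itself known to be integrable, since it is the product of an $L^2_t$ and an $L^1_t$ function. With that, the $u_x$ estimate closes on its own and yields $u_x\in L^\infty_tL^2_x$ and $u_{xx}\in L^2_tL^2_x$.

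Only then estimate $\th_x$. Now $\|u_x\|_{L^4}^4\le C\sup_t\|u_x\|_{L^2}^3\,\|u_{xx}\|_{L^2}$ and $\|\th\|_{L^\infty}^2\|u_x\|_{L^2}^2$ are integrable in time. The remaining commutator $\|v_x\|_{L^\infty}^2\|\th_x\|_{L^2}^2$ is again linear, so Gr\"onwall applies. With the base case repaired this way, your induction step is essentially the paper's, which uses the Kato--Ponce commutator estimate where you use Leibniz plus Gagliardo--Nirenberg.
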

\begin{proof}
Note that, by Sobolev embedding (as in the proof of Corollary \ref{cor:vd}), the second norm is bounded by the first and third norms.
So, it suffices to estimate the first and third norms.

\step{1}{(Base Case)}
We have already established the desired estimate for the case \(l=0\), as well as for \(v\) when \(l=1\).
To control \(u_x\) and \(\th_x\), we perform the following energy estimates:
\begin{align*}
\frac{d}{dt}\frac{1}{2} \int_\RR (u_x)^2 dx 
&= \int_\RR u_x u_{xt} dx 
=\int_\RR u_x \Big(-p_x + \Big(\frac{u_x}{v}\Big)_x\Big)_x dx \\
&= \int_\RR u_{xx} p_x dx - \int_\RR \frac{(u_{xx})^2}{v} dx + \int_\RR u_{xx}\frac{u_x v_x}{v^2} dx \\
&\le -c\int_\RR (u_{xx})^2 dx
+ M(\norm{U_0}_1,\ell_0^{-1},T) \int_\RR \th^2 (v_x)^2 + (\th_x)^2 + (u_x)^2 (v_x)^2 dx,
\end{align*}
for some constant \(c\) with \(1/c \le M(\norm{U_0}_1, \ell_0^{-1}, T)\).
To apply Gr\"onwall's inequality, we need to show that the right-hand side is time-integrable.
First, using \eqref{vxLinfL2} and \eqref{solnHk} with \(l = 0\), we find that
\begin{align*}
\int_0^T\int_\RR \th^2 (v_x)^2 dxdt
\le \int_0^T \Big(\norm{\th}_{L^\infty(\RR)}^2\int_\RR (v_x)^2 dx\Big) dt
\le \norm{\th}_{L^2(0, T; L^\infty(\RR))}^2 \norm{v_x}_{L^\infty(0, T; L^2(\RR))}^2
\end{align*}
is bounded.
Second, by \eqref{uthL2H1}, it simply follows that \(\int_\RR (\th_x)^2 dx\) is time-integrable.\\
Lastly, since we have 
\[
\int_\RR (u_x)^2 (v_x)^2 dx \le \norm{v_x}_{L^\infty(\RR)}^2 \int_\RR (u_x)^2 dx
\]
and \(\norm{v_x}_{L^\infty_x}^2\) is time-integrable by \eqref{vxLinfL2}, Gr\"onwall's lemma yields the desired bound on \(u_x\).

\vspace{2mm}
On the other hand, for \(\th_x\), we do a similar energy estimate as above, as follows:
\begin{align*}
&\frac{d}{dt} \frac{1}{2} \frac{R}{(\g-1)}\int_\RR (\th_x)^2 dx
= \int_\RR \th_x \frac{R\th_{xt}}{\g-1} dx
= \int_\RR \th_x \Big(-pu_x+\Big(\frac{\th_x}{v}\Big)_x+\frac{(u_x)^2}{v}\Big)_x dx\\
&\qquad\qquad
= - \int_\RR \frac{(\th_{xx})^2}{v} dx
+ \int_\RR p u_x \th_{xx} dx
+ \int_\RR \frac{v_x \th_x}{v^2} \th_{xx}dx
- \int_\RR \frac{(u_x)^2}{v} \th_{xx} dx \\
&\qquad\qquad
\le -c\int_\RR (\th_{xx})^2 dx
+ M(\norm{U_0}_1,\ell_0^{-1},T) \int_\RR \th^2 (u_x)^2 + (\th_x)^2 (v_x)^2 + (u_x)^4 dx,
\end{align*}
for some constant \(c\) with \(1/c \le M(\norm{U_0}_1, \ell_0^{-1}, T)\).
We, once again, analyze the right-hand side on a term-by-term basis.
First, \eqref{solnHk} with \(l=0\) and the above result on \(u_x\) yield that
\[
\int_0^T\int_\RR \th^2 (u_x)^2 dx dt
\le \norm{\th}_{L^2(0, T; L^\infty(\RR))}^2 \norm{u_x}_{L^\infty(0, T; L^2(\RR))}^2
\le M(\norm{U_0}_1,\ell_0^{-1},T).
\]
Second, the following holds: 
\[
\int_\RR (\th_x)^2 (v_x)^2 dx
\le \norm{v_x}_{L^\infty(\RR)}^2 \int_\RR (\th_x)^2 dx
\]
and \(\norm{v_x}_{L^\infty_x}^2\) is time-integrable by \eqref{vxLinfL2}.
Finally, using \eqref{solnHk} for \(u\) when \(l = 1\),
\begin{align*}
\int_0^T\int_\RR (u_x)^4 dx dt
&\le \int_0^T \Big(\norm{u_x}_{L^\infty(\RR)}^2 \int_\RR (u_x)^2 dx\big) dt
\le \norm{u_x}_{L^2(0, T; L^\infty(\RR))}^2 \norm{u_x}_{L^\infty(0, T; L^2(\RR))}^2
\end{align*}
is bounded.
Then, we apply Gr\"onwall's lemma to obtain the desired bound on \(\th_x\).

\vspace{2mm}
We have verified the estimates for \(k=1\), i.e., the base case.
In particular, this establishes \eqref{thLinfty}, which ensures that \(\th\) is uniformly bounded and thereby allows us to control the nonlinearities in the induction step for \(k>1\).

\step{2}{(Induction Step)}
We assume that the desired bounds hold for \(l\le k-1\), and aim to show that they also hold for \(k\).
We perform energy estimates by differentiating the system \(k\) times in space, multiplying the resulting equations by \(\rd_x^k (v, u, \th)\), and then integrating in space.
In what follows, \(\Mbar\) denotes a positive constant given by \(M(\norm{U_0}_k,\ell_0^{-1},T)\).

\vspace{2mm}
We first observe the \(v\) variable.
We proceed as follows:
\begin{align*}
\frac{d}{dt}\frac{1}{2}\int_\RR (\rd_x^k v)^2 dx
&= -\int_\RR (\rd_x^{k+1} v) (\rd_x^k u) dx
- \int_\RR (\rd_x^{k+1} v) \rd_x^k \Big(\frac{v_x}{v}\Big) dx \\
&\le -c\int_\RR (\rd_x^{k+1} v)^2 dx
+ \Mbar\int_\RR (\rd_x^k u)^2 + \abs{\Big[\rd_x^k, \frac{1}{v}\Big]v_x}^2 dx.
\end{align*}
The induction hypothesis shows that \(\int_\RR (\rd_x^k u)^2\) is time-integrable.
Moreover, for the commutator term, we apply \eqref{kpce} to find that
\[
\int_\RR \abs{\Big[\rd_x^k, \frac{1}{v}\Big]v_x}^2 dx
\le \Mbar \norm{v-1}_{H^k(\RR)}^2\norm{v_x}_{L^\infty(\RR)}^2.
\]
Then, since \(\norm{v_x}_{L^\infty_x}^2\) is time-integrable by the induction hypothesis, Gr\"onwall's lemma yields the desired bound on \(\rd_x^k v\).

\vspace{2mm}
Next, we analyze the \(u\) variable.
We observe that
\begin{align*}
\frac{d}{dt} \frac{1}{2}\int_\RR (\rd_x^k u)^2 dx
&= \int_\RR (\rd_x^{k+1} u) (\rd_x^k p) dx
- \int_\RR (\rd_x^{k+1} u) \rd_x^k \Big(\frac{u_x}{v}\Big) dx \\
&\le -c\int_\RR (\rd_x^{k+1} u)^2 dx
+ \Mbar\int_\RR (\rd_x^k p)^2 + \abs{\Big[\rd_x^k, \frac{1}{v}\Big]u_x}^2 dx.
\end{align*}
To handle \((\rd_x^k p)^2\) term, we apply the Sobolev product estimate to obtain
\begin{align*}
\Mbar \int_\RR (\rd_x^k p)^2 dx
&\le \Mbar\big(\norm{v-1}_{H^k(\RR)}^2 \norm{\th}_{L^\infty(\RR)}^2
+ \norm{v}_{L^\infty(\RR)}^2 \norm{\th-1}_{H^k(\RR)}^2\big)\\
&\le \Mbar \big(\norm{v-1}_{H^k(\RR)}^2 + \norm{\th-1}_{H^k(\RR)}^2\big),
\end{align*}
Here, \eqref{vublb} and \eqref{thLinfty} were used.
The right-hand side is time-integrable by the hypothesis.
In addition, for the commutator term, we have
\[
\int_\RR \abs{\Big[\rd_x^k, \frac{1}{v}\Big]u_x}^2 dx
\le \Mbar\big(\norm{v-1}_{H^k(\RR)}^2 \norm{u_x}_{L^\infty(\RR)}^2 
+ \norm{v_x}_{L^\infty(\RR)}^2\norm{u}_{H^k(\RR)}^2\big).
\]
The first term on the right-hand side is time-integrable from the above estimate on \(v\) and \eqref{solnHk} with \(l = 1\).
Since the second term is the multiplication of \(\norm{u}_{H^k_x}^2\) with the time-integrable term \(\norm{v_x}_{L^\infty_x}^2\), we use Gr\"onwall's lemma to get the desired bound on \(\rd_x^k u\).

\vspace{2mm}
It remains to estimate the \(\th\) variable.
To this end, we first note that
\begin{multline*}
\frac{d}{dt}\frac{1}{2}\frac{R}{\g-1} \int_\RR (\rd_x^k \th)^2 dx
= \int_\RR (\rd_x^{k+1}\th) \rd_x^{k-1}(pu_x) dx
- \int_\RR (\rd_x^{k+1} \th) \Big(\rd_x^k\Big(\frac{\th_x}{v}\Big) + \rd_x^k\Big(\frac{(u_x)^2}{v}\Big)\Big) dx \\
\le -c\int_\RR (\rd_x^{k+1} \th)^2 dx 
+ \Mbar\int_\RR \big(\rd_x^{k-1}(pu_x)\big)^2
+ \abs{\Big[\rd_x^k, \frac{1}{v}\Big]\th_x}^2
+ \abs{\rd_x^k\Big(\frac{(u_x)^2}{v}\Big)}^2 dx.
\end{multline*}
Using the Sobolev product estimate, the first term can be bounded as follows:
\[
\int_\RR \abs{\rd_x^{k-1}(pu_x)}^2 dx
\le \Mbar \big(\norm{v-1}_{H^{k-1}(\RR)}^2 \norm{u_x}_{L^\infty(\RR)}^2 
+ \norm{\th-1}_{H^{k-1}(\RR)}^2 \norm{u_x}_{L^\infty(\RR)}^2
+ \norm{u}_{H^k(\RR)}^2\big)
\]
and all of the terms on the right-hand side are time-integrable by the induction hypothesis.
The second term can be treated in exactly the same way as the commutator term in the estimate for \(u\).
For the last term, we apply the Sobolev product estimate to obtain
\[
\int_\RR \abs{\rd_x^{k-1}\Big(\frac{u_x^2}{v}\Big)}^2 dx
\le \norm{u_x}_{L^\infty(\RR)}^4 \|\rd_x^{k-1} v\|_{L^2(\RR)}^2 
+ \|\rd_x^k u\|_{L^2(\RR)}^2 \norm{u_x}_{L^\infty(\RR)}^2.
\]
The right-hand side is time-integrable by the induction hypothesis and the above estimate for \(u\).
Finally, we apply Gr\"onwall's inequality to obtain the desired bound on \(\rd_x^k \th\).
This completes the induction step, and thus the proof of Lemma \ref{lem:Hk}.
\end{proof}

\begin{appendix}
\section{Proof of Proposition \ref{prop:LWP}: Local Well-Posedness} \label{appendix:LWP}
\setcounter{equation}{0}
The proof of local well-posedness follows a standard approach (see for instance \cite{CEKS-26, CKV-JMPA20,CDNP-20}).
We first introduce an iteration scheme and establish uniform bounds for the resulting sequence.
Then we prove the convergence of the sequence, which yields the existence of solutions.
Finally, we show the uniqueness.
In the meantime, we also verify that the guaranteed existence time of the solution depends only on the \(H^k\) norm of the initial data and the lower bound of \(v_0\) and \(\th_0\), which is crucial for the proof of Theorem \ref{thm:main}.

\step{1}{(Iteration Scheme)}
First of all, we introduce an iteration scheme to construct a sequence of functions.
To this end, we recall that the initial data \((v_0,u_0,\th_0)\) satisfies 
\[
\norm{U_0}_k
=\norm{v_0,u_0,\th_0}_k
=\norm{v_0-1}_{H^k(\RR)}
+\norm{u_0}_{H^k(\RR)}
+\norm{\th_0-1}_{H^k(\RR)} < \infty
\]
for a fixed integer \(k\ge 1\), and assume that 
\[
\ell_0 \coloneqq \min \{\inf_{x\in\RR}v_0(x), \inf_{x\in\RR}\th_0(x)\} >0.
\]

We initialize the scheme as follows: 
\[
(v^0(t,x),u^0(t,x),\th^0(t,x))
\coloneqq (v_0(x),u_0(x),\th_0(x)).
\]
Then, for \(n\ge1\), and given \((v^{n-1},u^{n-1},\th^{n-1})\), we define \((v^n,u^n,\th^n)\) as a solution of the following system of linear equations:
\begin{equation} \label{lin-sys}
\begin{cases}
(v^n)_t - \t \big(\frac{(v^n)_x}{v^{n-1}}\big)_x
= (u^{n-1})_x, \\
(u^n)_t - \m \big(\frac{(u^n)_x}{v^{n-1}}\big)_x
= -R\big(\frac{\th^{n-1}}{v^{n-1}}\big)_x, \\
\frac{R}{\g-1}(\th^n)_t - \k \big(\frac{(\th^n)_x}{v^{n-1}}\big)_x
= -R\big(\frac{\th^{n-1}}{v^{n-1}}\big) (u^{n-1})_x
+\m\frac{((u^{n-1})_x)^2}{v^{n-1}},\\
(v^n,u^n,\th^n)\mid_{t=0} = (v_0,u_0,\th_0).
\end{cases}
\end{equation}
Thanks to the general theory of the heat equation, for given \((v^{n-1},u^{n-1},\th^{n-1})\) satisfying
\[
(v^{n-1}-1),u^{n-1},(\th^{n-1}-1)\in C(0,T;H^k(\RR))\cap L^2(0,T;H^{k+1}(\RR))
\]
for some \(T>0\), then \eqref{lin-sys} has a unique solution \((v^n,u^n,\th^n)\) belongs to the same space.
Note that \(k\ge 1\) is needed at this point, to have the \((u^{n-1}_x)^2\) term in the desired space.

We also set the following quantity to denote the energy of the initial data:
\[
\Etil_0 \coloneqq
\norm{v_0-1}_{H^k(\RR)}^2
+ \norm{u_0}_{H^k(\RR)}^2
+ \norm{\th_0-1}_{H^k(\RR)}^2.
\]

\step{2}{(Induction Hypotheses)}
We introduce a set of induction hypotheses on the sequence constructed above.
More precisely, we assume that there exist \(T_0>0\) and \(M_0\) (which will be chosen at the last step) such that, for some \(n\ge 1\), the iterate \((v^{n-1},u^{n-1},\th^{n-1})\) satisfies
\begin{equation}\label{indhyp-1}
\norm{v^{n-1}-1}_{C(0,T;H^k(\RR))}^2
+ \norm{u^{n-1}}_{C(0,T;H^k(\RR))}^2
+ \norm{\th^{n-1}-1}_{C(0,T;H^k(\RR))}^2
\le M_0\Etil_0,
\end{equation}
and
\begin{equation}\label{indhyp-2}
\int_0^{T_0} \int_\RR (\rd_x^{k+1} v^{n-1})^2 + (\rd_x^{k+1} u^{n-1})^2 + (\rd_x^{k+1} \th^{n-1})^2 dx dt \le M_0\Etil_0,
\end{equation}
together with
\begin{equation}\label{indhyp-3}
\inf_{[0,T_0]\times\RR}v^{n-1}(t,x) \ge \frac{\ell_0}{2}, \qquad
\inf_{[0,T_0]\times\RR}\th^{n-1}(t,x) \ge \frac{\ell_0}{2}.
\end{equation}
Note that this assumption is valid for the case when \(n = 1\).

\step{3}{(Uniform Bounds)}
We carry out the inductive step.
Under the induction hypotheses \eqref{indhyp-1}-\eqref{indhyp-3}, we show that they are preserved at the \(n\)-th iterate.

\vspace{2mm}
Let \(n\ge1\). For the sake of simplicity, we adopt the following notation: 
\[
(v,u,\th) \coloneqq (v^n,u^n,\th^n), \qquad
(\vtil,\util,\thtil) \coloneqq (v^{n-1},u^{n-1},\th^{n-1}).
\]
Then, the iteration scheme \eqref{lin-sys} can be written as follows:
\begin{equation} \label{iteration}
\begin{cases}
v_t - \t \big(\frac{v_x}{\vtil}\big)_x
= \util_x, \\
u_t - \m \big(\frac{u_x}{\vtil}\big)_x
= -R\big(\frac{\thtil}{\vtil}\big)_x, \\
\frac{R}{\g-1}\th_t - \k \big(\frac{\th_x}{\vtil}\big)_x
= -R\big(\frac{\thtil}{\vtil}\big) \util_x
+\m\frac{(\util_x)^2}{\vtil},\\
(v,u,\th)\mid_{t=0} = (v_0,u_0,\th_0).
\end{cases}
\end{equation}
Moreover, the induction hypotheses \eqref{indhyp-1}-\eqref{indhyp-3} read as follows:
\begin{equation}\label{ass1}
\norm{\vtil-1}_{C(0,T_0;H^k(\RR))}^2
+ \norm{\util}_{C(0,T_0;H^k(\RR))}^2
+ \|\thtil-1\|_{C(0,T_0;H^k(\RR))}^2
\le M_0\Etil_0,
\end{equation}
\begin{equation}\label{ass2}
\int_0^{T_0} \int_\RR (\rd_x^{k+1} \vtil)^2 + (\rd_x^{k+1} \util)^2 + (\rd_x^{k+1} \thtil)^2 dx dt \le M_0\Etil_0,
\end{equation}
and
\begin{equation}\label{ass3}
\inf_{[0,T_0]\times\RR} \vtil(t,x) \ge \frac{\ell_0}{2}, \qquad
\inf_{[0,T_0]\times\RR} \thtil(t,x) \ge \frac{\ell_0}{2}.
\end{equation}

We now derive uniform estimates for the inductive step.
To begin with, we observe
\begin{equation}\begin{aligned}\label{vl2-eq}
\rd_t \Big(\frac{1}{2}\int_\RR (v-1)^2 dx\Big)
&=\int_\RR (v-1)\Big(\t\big(\frac{v_x}{\vtil}\big)_x +\util_x\Big)dx\\
&=-\t\int_\RR \frac{(v_x)^2}{\vtil} dx 
-\int_\RR v_x \util dx
\le -\frac{\t}{2}\int_\RR \frac{(v_x)^2}{\vtil} dx
+ C\int_\RR \vtil \util^2 dx.
\end{aligned}\end{equation}
Then, since it follows from \eqref{ass1} that 
\[
\int_\RR \vtil \util^2 dx
\le \norm{\vtil}_{L^\infty(0,T_0;L^\infty(\RR))} \norm{\util}_{C(0,T_0;L^2(\RR))}^2
\le M(\Etil_0),
\]
we obtain
\[
\sup_{t\in[0,T_0]} \int_\RR (v-1)^2 dx
+ \t\int_0^{T_0} \int_\RR \frac{(v_x)^2}{\vtil} dx dt 
\le \int_\RR (v_0-1)^2 dx + M(\Etil_0) T_0
= \Etil_0 + M(\Etil_0) T_0.
\]
Thus, for \(T_0\) with \(M(\Etil_0)T_0 < \Etil_0\), we obtain 
\begin{equation}\label{vl2}
\sup_{t\in[0,T_0]} \int_\RR (v-1)^2 dx
\le 2\Etil_0.
\end{equation}

We now use \eqref{ass1} and observe
\begin{equation}\begin{aligned}\label{ul2-eq}
\rd_t \Big(\frac{1}{2}\int_\RR u^2 dx\Big)
&=-\m\int_\RR \frac{(u_x)^2}{\vtil} dx
+R\int_\RR u \Big(\frac{\thtil}{\vtil^2}\vtil_x-\frac{\thtil_x}{\vtil}\Big) dx\\
&\le -\m\int_\RR \frac{(u_x)^2}{\vtil} dx
+ \frac{1}{2}\int_\RR u^2 dx
+ M(\Etil_0,\ell_0^{-1}) \int_\RR (\vtil_x)^2 + (\thtil_x)^2 dx\\
&\le -\m\int_\RR \frac{(u_x)^2}{\vtil} dx
+ \frac{1}{2}\int_\RR u^2 dx
+ M(\Etil_0,\ell_0^{-1}).
\end{aligned}\end{equation}
Then, using Gr\"onwall's lemma, we find that for \(T_0>0\) of which smallness depends only on \(\Etil_0\) and \(\ell_0\),
\begin{multline}\label{ul2}
\sup_{t\in[0,T_0]} \int_\RR u^2 dx
\le e^{T_0}\int_\RR (u_0)^2 dx
+(e^{T_0}-1) M(\Etil_0,\ell_0^{-1}) \\
\le (1+2T_0)\Etil_0 + 2T_0 M(\Etil_0,\ell_0^{-1}) \le 2\Etil_0.
\end{multline}

We now turn to the \(\th\) variable.
The \(\th\)-equation in \eqref{iteration} gives that
\begin{multline*}
\rd_t \Big(\frac{R}{2(\g-1)}\int_\RR (\th-1)^2 dx\Big)
= -\k\int_\RR \frac{(\th_x)^2}{\vtil} dx
-R\int_\RR \frac{\thtil}{\vtil}\util_x (\th-1) dx
+\m\int_\RR \frac{(\util_x)^2}{\vtil} (\th-1) dx \\
= -\k\int_\RR \frac{(\th_x)^2}{\vtil} dx
+\int_\RR (\th-1)^2 dx
+M(\Etil_0,\ell_0^{-1}) \Big(1+\norm{\util_x}_{L^\infty(\RR)}^2\Big) \int_\RR (\util_x)^2 dx.
\end{multline*}
Then, using the following form of Sobolev inequality 
\begin{equation}\label{sob}
\norm{f}_{L^\infty(\RR)}^2 \le C \norm{f}_{L^2(\RR)} \norm{f'}_{L^2(\RR)}
\end{equation}
and \eqref{ass1}, we find that
\begin{equation}\begin{aligned}\label{thl2-eq}
\rd_t \Big(\frac{R}{2(\g-1)} \int_\RR (\th-1)^2 dx\Big)
&\le -\k\int_\RR \frac{(\th_x)^2}{\vtil} dx
+\int_\RR (\th-1)^2 dx\\
&\qquad\qquad
+M(\Etil_0,\ell_0^{-1})
+M(\Etil_0,\ell_0^{-1}) \norm{\util_{xx}}_{L^2(\RR)}.
\end{aligned}\end{equation}
Note by \eqref{ass2} that \(\norm{\util_{xx}}_{L^2(\RR)}^2\) is time-integrable.
Hence, Gr\"onwall's lemma establishes that for \(T_0>0\) of which smallness depends only on \(\Etil_0\) and \(\ell_0\),
\begin{equation}\begin{aligned}\label{thl2}
\sup_{t\in[0,T_0]} \int_\RR (\th-1)^2 dx
&\le e^{CT_0}\int_\RR (\th_0-1)^2 dx
+\frac{e^{CT_0}-1}{C} M(\Etil_0,\ell_0^{-1})\\
&\qquad\qquad
+ \sqrt{T_0} e^{CT_0} M(\Etil_0,\ell_0^{-1}) \norm{\util_{xx}}_{L^2(0,T_0; L^2(\RR))}
\le 2\Etil_0.
\end{aligned}\end{equation}

We now proceed to estimate the higher-order derivatives.
We first observe that
\begin{align*}
\rd_t \Big(\frac{1}{2}\int_\RR (v_x)^2 dx\Big)
&=-\t \int_\RR \frac{(v_{xx})^2}{\vtil} dx
+\t \int_\RR \frac{\vtil_x}{\vtil^2} v_x v_{xx} dx
-\int_\RR \util_x v_{xx} dx \\
&\le -\frac{\t}{2}\int_\RR \frac{(v_{xx})^2}{\vtil} dx
+M(\ell_0^{-1}) \int_\RR (v_x)^2 (\vtil_x)^2 dx
+C\int_\RR (\util_x)^2 dx.
\end{align*}
This together with \eqref{ass1} and \eqref{sob} allows one to use the diffusion term and to find that
\begin{align*}
\rd_t \Big(\frac{1}{2}\int_\RR (v_x)^2 dx\Big)
&\le -\frac{\t}{2}\int_\RR \frac{(v_{xx})^2}{\vtil} dx
+M(\ell_0^{-1}) \norm{v_x}_{L^2(\RR)} \norm{v_{xx}}_{L^2(\RR)} \int_\RR (\vtil_x)^2 dx
+M(\Etil_0)\\
&\le -\frac{\t}{4}\int_\RR \frac{(v_{xx})^2}{\vtil} dx
+M(\Etil_0,\ell_0^{-1}) \norm{v_x}_{L^2(\RR)}^2
+M(\Etil_0).
\end{align*}
By Gr\"onwall's lemma, we obtain
\begin{equation}\label{vh1}
\sup_{t\in[0,T]} \int_\RR (v_x)^2 dx
+ \int_0^T \int_\RR (v_{xx})^2 dx dt 
\le 2\Etil_0.
\end{equation}
Moreover, for the higher derivatives, we observe
\begin{align*}
\rd_t \Big(\frac{1}{2}\int_\RR (\rd_x^l v)^2 dx\Big)
&=-\t \int_\RR \frac{(\rd_x^{l+1} v)^2}{\vtil} dx
-\t \int_\RR (\rd_x^{l+1} v) \Big[\rd_x^l, \frac{1}{\vtil}\Big] (\rd_x v) dx\\
&\qquad\qquad
-\int_\RR (\rd_x^{l+1} v) (\rd_x^l \util) dx,
\end{align*}
where \(l\) is an integer which satisfies \(2\le l\le k\).
To proceed further, the following Kato--Ponce commutator estimate (see \cite{KP-CPAM88}) is needed:
\begin{equation}\label{kpce}
\norm{[\rd_x^k, f]g}_{L^2(\RR)}^2
\le C \norm{\rd_x f}_{L^\infty(\RR)}^2 \|\rd_x^{k-1}g\|_{L^2(\RR)}^2
+C \|\rd_x^k f\|_{L^2(\RR)}^2 \norm{g}_{L^\infty(\RR)}^2.
\end{equation}
Then, using \eqref{kpce} with \eqref{ass1} and \eqref{sob}, we obtain
\begin{align*}
\rd_t \Big(\frac{1}{2}\int_\RR (\rd_x^l v)^2 dx\Big)
&\le
-\frac{\t}{2} \int_\RR \frac{(\rd_x^{l+1} v)^2}{\vtil} dx
+C\norm{\vtil}_{L^\infty(\RR)}
\|\rd_x \frac{1}{\vtil}\|_{L^\infty(\RR)}^2 \|\rd_x^l v\|_{L^2(\RR)}^2\\
&\qquad
+C\norm{\vtil}_{L^\infty(\RR)}
\|\rd_x^l \frac{1}{\vtil}\|_{L^2(\RR)}^2 \|\rd_x v\|_{L^\infty(\RR)}^2
+C\norm{\vtil}_{L^\infty(\RR)} \int_\RR (\rd_x^l \util)^2 dx\\
&\le
-\frac{\t}{2} \int_\RR \frac{(\rd_x^{l+1} v)^2}{\vtil} dx
+M(\Etil_0,\ell_0^{-1}) \|\rd_x^l v\|_{L^2(\RR)}^2
+M(\Etil_0,\ell_0^{-1}).
\end{align*}
Note that when we control \(\norm{\rd_x v}_{L^\infty(\RR)}\), we use the Sobolev inequality \eqref{sob} when \(l=2\), while for \(l>2\), we rely on the estimate at \(l-1\) level.
We apply Gr\"onwall's lemma to obtain
\begin{equation}\label{vhm}
\sup_{t\in[0,T]} \int_\RR (\rd_x^l v)^2 dx
+ \int_0^T \int_\RR (\rd_x^{l+1} v)^2 dx dt 
\le 2\Etil_0.
\end{equation}

We now consider the \(u\) variable.
The second equation in \eqref{iteration} yields that
\begin{align*}
\rd_t \Big(\frac{1}{2}\int_\RR (u_x)^2 dx\Big)
&=-\m\int_\RR\frac{(u_{xx})^2}{\vtil} dx
+\m\int_\RR \frac{u_x u_{xx}}{\vtil^2} \vtil_x dx
+R\int_\RR u_{xx} \Big(\frac{\util}{\vtil}\Big)_x dx\\
&\hspace{-10mm}\le-\frac{\m}{2}\int_\RR\frac{(u_{xx})^2}{\vtil} dx
+M(\ell_0^{-1})\int_\RR (u_x)^2 (\vtil_x)^2 dx
+M(\Etil_0,\ell_0^{-1})\\
&\hspace{-10mm}\le-\frac{\m}{2}\int_\RR\frac{(u_{xx})^2}{\vtil} dx
+M(\ell_0^{-1}) \norm{u_x}_{L^2(\RR)} \norm{u_{xx}}_{L^2(\RR)} \int_\RR (\vtil_x)^2 dx
+M(\Etil_0,\ell_0^{-1})\\
&\hspace{-10mm}\le-\frac{\m}{4}\int_\RR\frac{(u_{xx})^2}{\vtil} dx
+M(\Etil_0,\ell_0^{-1}) \norm{u_x}_{L^2(\RR)}^2
+M(\Etil_0,\ell_0^{-1}).
\end{align*}
Here, we used \eqref{ass1} and \eqref{sob}.
This again with Gr\"onwall's lemma implies that
\begin{equation}\label{uh1}
\sup_{t\in[0,T_0]} \int_\RR (u_x)^2 dx
+ \int_0^{T_0} \int_\RR (u_{xx})^2 dx dt 
\le 2\Etil_0,
\end{equation}
for \(T_0>0\) of which smallness depends only on \(\Etil_0\) and \(\ell_0\).
In addition, for any integer \(l\) with \(2\le l\le k\), we observe
\begin{align*}
\rd_t \Big(\frac{1}{2}\int_\RR (\rd_x^l u)^2 dx\Big)
&=-\m\int_\RR\frac{(\rd_x^{l+1} u)^2}{\vtil} dx
-\m\int_\RR (\rd_x^{l+1} u) \Big[\rd_x^l, \frac{1}{\vtil}\Big] \rd_x u dx\\
&\qquad\qquad
+R\int_\RR (\rd_x^{l+1} u) \rd_x^l \Big(\frac{\util}{\vtil}\Big) dx.
\end{align*}
Then, using \eqref{ass1}, \eqref{sob} and \eqref{kpce}, we find that
\begin{align*}
\rd_t \Big(\frac{1}{2}\int_\RR (\rd_x^l u)^2 dx\Big)
&=-\frac{\m}{2}\int_\RR\frac{(\rd_x^{l+1} u)^2}{\vtil} dx
+C\norm{\vtil}_{L^\infty(\RR)}
\|\rd_x \frac{1}{\vtil}\|_{L^\infty(\RR)}^2 \|\rd_x^l u\|_{L^2(\RR)}^2\\
&\qquad\qquad
+C\norm{\vtil}_{L^\infty(\RR)}
\|\rd_x^l \frac{1}{\vtil}\|_{L^2(\RR)}^2 \|\rd_x u\|_{L^\infty(\RR)}^2
+M(\Etil_0,\ell_0^{-1})\\
&=-\frac{\m}{2}\int_\RR\frac{(\rd_x^{l+1} u)^2}{\vtil} dx
+M(\Etil_0,\ell_0^{-1}) \|\rd_x^l u\|_{L^2(\RR)}^2
+M(\Etil_0,\ell_0^{-1}).
\end{align*}
For \(\norm{\rd_x u}_{L^\infty(\RR)}\) we use the same strategy as in the \(v\) estimate. 
Then, similarly we obtain
\begin{equation}\label{uhm}
\sup_{t\in[0,T_0]} \int_\RR (\rd_x^l u)^2 dx
+ \int_0^T \int_\RR (\rd_x^{l+1} u)^2 dx dt 
\le 2\Etil_0,
\end{equation}
for \(T_0>0\) of which smallness depends only on \(\Etil_0\) and \(\ell_0\).

Finally, we address the \(\th\) variable.
To estimate the first-order derivative, we proceed as follows: using the third equation in \eqref{iteration} to note that
\begin{align*}
\rd_t \Big(\frac{R}{2(\g-1)}\int_\RR (\th_x)^2 dx\Big)
&=-\k\int_\RR \frac{(\th_{xx})^2}{\vtil} dx
+\k\int_\RR (\th_{xx}) (\th_x) \frac{\vtil_x}{\vtil^2} dx
+R\int_\RR (\th_{xx}) \frac{\thtil}{\vtil} \util_x dx\\
&\qquad\qquad
-\m\int_\RR (\th_{xx}) \frac{(\util_x)^2}{\vtil} dx\\
&\hspace{-40.8mm}\le -\frac{\k}{2}\int_\RR \frac{(\th_{xx})^2}{\vtil} dx
+M(\ell_0^{-1}) \int_\RR (\th_x)^2 (\vtil_x)^2 dx
+M(\Etil_0,\ell_0^{-1}) \int_\RR (\util_x)^2 dx
+M(\ell_0^{-1}) \int_\RR (\util_x)^4 dx.
\end{align*}
Then, using \eqref{ass1} and \eqref{sob}, we obtain
\begin{align*}
\rd_t \Big(\frac{R}{2(\g-1)}\int_\RR (\th_x)^2 dx\Big)
&\le -\frac{\k}{2}\int_\RR \frac{(\th_{xx})^2}{\vtil} dx
+M(\Etil_0,\ell_0^{-1}) \norm{\th_x}_{L^2(\RR)} \norm{\th_{xx}}_{L^2(\RR)}\\
&\qquad\qquad
+M(\Etil_0,\ell_0^{-1})
+M(\Etil_0,\ell_0^{-1})
\|\util_x\|_{L^2(\RR)} \|\util_{xx}\|_{L^2(\RR)}\\
&\hspace{-40.8mm}\le -\frac{\k}{4}\int_\RR \frac{(\th_{xx})^2}{\vtil} dx
+M(\Etil_0,\ell_0^{-1}) \norm{\th_x}_{L^2(\RR)}^2
+M(\Etil_0,\ell_0^{-1})
+M(\Etil_0,\ell_0^{-1}) \|\util_{xx}\|_{L^2(\RR)}.
\end{align*}
As in \eqref{thl2}, we exploit the time integrability of \(\|\util_{xx}\|_{L^2(\RR)}^2\) to obtain
\begin{equation}\label{thh1}
\sup_{t\in[0,T_0]} \int_\RR (\th_x)^2 dx
+ \int_0^{T_0} \int_\RR (\th_{xx})^2 dx dt 
\le 2\Etil_0,
\end{equation}
for \(T_0>0\) of which smallness depends only on \(\Etil_0\) and \(\ell_0\). 
Notice that when we control \(\norm{\util_{xx}}_{L^2(\RR)}\), we do not lose the lower bound on \(T_0\) because we have an \(L^2\)-in-time control on \(\norm{\util_{xx}}_{L^2(\RR)}\) and thus we have an extra \(\sqrt{T_0}\).

Further, the higher-order derivatives can be treated as follows: for \(l\in\mathbb{Z}\) with \(2\le l \le k\),
\begin{multline*}
\rd_t \Big(\frac{R}{2(\g-1)}\int_\RR (\rd_x^l \th)^2 dx\Big)
=-\k\int_\RR \frac{(\rd_x^{l+1} \th)^2}{\vtil} dx
-\k\int_\RR (\rd_x^{l+1} \th) \Big[\rd_x^l, \frac{1}{\vtil}\Big] \rd_x \th dx\\
+R\int_\RR (\rd_x^{l+1} \th) \rd_x^{l-1} \Big(\frac{\thtil}{\vtil}\util_x\Big)dx
-\m\int_\RR (\rd_x^{l+1} \th) \rd_x^{l-1} \Big(\frac{(\util_x)^2}{\vtil}\Big) dx.
\end{multline*}
It follows from Kato--Ponce commutator estimate \eqref{kpce} and the product estimate that
\begin{align*}
\rd_t \Big(\frac{R}{2(\g-1)}\int_\RR (\rd_x^l \th)^2 dx\Big)
&\le -\frac{\k}{2}\int_\RR \frac{(\rd_x^{l+1} \th)^2}{\vtil} dx
+C\norm{\vtil}_{L^\infty(\RR)} 
\|\rd_x \frac{1}{\vtil}\|_{L^\infty(\RR)}^2 \|\rd_x^l \th\|_{L^2(\RR)}^2\\
&\qquad\qquad
+C\norm{\vtil}_{L^\infty(\RR)} 
\|\rd_x^l \frac{1}{\vtil}\|_{L^2(\RR)}^2 \|\rd_x \th\|_{L^\infty(\RR)}^2
+M(\Etil_0,\ell_0^{-1})\\
&\le -\frac{\k}{2}\int_\RR \frac{(\rd_x^{l+1} \th)^2}{\vtil} dx
+M(\Etil_0,\ell_0^{-1}) \|\rd_x^l \th\|_{L^2(\RR)}^2
+M(\Etil_0,\ell_0^{-1}).
\end{align*}
The \(\util_x\) terms can be easily controlled by the bounds given in \eqref{ass1}, as we have \(L^\infty_x\) bound for the lower order derivatives and \(L^2_x\) bound for the highest order derivative.
This implies
\begin{equation}\label{thhm}
\sup_{t\in[0,T]} \int_\RR (\rd_x^l \th)^2 dx
+ \int_0^T \int_\RR (\rd_x^{l+1} \th)^2 dx dt 
\le 2\Etil_0.
\end{equation}
Therefore, combining \eqref{vl2}, \eqref{ul2}, \eqref{thl2}, \eqref{vh1}, \eqref{vhm}, \eqref{uh1}, \eqref{uhm}, \eqref{thh1}, \eqref{thhm}, we recover the first and second induction hypotheses \eqref{indhyp-1} and \eqref{indhyp-2} (or \eqref{ass1} and \eqref{ass2}).

\vspace{2mm}
It remains to verify the third induction hypothesis, namely, the lower bound for \(v\) and \(\th\).
To this end, we proceed as follows: since \(\norm{\rd_x f}_{H^{-1}(\RR)} \le \norm{f}_{L^2(\RR)}\), \eqref{iteration} yields
\[
\norm{v_t}_{H^{-1}(\RR)}
\le \norm{\util}_{L^2(\RR)}
+ M(\ell_0^{-1}) \norm{v_x}_{L^2(\RR)}
\le M(\Etil_0,\ell_0^{-1}).
\]
Then, we have 
\[
\norm{v(t,\cdot)-v_0(\cdot)}_{H^{-1}(\RR)}
\le \int_0^t \norm{v_t(s,\cdot)}_{H^{-1}(\RR)} ds
\le M(\Etil_0,\ell_0^{-1}) T_0.
\]
This together with the following basic functional inequality 
\[
\norm{f}_{L^2(\RR)}
\le C \norm{f}_{H^{-1}(\RR)}^{1/2} \norm{f}_{H^1(\RR)}^{1/2}
\]
implies that 
\[
\norm{v(t,\cdot)-v_0(\cdot)}_{L^2(\RR)}
\le C \norm{v(t,\cdot)-v_0(\cdot)}_{H^{-1}(\RR)}^{1/2}
\norm{v(t,\cdot)-v_0(\cdot)}_{H^1(\RR)}^{1/2}
\le M(\Etil_0,\ell_0^{-1}) T_0^{1/2}.
\]
Now it follows that 
\[
\norm{v(t,\cdot)-v_0(\cdot)}_{L^\infty(\RR)}
\le C\norm{v(t,\cdot)-v_0(\cdot)}_{L^2(\RR)}^{1/2}
\norm{(v(t,\cdot)-v_0(\cdot))_x}_{L^2(\RR)}^{1/2}
\le M(\Etil_0,\ell_0^{-1}) T_0^{1/4}.
\]
This shows that for \(T_0\) of which smallness depends only on \(\Etil_0\) and \(\ell_0\), 
\[
\inf_{[0,T_0]\times\RR} v(t,x) \ge \frac{\ell_0}{2}.
\]
Moreover, since we have 
\begin{align*}
\norm{\th_t}_{H^{-1}(\RR)}
&\le\k \|\frac{\th_x}{\vtil}\|_{L^2(\RR)}
+R \|\frac{\thtil}{\vtil}\util_x\|_{L^2(\RR)}
+\m \|\frac{(\util_x)^2}{\vtil}\|_{L^2(\RR)} \\
&\le M(\Etil_0,\ell_0^{-1}) + M(\Etil_0,\ell_0^{-1}) \norm{\util_{xx}}_{L^2(\RR)}
\end{align*}
and \(\norm{\util_{xx}}_{L^2(\RR)}^2\) is time-integrable, we obtain 
\[
\norm{\th(t,\cdot)-\th_0(\cdot)}_{H^{-1}(\RR)}
\le \int_0^t \norm{\th_t(s,\cdot)}_{H^{-1}(\RR)} ds
\le M(\Etil_0,\ell_0^{-1}) T_0 + M(\Etil_0,\ell_0^{-1}) T_0^{1/2}.
\]
Then, a similar argument yields the desired conclusion: for \(T_0\) of which smallness depends only on \(\Etil_0\) and \(\ell_0\),
\[
\inf_{[0,T_0]\times\RR} \th(t,x) \ge \frac{\ell_0}{2}.
\]
This completes the proof of the inductive step.

\step{4}{(Convergence)}
In what follows, we show that the iterative sequence is Cauchy in 
\begin{equation}\label{spfc}
\big(L^\infty(0,T;L^2)\cap L^2(0,T;H^1(\RR))\big)^3.
\end{equation}
To this end, we introduce, for each \(n\in \mathbb{N}\), the difference functions
\[
\vbar^n \coloneqq v^n -v^{n-1}, \qquad
\ubar^n \coloneqq u^n -u^{n-1}, \qquad
\thbar^n \coloneqq \th^n-\th^{n-1},
\]
whose time evolutions are given by the following equations:
\begin{equation}\label{vdeq}
\rd_t \vbar^n
=\rd_x \ubar^{n-1}
+\t\Big(\frac{(\vbar^n)_x}{v^{n-1}}\Big)_x
+\t\Big(\big(\frac{1}{v^{n-1}}-\frac{1}{v^{n-2}}\big)(v^{n-1})_x\Big)_x,
\end{equation}
\begin{equation}\begin{aligned}\label{udeq}
\rd_t \ubar^n
&=-R \Big(\th^{n-1}\big(\frac{1}{v^{n-1}}-\frac{1}{v^{n-2}}\big)\Big)_x
-R\Big(\frac{1}{v^{n-2}}(\th^{n-1}-\th^{n-2})\Big)_x\\
&\qquad
+\m\Big(\frac{(\ubar^n)_x}{v^{n-1}}\Big)_x
+\m\Big(\big(\frac{1}{v^{n-1}}-\frac{1}{v^{n-2}}\big)(u^{n-1})_x\Big)_x,
\end{aligned}\end{equation}
and
\begin{equation}\begin{aligned}\label{thdeq}
\frac{R}{\g-1} \rd_t \thbar^n
&=-R\Big(\frac{1}{v^{n-1}}-\frac{1}{v^{n-2}}\Big)\th^{n-1}(u^{n-1})_x
-\frac{R\thbar^{n-1}}{v^{n-2}} (u^{n-1})_x
-\frac{R\th^{n-2}}{v^{n-2}} (\ubar^{n-1})_x\\
&\qquad
+\m\Big(\frac{1}{v^{n-1}}-\frac{1}{v^{n-2}}\Big) ((u^{n-1})_x)^2
+\frac{\m}{v^{n-2}}(u^{n-1}+u^{n-2})_x (\ubar^{n-1})_x\\
&\qquad
+\k\Big(\frac{(\thbar^n)_x}{v^{n-1}}\Big)_x
+\k\Big(\big(\frac{1}{v^{n-1}}-\frac{1}{v^{n-2}}\big)(\th^{n-1})_x\Big)_x.
\end{aligned}\end{equation}
We remark that the notation \(\vbar^n\) for the difference should not be confused with the truncated variable \(\vbar^N\) introduced in Subsection \ref{subsec:DeGiorgi}.
Using these equations, together with the estimates obtained in the previous step, we carry out a standard \(L^2\) energy method as follows:
\begin{align*}
&\rd_t \Big(\frac{1}{2}\int_\RR (\vbar^n)^2 dx\Big)\\
&=-\int_\RR (\vbar^n)_x \ubar^{n-1} dx
-\t \int_\RR \frac{((\vbar^n)_x)^2}{v^{n-1}} dx
-\t \int_\RR (\vbar^n)_x \Big(\frac{1}{v^{n-1}}-\frac{1}{v^{n-2}}\Big) (v^{n-1})_x dx\\
&=-\frac{\t}{2} \int_\RR \frac{((\vbar^n)_x)^2}{v^{n-1}} dx
+C\int_\RR v^{n-1} (\ubar^{n-1})^2 dx
+C\int_\RR \frac{(\vbar^{n-1})^2}{v^{n-1}(v^{n-2})^2} ((v^{n-1})_x)^2 dx.\\
\end{align*}
Thanks to the uniform estimates in the previous step, we obtain
\begin{equation}\begin{aligned}\label{vsum}
&\rd_t \Big(\frac{1}{2}\int_\RR (\vbar^n)^2 dx\Big) + \frac{\t}{2} \int_\RR \frac{((\vbar^n)_x)^2}{v^{n-1}} dx\\
&\le M(\Etil_0)\int_\RR (\ubar^{n-1})^2 dx
+M(\ell_0^{-1}) \|\vbar^{n-1}\|_{L^2(\RR)} \|(\vbar^{n-1})_x\|_{L^2(\RR)}
\int_\RR ((v^{n-1})_x)^2 dx\\
&\le M(\Etil_0,\ell_0^{-1}) \int_\RR \Big((\vbar^{n-1})^2 + (\ubar^{n-1})^2\Big) dx
+\frac{\t}{8}\int_\RR \frac{((\vbar^{n-1})_x)^2}{v^{n-2}} dx.
\end{aligned}\end{equation}
By a similar argument, using the equation for \(u\), we obtain
\begin{align*}
\rd_t \Big(\frac{1}{2}\int_\RR (\ubar^n)^2 dx\Big)
&= R \int_\RR (\ubar^n)_x \Big(\th^{n-1}\big(\frac{1}{v^{n-1}}-\frac{1}{v^{n-2}}\big)
+\frac{\thbar^{n-1}}{v^{n-2}}\Big) dx
-\m \int_\RR \frac{((\ubar^n)_x)^2}{v^{n-1}} dx\\
&\qquad
-\m \int_\RR (\ubar^n)_x \Big(\frac{1}{v^{n-1}}-\frac{1}{v^{n-2}}\Big) (u^{n-1})_x dx\\
&\le -\frac{\m}{2}\int_\RR \frac{((\ubar^n)_x)^2}{v^{n-1}} dx
+C\int_\RR \frac{(\th^{n-1})^2}{v^{n-1}(v^{n-2})^2} (\vbar^{n-1})^2 dx\\
&\qquad
+C\int_\RR \frac{v^{n-1}}{(v^{n-2})^2} (\thbar^{n-1})^2 dx
+C\int_\RR \frac{(\vbar^{n-1})^2}{v^{n-1}(v^{n-2})^2} ((u^{n-1})_x)^2 dx.
\end{align*}
Then, the previous estimates yield the following:
\begin{multline}\label{usum}
\rd_t \Big(\frac{1}{2}\int_\RR (\ubar^n)^2 dx\Big)
+\frac{\m}{2}\int_\RR \frac{((\ubar^n)_x)^2}{v^{n-1}} dx\\
\le M(\Etil_0,\ell_0^{-1}) \int_\RR \Big((\vbar^{n-1})^2 + (\ubar^{n-1})^2 + (\thbar^{n-1})^2\Big) dx
+ \frac{\t}{16}\int_\RR \frac{((\vbar^{n-1})_x)^2}{v^{n-2}} dx.
\end{multline}
Moreover, the temperature difference \(\thbar^n\) satisfies
\begin{align*}
&\rd_t \Big(\frac{R}{2(\g-1)}\int_\RR (\thbar^n)^2 dx\Big)\\
&= -R\int_\RR \Big(\big(\frac{1}{v^{n-1}}-\frac{1}{v^{n-2}}\big)\th^{n-1}
+\frac{\thbar^{n-1}}{v^{n-2}}\Big) \thbar^n (u^{n-1})_x dx
-R\int_\RR \frac{\th^{n-2}}{v^{n-2}} \thbar^n (\ubar^{n-1})_x dx\\
&\qquad
+\m\int_\RR \Big(\frac{1}{v^{n-1}}-\frac{1}{v^{n-2}}\Big) \thbar^n ((u^{n-1})_x)^2 dx
+\m\int_\RR \frac{(u^{n-1}+u^{n-2})_x}{v^{n-2}} \thbar^n (\ubar^{n-1})_x dx\\
&\qquad
-\k \int_\RR \frac{((\thbar^n)_x)^2}{v^{n-1}} dx
-\k \int_\RR (\thbar^n)_x \Big(\frac{1}{v^{n-1}}-\frac{1}{v^{n-2}}\Big) (\th^{n-1})_x dx.
\end{align*}
This leads us to have the following:
\begin{align*}
&\rd_t \Big(\frac{R}{2(\g-1)}\int_\RR (\thbar^n)^2 dx\Big)
\le
-\frac{3}{4}\k \int_\RR \frac{((\thbar^n)_x)^2}{v^{n-1}} dx
+\frac{\m}{4} \int_\RR \frac{((\ubar^{n-1})_x)^2}{v^{n-2}} dx\\
&+ M(\Etil_0,\ell_0^{-1}) \int_\RR \Big((\vbar^{n-1})^2 + (\thbar^{n-1})^2\Big) dx
+M(\Etil_0,\ell_0^{-1}) \int_\RR (\thbar^n)^2 \Big(((\ubar^{n-1})_x)^2 + ((u^{n-1})_x)^2\Big) dx\\
&+ M(\Etil_0,\ell_0^{-1})\int_\RR (\thbar^n)^2 dx
+ M(\Etil_0,\ell_0^{-1})\int_\RR (\vbar^{n-1})^2 \Big(((\th^{n-1})_x)^2 + ((u^{n-1})_x)^2 \Big) dx.
\end{align*}
Then, using 
\[
\|\thbar^n\|_{L^\infty(\RR)}^2
\le C\|\thbar^n\|_{L^2(\RR)} \|(\thbar^n)_x\|_{L^2(\RR)}
\]
and 
\[
\|\vbar^{n-1}\|_{L^\infty(\RR)}^2
\le C\|\vbar^{n-1}\|_{L^2(\RR)} \|(\vbar^{n-1})_x\|_{L^2(\RR)},
\]
we find that
\begin{equation}\begin{aligned}\label{thsum}
&\rd_t \Big(\frac{R}{2(\g-1)}\int_\RR (\thbar^n)^2 dx\Big)
+\frac{\k}{2} \int_\RR \frac{((\thbar^n)_x)^2}{v^{n-1}} dx\\
&\qquad\le
M(\Etil_0,\ell_0^{-1}) \int_\RR \Big((\vbar^{n-1})^2 + (\thbar^{n-1})^2\Big) dx
+M(\Etil_0,\ell_0^{-1}) \int_\RR (\thbar^n)^2 dx\\
&\qquad\qquad
+\frac{\t}{16}\int_\RR \frac{((\vbar^{n-1})_x)^2}{v^{n-2}} dx
+\frac{\m}{4} \int_\RR \frac{((\ubar^{n-1})_x)^2}{v^{n-2}} dx.
\end{aligned}\end{equation}
Combining \eqref{vsum}, \eqref{usum} and \eqref{thsum}, one deduces the following:
\begin{equation}\begin{aligned}\label{bctr}
&\frac{1}{2}\rd_t \int_\RR (\vbar^n)^2 + (\ubar^n)^2 + \frac{R}{\g-1}(\thbar^n)^2 dx
+ \frac{1}{2} \int_\RR \t\frac{((\vbar^n)_x)^2}{v^{n-1}}
+\m\frac{((\ubar^n)_x)^2}{v^{n-1}}
+\k\frac{((\thbar^n)_x)^2}{v^{n-1}} dx\\
&\qquad\le M(\Etil_0,\ell_0^{-1}) \int_\RR (\vbar^{n-1})^2 + (\ubar^{n-1})^2 + (\thbar^{n-1})^2 dx
+ M(\Etil_0,\ell_0^{-1}) \int_\RR (\thbar^n)^2 dx\\
&\qquad\qquad
+\frac{1}{4}\int_\RR \t\frac{((\vbar^{n-1})_x)^2}{v^{n-2}}+\m\frac{((\ubar^{n-1})_x)^2}{v^{n-2}} dx.
\end{aligned}\end{equation}
This, combined with a standard argument, establishes the contraction on a sufficiently small time interval as follows: there exist \(T_0>0\) which depends only on \(\Etil_0\) and \(\ell_0\), and \(0 < \a<1\) such that for any \(n\in \mathbb{N}\), we have 
\[
X^n(T_0) \le \a X^{n-1}(T_0)
\]
where \(X^n(T_0)\) is defined as
\begin{align*}
X^n(T_0) &\coloneqq 
\sup_{t\in[0,T_0]}\int_\RR (\vbar^n)^2 + (\ubar^n)^2 + \frac{R}{\g-1}(\thbar^n)^2 dx\\
&\qquad\qquad
+\int_0^{T_0} \int_\RR \t\frac{((\vbar^n)_x)^2}{v^{n-1}}
+\m\frac{((\ubar^n)_x)^2}{v^{n-1}}
+\k\frac{((\thbar^n)_x)^2}{v^{n-1}} dx dt.
\end{align*}
This shows that the sequence \(\{(v^n,u^n,\th^n)\}\) is Cauchy in the desired space in \eqref{spfc}.

\vspace{2mm}
We now justify convergence, from which the existence of a solution follows immediately.
As shown above, the sequence \(\{(v^n,u^n,\th^n)\}\) is uniformly bounded in 
\begin{equation}\label{sp1}
\big(L^\infty(0,T_0;H^k(\RR)) \cap L^2(0,T_0;H^{k+1}(\RR))\big)^3
\end{equation}
and Cauchy in
\begin{equation}\label{sp2}
\big(L^\infty(0,T_0;L^2(\RR)) \cap L^2(0,T_0;H^1(\RR))\big)^3.
\end{equation}
Since the space \eqref{sp2} is a Banach space, the Cauchy property implies that there exists a limit \((v,u,\th)\) such that 
\[
(v^n,u^n,\th^n) \to (v,u,\th)
\text{ strongly in } \big(L^\infty(0,T_0;L^2(\RR)) \cap L^2(0,T_0;H^1(\RR))\big)^3.
\]
On the other hand, by the uniform boundedness in \eqref{sp1}, there exist a subsequence (not relabeled) and a triple \((v_*,u_*,\th_*)\) such that
\begin{align*}
(v^n,u^n,\th^n) &\rightharpoonup^* (v_*,u_*,\th_*)
\qquad \text{in } \big(L^\infty(0,T_0;H^k(\RR))\big)^3,\\
(v^n,u^n,\th^n) &\rightharpoonup (v_*,u_*,\th_*)
\qquad \text{in } \big(L^2(0,T_0;H^{k+1}(\RR))\big)^3.
\end{align*}
Since both the weak and weak-\(*\) convergence in the higher-order space \eqref{sp1} and the strong convergence in the lower-order space \eqref{sp2} imply convergence in the sense of distributions, the limits must coincide, and hence 
\[
(v,u,\th)
=(v_*,u_*,\th_*)
\in \big(L^\infty(0,T_0;H^k(\RR)) \cap L^2(0,T_0;H^{k+1}(\RR))\big)^3.
\]
Thus, the limit inherits the full regularity of the approximating sequence.
Moreover, the strong convergence in \(L^2(0,T_0;L^2(\RR))\) implies, up to a subsequence, pointwise almost everywhere convergence.
By the Aubin--Lions lemma, we further obtain 
\[
(v^n, u^n, \th^n) \to (v,u,\th)
\text{ in } \big(C([0,T_0];H_{loc}^s(\RR))\big)^3
\]
for any \(s\in(\frac{1}{2},1)\).
In particular, the limit \((v,u,\th)\) admits continuous representatives.
This is sufficient to pass to the limit in the nonlinear terms, so that \((v,u,\th)\) is a solution to \eqref{bnsfsys}.
Finally, combining the almost everywhere convergence with the continuity and the uniform positivity of the approximating sequence, we conclude that 
\[
v(t,x), \th(t,x) \ge \frac{\ell_0}{2}, \qquad \forall (t,x) \in [0,T_0] \times \RR.
\]

\step{6}{(Uniqueness)}
Let \((v_1,u_1,\th_1)\) and \((v_2,u_2,\th_2)\) be two solutions.
Repeating the same estimates in \textit{Step 4} for the differences \((v_1-v_2,u_1-u_2,\th_1-\th_2)\), we obtain the uniqueness.
This completes the proof of the local well-posedness. \qed

\end{appendix}

\subsection*{Conflict of interest}
The authors declare no conflict of interest.

\subsection*{Data availability statement}
The authors do not analyze or generate any datasets, as this work is purely theoretical and mathematical.

\bibliography{reference}

\end{document}

%% file: newmacros.tex
\usepackage{amssymb,mathrsfs,mathtools,graphicx,enumerate,color,colortbl}
\usepackage{hyperref}
\usepackage{comment}

\newcommand{\Ccal}{\mathcal{C}}

\newcommand{\Ecal}{\mathcal{E}}

\newcommand{\Mbar}{\bar{M}}

\newcommand{\Ubar}{\bar{U}}

\renewcommand{\hbar}{\bar{h}}

\newcommand{\ubar}{\bar{u}}
\newcommand{\vbar}{\bar{v}}
\newcommand{\wbar}{\bar{w}}

\newcommand{\thbar}{\bar{\th}}

\newcommand{\util}{\tilde{u}}
\newcommand{\vtil}{\tilde{v}}

\newcommand{\thtil}{\tilde{\th}}

\newcommand{\Etil}{\overline{\Ecal}}

\renewcommand{\a}{\alpha}
\renewcommand{\b}{\beta}
\newcommand{\g}{\gamma}
\renewcommand{\d}{\delta}

\newcommand{\et}{\eta}
\renewcommand{\th}{\theta}
\renewcommand{\k}{\kappa}

\newcommand{\m}{\mu}

\renewcommand{\r}{\rho}

\renewcommand{\t}{\tau}

\newcommand{\RR}{{\mathbb R}}

\newcommand{\abs}[1]{\left|#1\right|}

\newcommand{\norm}[1]{\left\|#1\right\|}

\newcommand{\one}[1]{\mathbf{1}_{\{#1\}}}

\newcommand{\step}[2]{\vskip0.2cm \noindent{\it Step {#1}: {#2}}}

\definecolor{black}{rgb}{0.0, 0.0, 0.0}
\definecolor{red}{rgb}{1.0, 0.5, 0.5}

\newcommand{\red}[1]{\textcolor{red}{#1}}

\newcommand{\rd}{\partial}